\documentclass[]{article}
\usepackage[
textheight=8.9in,
textwidth=5.5in,
top=1in,
headheight=12pt,
headsep=25pt,
footskip=30pt
]{geometry}
\setlength{\parindent}{0cm}
\setlength{\parskip}{5.5pt}

\usepackage[utf8]{inputenc}
\usepackage[T1]{fontenc}
\usepackage{hyperref}
\usepackage{xr-hyper}
\usepackage{url}
\usepackage{booktabs}
\usepackage{nicefrac}
\usepackage{microtype}
\usepackage{subcaption}

\usepackage{amsmath, amsthm, amsfonts, amssymb}
\usepackage[dvipsnames]{xcolor}
\usepackage{stackengine}
\usepackage[makeroom]{cancel}
\usepackage{graphicx}
\usepackage{enumitem}

\newcommand{\differential}{{\rm{d}}}
\newcommand{\hess}{\rm{Hess}}
\newcommand\dtheta{\differential\theta}
\newcommand\dr{\differential r}
\newcommand\dmu{\differential \mu}
\newcommand{\dotp}[2]{\left\langle#1, #2\right\rangle}
\newcommand\braketm[2]{\left\langle #1, #2\right\rangle}
\newcommand\braket[2]{\left\langle #1, #2\right\rangle}
\newcommand{\braketd}[2]{\left\langle #1, #2\right\rangle_*}
\newcommand{\braketf}[2]{\left\langle#1,#2\right\rangle_\Fcal}

\newcommand{\Exp}{\mathbb E}
\newcommand{\Ecal}{\mathcal E}
\newcommand{\Fcal}{\mathcal F}
\newcommand{\Vcal}{\mathcal V}
\newcommand{\Pcal}{\mathcal P}
\newcommand{\Kcal}{\mathcal K}
\newcommand{\Mcal}{\mathcal M}
\newcommand{\Mac}{\mathcal{M}_{\rm{ac}}}

\newcommand{\Tcal}{\mathcal T}

\newcommand{\diff}{{\rm{d}}}
\newcommand{\tr}{{\rm{trace}}}
\newcommand{\Rbb}{\mathbb R}
\newcommand{\xtni}{X^{i}_t}

\newcommand{\mutn}{\mu_{t}^{n}}

\newcommand\vect[1]{\left(\begin{array}{c} #1 \end{array} \right)}

\newtheorem{lemma}{Lemma}
\newtheorem{proposition}{Proposition}

\newtheorem{theorem}{Theorem}
\newtheorem*{theorem*}{Theorem}

\newcommand\citen[1]{\cite#1}
\author{%
  Walid Krichene\thanks{Google. \texttt{walidk@google.com}}
\and
  Kenneth F.~Caluya\thanks{Department of Applied Mathematics, University of California, Santa Cruz. \texttt{kcaluya@ucsc.edu, ahalder@ucsc.edu}}
\and
  Abhishek Halder\footnotemark[2] \\
}
\date{}
\title{Global Convergence of Second-order Dynamics in Two-layer Neural Networks}

\begin{document}

\maketitle
\begin{abstract}
Recent results have shown that for two-layer fully connected neural networks, gradient flow converges to a global optimum in the infinite width limit, by making a connection between the mean field dynamics and the Wasserstein gradient flow. These results were derived for first-order gradient flow, and a natural question is whether second-order dynamics, i.e., dynamics with momentum, exhibit a similar guarantee. We show that the answer is positive for the heavy ball method. In this case, the resulting integro-PDE is a nonlinear kinetic Fokker Planck equation, and unlike the first-order case, it has no apparent connection with the Wasserstein gradient flow. Instead, we study the variations of a Lyapunov functional along the solution trajectories to characterize the stationary points and to prove convergence. While our results are asymptotic in the mean field limit, numerical simulations indicate that global convergence may already occur for reasonably small networks.
\end{abstract}

\section{Introduction}
The empirical success of neural network models has prompted several theoretical studies that attempt to shed some light on their performance, and provide guarantees under suitable assumptions. For fully connected networks, universal approximation results such as~\cite{cybenko1989approximation,barron1993universal} provide a partial explanation for this empirical success, by proving that a large enough network can approximate any continuous function on a compact set, though such results do not address the dynamics of learning, i.e., whether local search algorithms such as gradient descent can find global solutions. Recent works~\cite{mei2018mean,chizat2018global} have tackled this question for two-layer networks, and proved convergence to global solutions, by studying the dynamics in the space of distributions over parameters. They make the observation that (Euclidean) gradient flow in the parameter space is equivalent to a Wasserstein gradient flow in the distribution space. This allows for an analysis of the long-time behavior of the dynamics in the mean field limit, i.e., when the width of the network tends to infinity.

To the best of our knowledge, previous works in this setting, such as~\cite{mei2018mean,chizat2018global,sirignano2018mean,rotskoff2018parameters}, have only considered first-order gradient dynamics, and a natural question is whether similar guarantees hold for second-order dynamics, i.e., dynamics with momentum. This is the subject of our investigation. Momentum methods such as the heavy ball method~\cite{polyak1964some}, Nesterov's method~\cite{nesterov1983method}, or the Adam method~\cite{kingma2014adam}, are widely used in practice~\cite{sutskever2013importance} and have received significant attention in the optimization literature. Their continuous-time counterpart is given by a family of second-order differential equations, which can be interpreted as damped nonlinear oscillators~\cite{attouch2000heavy,cabot2009long,goudou2009gradient,gadat2018stochastic}. For example,~\cite{su2014differential,wibisono2016variational} studied the continuous-time limit of Nesterov's method, which is an instance in this family with a particular form of damping.

In this paper, our analysis will focus on the heavy ball method--perhaps the simplest and the earliest instance of second-order optimization dynamics. It corresponds to a constant damping coefficient, making the analysis more tractable. Even in this relatively simple setting, the distribution dynamics for a two-layer neural network is given by a nonlinear kinetic Fokker-Planck equation~\cite{villani2009hypocoercivity}, and unlike the first-order case, there is no apparent connection with the Wasserstein gradient flow. Hence, our approach to analyze the mean field dynamics will be somewhat different, even though the tools we use are similar. Our analysis takes inspiration from previous works in the first-order case~\cite{mei2018mean,chizat2018global}, and also from the study of kinetic Fokker-Planck equations~\cite{bouchut1995long,soler1997asymptotic}.

\subsection{Two-layer neural networks}
\label{sec:network}
We describe the problem setting before summarizing our results. We seek to learn a function $\psi \in \Fcal$, where $\Fcal$ is a Hilbert space equipped with the inner product $\braketf{\cdot}{\cdot}$. The model is parameterized by $(\theta_1, \dots, \theta_n) \in \Theta^n$, and its output is given by
\begin{equation}
\label{eq:network}
\psi_{\theta_1, \dots, \theta_n}(x) := \frac{1}{n}\sum_{i = 1}^n \Psi(\theta_i)(x),
\end{equation}
where $n$ is the number of neurons (also referred to as the width of the network), $x$ is the input vector, and $\Psi(\theta_i) \in \Fcal$. In the two-layer neural network setting, we take $\Theta = \Rbb^{d}$, $\theta_i = (a_i, b_i) \in \Rbb^{d-1} \times \Rbb$, and $\Psi(\theta_i)(x) = b_i s (\dotp{a_i}{x})$, where $a_i, b_i$ are the weights of the first and the second layer, respectively, and $s: \Rbb \mapsto \Rbb$ is an activation function. While~\eqref{eq:network} is perhaps an unusual way to describe the output of a neural network, it highlights a structure that lends itself to mean field analysis: the model can be viewed as an average of ``basis functions'' $\Psi(\theta)(\cdot)$, parameterized by the vector $\theta \in \Theta = \Rbb^d$. This point of view allows us to further rewrite $\psi$ as the integral
\begin{equation}
\label{eq:network_2}
\psi_\mu = \braketm{\Psi}{\mu} := \int_\Theta \Psi(\theta)\:\differential\mu(\theta),
\end{equation}
where $\mu$ is a probability measure on $\Theta$, encoding the parameter distribution.
When $\mu$ is an average of Dirac masses, i.e., $\mu = \frac{1}{n} \sum_{i = 1}^n \delta_{\theta_i}$, the integral \eqref{eq:network_2} reduces to the summation~\eqref{eq:network}.

We are given a convex, Fr\'echet differentiable functional $R: \Fcal \mapsto \Rbb_+$, referred to as the risk functional, which measures the expected loss of the model. For instance, in the quadratic loss case, $R(\psi) = \frac{1}{2}\mathbb E_{(x, y)} |\psi(x) - y|^2 = \frac{1}{2}\braket{\psi - y}{\psi - y}_\Fcal$, where the inner product is taken to be $\braketf{\psi}{\phi} := \mathbb E_{(x, y)} [\psi\phi]$, and the features and labels $(x, y)$ are sampled from a data distribution $D$. We are also given a regularization function $g : \Theta \to \Rbb_+$, and we consider the regularized risk $R(\Psi_{\theta_1, \dots, \theta_n}) + \frac{1}{n} \sum_{i = 1}^n g(\theta_i)$. While this is, in general, a non-convex function of $\theta$, when lifted to the space of probability measures, it becomes
\begin{equation}
\label{eq:F}
F(\mu) := R(\braketm{\Psi}{\mu}) + \braketm{g}{\mu}.
\end{equation}
The functional $F$ is convex and Fr\'echet differentiable. Thus, the learning problem can be recast as a measure-valued convex optimization problem, $\inf_{\mu} F(\mu)$. This is the point of view taken in~\cite{chizat2018global,mei2018mean}, as well as in earlier works such as~\cite{bengio2006convex,bach2017breaking}. We note that while our motivation is the study of neural network dynamics, this setting applies to other problems, see~\cite{bach2017breaking,chizat2018global}.

\subsection{Particle and distribution dynamics}
We give an informal overview of the general strategy used to study the mean field limit.
The first step is to make a connection between the dynamics of the particles $(\theta_1, \dots, \theta_n)\in\Theta^{n}$, and the dynamics of their distribution $\mu$ supported over $\Theta$. Suppose the particles move following a (time-varying) vector field $v_t: \Theta \mapsto \mathbb R^d$, where $t$ indexes time. That is, the trajectories are solutions to the following ordinary differential equation (ODE):
\begin{equation}
\label{eq:ODE}
\dot \theta_t = v_t(\theta_t).
\end{equation}
Then at time $t$, the distribution $\mu_t$ of these particles  -- more precisely, the push-forward of an initial distribution~$\mu_0$ by the flow of the ODE~\eqref{eq:ODE} -- is given by the solution to a partial differential equation (PDE) known as the continuity equation:
\begin{equation}
\label{eq:PDE}
\partial_t \mu_t + \nabla \cdot (\mu_t v_t) = 0,
\end{equation}
where $\nabla \cdot$ stands for the divergence operator. When $\mu_t$ does not have a density,~\eqref{eq:PDE} should be interpreted distributionally.
Let $\mu^n_t$ be the solution of~\eqref{eq:PDE} initialized at $\frac{1}{n} \sum_{i = 1}^n \delta_{\theta_i}$. One can show that if the initial positions $(\theta_1, \dots, \theta_n)$ are drawn from a fixed distribution $\mu_0$, then as the number of particles $n$ tends to infinity, the solutions $\mu^n_{t}$, weakly converge to the solution $\mu_t$ of~\eqref{eq:PDE} initialized at~$\mu_0$. One can then focus on studying the dynamics of the mean field limit $\mu_t$.

The vector field $v_t$ in~\eqref{eq:ODE} and~\eqref{eq:PDE}, which describes the movement of the particles, is determined by the particular learning dynamics under consideration. The choice of $v_t$ in~\cite{mei2018mean,chizat2018global,sirignano2018mean,rotskoff2018parameters} corresponds to the first-order gradient flow in $\theta$. In this case, the PDE~\eqref{eq:PDE} has additional structure: it corresponds to a Wasserstein gradient flow of the functional~$F$ in \eqref{eq:F}, as is well known in the optimal transportation literature~\cite{jordan1998variational,ambrosio2008gradient}. We study a different choice of $v_t$ that corresponds to the heavy ball method. Our primary goal is to provide, under suitable assumptions, convergence guarantees to the global minimizers of $F$, which is similar in spirit with~\cite{chizat2018global,mei2018mean}. The latter two results differ: in~\cite{chizat2018global}, the authors study \emph{deterministic} gradient flow under a homogeneity assumption on the objective, and prove that \emph{assuming $\mu_t$ converges}, it can only converge to a global minimum. In~\cite{mei2018mean}, the authors study \emph{noisy} gradient flow for the quadratic loss, and prove that $\mu_t$ converges arbitrarily close to the global minimum (depending on the magnitude of the noise). Our approach is closer to the latter: we study convergence of the \emph{noisy} heavy ball method. The convergence of the noiseless second-order dynamics in the mean field limit remains an open question.

\subsection{Summary of contributions}
We start by deriving the distributional PDE associated with the heavy ball method (Section~\ref{sec:second_order}). To study the dynamics in the mean field limit, we define a Lyapunov functional in Section~\ref{sec:LyapunovFunctional} and bound its variations along solution trajectories. This relies on a general criterion given in Lemma~\ref{lem:variation}, which reveals a close connection between the Lyapunov functions for dynamics with no particle interaction (as in convex optimization) and the Lyapunov functionals for mean field dynamics.

Equipped with this result, in Section \ref{sec:StationarySolution}, we characterize the stationary solutions in Theorem~\ref{thm:decomposition}, and show that they must satisfy a Boltzmann fixed point equation, for which we prove the existence and uniqueness of a solution in Proposition~\ref{prop:boltzmann}. Furthermore, we show in Theorem~\ref{thm:stationarymeasure} that the solution trajectory converges to this unique stationary point. Finally, we show in Theorem~\ref{thm:minimum} that by using vanishingly small noise, the limit can be made arbitrarily close to the global infimum of $F$.

In Section \ref{sec:Numerics}, we illustrate these results with numerical experiments that include other variants of second-order dynamics beyond the heavy ball method. The experiments suggest that the convergence may already occur with a reasonably small number of particles.

The proofs are deferred to the appendix.

\paragraph{Notation}

We denote the parameter space by $\Theta = \Rbb^d$, and its tangent bundle by $\Tcal\Theta = \Rbb^d \times \Rbb^d$. We use $\Mcal\left(\mathcal{T}\Theta\right)$ to denote the space of probability measures on $\mathcal{T}\Theta$, and $\Mac\left(\mathcal{T}\Theta\right) \subset \Mcal\left(\mathcal{T}\Theta\right)$ to denote the space of probability measures that are absolutely continuous w.r.t. the Lebesgue measure. We identify $\mu\in\Mac(\mathcal{T}\Theta)$ with its density $\rho$ using the relation $\differential\mu = \rho(\theta,r) \differential\theta\differential r$, and denote the space of corresponding density functions as $\Pcal\left(\mathcal{T}\Theta\right)$. When there is no potential confusion, we ease the notation by writing $\Mcal,\Pcal$ in lieu of $\Mcal\left(\mathcal{T}\Theta\right),\Pcal\left(\mathcal{T}\Theta\right)$.
We use the symbol $\langle\cdot,\cdot\rangle$ for inner products. When the arguments of $\langle\cdot,\cdot\rangle$ are finite dimensional vectors, it denotes the Euclidean inner product, and when the arguments are scalar-valued functions, it denotes the integral of the product of these functions w.r.t. the Lebesgue measure. Given two vector fields $u, v$ on $\Tcal\Theta$, we use $\braket{u}{v}_*$ to denote the integral $\int_{\mathcal{T}\Theta}\dotp{u(\theta,r)}{v(\theta,r)}\differential\theta\differential r$.
We use $|\cdot|$, $\|\cdot\|_\Fcal$ and $\|\cdot\|_p$ to denote the Euclidean norm on $\Rbb^d$, the Hilbert norm on $\Fcal$, and the $L^p$ norm, respectively. The Fr\'echet differential of a functional $F: \Fcal \mapsto \Rbb$ is denoted by $F'$, and the (Euclidean) gradient of a function $f : \mathcal{T}\Theta \mapsto \Rbb$ is denoted by $\nabla f$.

\section{Mean field second-order dynamics}
\label{sec:second_order}

\subsection{Assumptions}
Let $F_0(\mu) := R(\braket{\Psi}{\mu})$ denote the unregularized loss, and $F(\mu) = F_0(\mu) + \braket{g}{\mu}$. We make the following assumptions:
\begin{enumerate}[label=(A\arabic*)]
\item\label{A:R_diff} $R: \Fcal \mapsto \Rbb_+$ is convex, Fr\'echet differentiable.
\item\label{A:psi_diff} $\Psi : \Theta \mapsto \Fcal$ is Fr\'echet differentiable.
\item\label{A:grad_bounded} $\nabla F'(\rho) \in L^\infty(\Theta)$ for all $\rho \in \Pcal(\Theta)$.
\item\label{A4} $\{F'_0(\rho) : \rho \in L^{1}(\Theta), \|\rho\|_1 \leq 1\}$ is uniformly equicontinuous and uniformly bounded, and $g: \Theta \to \Rbb_+$ is differentiable and confining, i.e. $\lim_{|\theta| \to \infty} g(\theta) = \infty$ and $\exp (-\beta g)$ is integrable for all $\beta > 0$.
\end{enumerate}
We discuss some of the implications of these assumptions. \ref{A:R_diff} and \ref{A:psi_diff} are basic regularity assumptions implying that $F$ is Fr\'echet differentiable and $F'(\rho) : \Theta \to \Rbb$ is a differentiable function of $\theta$, so the gradient of the loss in the parameter space is well-defined. \ref{A:grad_bounded} is used to prove the existence and uniqueness of solutions of the PDE. The assumption that the loss is regularized by $g$, together with the condition \ref{A4}, are important to guarantee existence of a stationary solution, as is common in the literature~\cite{bouchut1995long,pavliotis2014stochastic,mei2018mean}. In particular, the assumption that the regularizer $g$ is confining is rather mild; it essentially requires $g(\theta)$ to grow sufficiently fast when $|\theta|$ tends to infinity. One simple choice is to take $g(\theta) = |\theta|$ (except near $0$, since we also require differentiability of $g$). In Appendix~\ref{app:quadratic}, we explicate the foregoing assumptions in the context of the quadratic loss, and show that they are implied by the assumptions made in previous work.

\subsection{Second-order dynamics}
Given a differentiable $f: \Rbb^d \mapsto \Rbb$, a broad family of second-order dynamics is described by the~ODE
\begin{equation}
\label{eq:second_order_ode}
\ddot \theta_t = -\nabla_\theta f(\theta) - \gamma_t \dot\theta_t,
\end{equation}
which can be interpreted as a dissipative nonlinear oscillator with potential $f$, and damping coefficient~$\gamma_t$, see~\cite{haraux1991systemes,attouch2000heavy,cabot2009long,su2014differential}. Under certain assumptions, such as convexity of $f$, it can be shown that the solutions converge to global minimizers of $f$, see~\cite{cabot2009long}. When $\gamma_t \equiv \gamma/t$ for some positive constant $\gamma$, this corresponds to Nesterov's method in continuous-time~\cite{su2014differential}, and when $\gamma_t \equiv \gamma$ is a time-independent positive constant, it corresponds to the heavy ball method~\cite{attouch2000heavy,gadat2018stochastic}.

As will become clear shortly, the potential $f$ in our setting is time-varying due to the interaction between particles. Recall from~\eqref{eq:F} that the objective functional is $F(\mu)=R(\braketm{\Psi}{\mu}) + \braket{g}{\mu}$, where $\mu$ is a distribution over parameters $\theta\in\Theta$. The gradient of the objective in the parameter space $\Theta$ is given by $\nabla F'(\mu)(\theta)$, see Appendix~\ref{app:gradient} for a detailed derivation. In particular, setting $v_t(\theta) \equiv - \nabla F'(\mu_t)(\theta)$ in ~\eqref{eq:ODE}-\eqref{eq:PDE} corresponds to the first-order gradient flow, as in~\cite{chizat2018global,mei2018mean}.

In the second-order case, it is convenient to write~\eqref{eq:second_order_ode} as a system of two first-order equations describing the evolution of position-velocity pair $(\theta,r)\in\mathcal{T}\Theta$ (the tangent bundle). Then $\mu_t \in \Mcal(\Tcal\Theta)$ denotes the joint distribution over $\Tcal\Theta$ at time $t$, and $[\mu_t]^\theta$ is the corresponding marginal measure over $\Theta$. We suppose that a Brownian motion is applied to the velocity (or rate) $r$, resulting in the following It\^{o} stochastic differential equation (SDE):
\begin{equation}
\label{eq:second_order_particle}
\differential \vect{\theta \\ r}  = \vect{r \\ - \nabla F'([\mu_t]^\theta)(\theta) - \gamma r}\differential t + \vect{0 \\ \sqrt{2\gamma\beta^{-1}}\:\differential W_t},
\end{equation}
where the parameter $\beta > 0$ is referred to as the inverse temperature, $\gamma > 0$ is the constant damping coefficient, and $W_t$ is the standard Wiener process in the tangent space of $\Theta$. Eq.~\eqref{eq:second_order_particle} is an underdamped Langevin equation with interaction potential $F'([\mu_t]^\theta)$. It describes the stochastic heavy ball method~\cite{gadat2018stochastic} in the parameter space. The dependence of the potential on $\mu_t$ reflects the fact that the output of the neural network (and its loss) depend not on a single particle, but on the distribution of all particles. Note that the dependence is on the marginal $[\mu_t]^\theta$, since the loss only depends on positions, and not velocities.

The distribution dynamics corresponding to~\eqref{eq:second_order_particle} is given by
\begin{equation}
\label{eq:PDE_second_order}
\partial_t \mu_t = -\nabla . \left[\mu_t . \vect{r \\ -\nabla F'([\mu_t]^\theta) - \gamma r} \right] + \gamma\beta^{-1}\Delta_r \mu_t,
\end{equation}
where $\Delta_r$ denotes the Laplacian operator w.r.t. the $r$ variable, and corresponds to the Brownian motion applied to $r$. The integro-PDE~\eqref{eq:PDE_second_order} is a nonlinear kinetic Fokker-Planck equation.

\paragraph{Consistency of the mean field limit}
We now provide a consistency result between the second-order particle dynamics and the mean field limit. 
\begin{theorem}
\label{thm:consistency}
Let $\mu_{0}\in\Mcal(\Tcal\Theta)$. Consider a set of $n$ interacting particles with states $\left\{ (\theta^i_t,r^i_t)\right\}_{i=1}^{n}$ where $i$ denotes the $i$-th particle. Suppose these particles solve copies of the SDE~\eqref{eq:second_order_particle} indexed by $i=1,\hdots,n$, in which $\mu_t$ is replaced by the empirical distribution $\mu_{t}^{n}:=\frac{1}{n}  \sum_{i=1}^{n} \delta_{(\theta^i_t,r^i_t)}$, and with initial states $\{(\theta^i_0,r^i_0)\}_{i = 1}^n$ sampled independently from $\mu_0$. Then there exists $\mu_t\in\Mcal(\Tcal\Theta)$ such that, almost surely, $\mu_t^n \to \mu_t$ weakly, as $n \to \infty$. Furthermore, $\mu_t$ solves~\eqref{eq:PDE_second_order} in the weak distributional sense with initial condition $\mu_0$.
\end{theorem}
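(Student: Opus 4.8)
The plan is to read Theorem~\ref{thm:consistency} as a standard propagation-of-chaos statement for a McKean--Vlasov system and to prove it in two stages: first establish well-posedness of the \emph{nonlinear} mean field SDE whose time-marginals will be the claimed limit $\mu_t$, then couple the $n$-particle system to $n$ independent copies of that SDE to get quantitative convergence, and finally identify the marginal law with the weak solution of~\eqref{eq:PDE_second_order} via It\^o's formula.

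For the first stage I would work on probability measures over path space $C([0,T];\Tcal\Theta)$ with the quadratic Wasserstein distance $W_2$, and consider the map $\Phi$ sending a candidate flow $(m_t)_{t\le T}$ to the law of the solution of~\eqref{eq:second_order_particle} with $\mu_t$ frozen to $m_t$. The drift $b(\theta,r,m)=(r,\,-\nabla F'([m]^\theta)(\theta)-\gamma r)$ is affine, hence globally Lipschitz in $(\theta,r)$; the only measure dependence is in $\nabla F'([m]^\theta)(\theta)$, which is bounded by~\ref{A:grad_bounded} and, I expect, Lipschitz in $m$ with respect to a Wasserstein distance on the $\theta$-marginal --- this is where I would use~\ref{A:R_diff}, \ref{A:psi_diff} and the equicontinuity/boundedness in~\ref{A4} (possibly strengthening it slightly to a genuine Lipschitz bound on $F_0'$). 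Since the diffusion coefficient is constant, a Picard iteration together with Gr\"onwall's inequality makes $\Phi$ a contraction on $[0,T_0]$ for $T_0$ small, affine growth of $b$ precludes explosion, and concatenation extends this to arbitrary $T$; the unique fixed point has time-marginals $\mu_t\in\Mcal(\Tcal\Theta)$, and standard estimates give $\sup_{t\le T}\Exp(|\theta_t|^2+|r_t|^2)<\infty$.

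For the second stage I would realize the $n$-particle system and $n$ independent copies $(\bar\theta^i_t,\bar r^i_t)$ of the nonlinear SDE on the same space, driven by the same Brownian motions $W^i$ and started from the same samples $(\theta^i_0,r^i_0)$. With $\bar\mu^n_t:=\frac1n\sum_i\delta_{(\bar\theta^i_t,\bar r^i_t)}$, one controls $e_n(t):=\max_i\Exp\sup_{s\le t}|(\theta^i_s,r^i_s)-(\bar\theta^i_s,\bar r^i_s)|^2$: subtracting the two SDEs and using the Lipschitz bounds, the drift difference is dominated by $e_n(s)$ plus $\Exp W_2(\mu^n_s,\bar\mu^n_s)^2$ plus $\Exp W_2(\bar\mu^n_s,\mu_s)^2$; the first is again bounded by $e_n(s)$, and the second tends to $0$ uniformly on $[0,T]$ by the quantitative law of large numbers for empirical measures of i.i.d. samples with finite moments (Fournier--Guillin). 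Gr\"onwall then yields $\sup_{t\le T}e_n(t)\to0$, hence $\Exp W_2(\mu^n_t,\mu_t)^2\to0$. To upgrade to the almost-sure weak convergence in the statement, I would combine this with $\bar\mu^n_t\to\mu_t$ weakly almost surely (Varadarajan's theorem for the i.i.d. copies) and a Borel--Cantelli argument along a rate that is summable in $n$ (or pass to a.s. convergence of the coupling error along a subsequence), and invoke a countable determining class of test functions so the conclusion holds simultaneously for all of them.

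Finally, to show $\mu_t$ solves~\eqref{eq:PDE_second_order} weakly, I would apply It\^o's formula to $\varphi(\theta_t,r_t)$ for $\varphi\in C_c^\infty(\Tcal\Theta)$ along the nonlinear SDE, take expectations to kill the martingale term, and obtain $\frac{\differential}{\differential t}\braket{\varphi}{\mu_t}=\braket{\Lcal_{\mu_t}\varphi}{\mu_t}$, where $\Lcal_m\varphi:=r\cdot\nabla_\theta\varphi-(\nabla F'([m]^\theta)+\gamma r)\cdot\nabla_r\varphi+\gamma\beta^{-1}\Delta_r\varphi$; integrating by parts identifies the right-hand side as the pairing of $\varphi$ with the distribution on the right of~\eqref{eq:PDE_second_order}, which is its weak formulation. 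The main obstacle I anticipate is precisely the regularity of the measure-to-field map $m\mapsto\nabla F'([m]^\theta)(\cdot)$: extracting a clean Lipschitz estimate in a Wasserstein distance from~\ref{A:R_diff}--\ref{A4} is what makes both the fixed-point argument and the coupling estimate close, whereas the degeneracy of the noise (it acts only on $r$) causes no trouble here since the argument is entirely pathwise --- that degeneracy only becomes delicate in the later long-time/ergodicity analysis.
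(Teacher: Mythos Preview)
Your proposal is sound but follows a genuinely different route from the paper. The paper argues via the \emph{martingale problem}: for a test function $\varphi\in C^2_b$, It\^o's formula gives $\diff\langle\varphi,\mu^n_t\rangle=\langle L_{\mu^n_t}\varphi,\mu^n_t\rangle\,\diff t+\diff M^n_t$, and the quadratic variation of $M^n_t$ is $O(1/n)$ and hence vanishes in the limit by Doob's inequality; tightness of the laws of $(\mu^n_\cdot)$ on $C([0,\infty),\Mcal(\Tcal\Theta))$ is then invoked by appeal to Oelschl\"ager, Skorohod's representation upgrades subsequential weak convergence to almost-sure convergence, and one passes to the limit in the generator to obtain the weak form of~\eqref{eq:PDE_second_order}. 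No Lipschitz dependence of $\nabla F'$ on the measure, or on $\theta$, is needed --- boundedness and continuity suffice for tightness and for identifying the limit.

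Your Sznitman-style synchronous coupling is the other standard route and has the advantage of being quantitative (a rate in $n$ via Fournier--Guillin), but it asks for more: closing the Gr\"onwall loop requires a genuine Lipschitz bound on $m\mapsto\nabla F'([m]^\theta)(\cdot)$ in a Wasserstein metric, which you correctly flag as the main obstacle. One additional point: the drift is not affine in $(\theta,r)$, since $\theta\mapsto\nabla F'([m]^\theta)(\theta)$ is in general nonlinear, so your Picard/contraction step also needs this map to be Lipschitz in $\theta$, and that is not among~\ref{A:R_diff}--\ref{A4} either (only boundedness~\ref{A:grad_bounded} is assumed). Under the paper's hypotheses as stated, the martingale/tightness argument is therefore the natural choice; with the extra Lipschitz regularity you anticipate, your coupling approach goes through and in fact delivers more.
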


This result motivates the study of the long-time behavior of the mean field limit~$\mu_t$. One of the advantages of the mean-field setting is that one can work with absolutely continuous distributions, which simplifies the analysis.

\paragraph{Existence and uniqueness of solutions}
We make the following assumption on the initial condition $\mu_0$, both to obtain existence and uniqueness of a solution, and to guarantee finiteness of the free energy, introduced in the next section.
\begin{enumerate}[label=(A\arabic*)]\setcounter{enumi}{4}
\item\label{A:init} $\mu_{0}$ is absolutely continuous, and the associated PDF $\rho_{0}$ satisfies $\langle g(\theta) + |r|^{2}/2,\rho_{0}\rangle < \infty$, $\langle\log^{+}\rho_{0},\rho_{0}\rangle < \infty$, and $\int |\nabla F_0^{\prime}([\rho_0]^{\theta})(\theta)|^2 \differential\theta < \infty$,
\end{enumerate}
where $\log^{+}\rho:= \max\{\log\rho,0\}$. It is known (see~\cite{victory1990classical,bouchut1995long}) that if $F$ satisfies \ref{A:grad_bounded} and the initial condition $\mu_0$ satisfies \ref{A:init}, then (\ref{eq:PDE_second_order}) admits a unique solution $\mu_{t}\in C\left([0,\infty),\Mcal(\Tcal\Theta)\right)$. That $\mu_{t}$ remains absolutely continuous for all $t\geq 0$, and hence $\rho_{t}$ exists, will be proved in Theorem \ref{thm:stationarymeasure}.
Since the solution is absolutely continuous, $\mu_{t}$ and $[\mu_{t}]^{\theta}$ in \eqref{eq:second_order_particle}-\eqref{eq:PDE_second_order} can be replaced by the corresponding PDFs $\rho_{t}$ and $[\rho_{t}]^{\theta}$, respectively.

\subsection{The linear case}
\label{sec:linear}
When $F(\mu) = \braket{f}{\mu}$ for some function $f: \Theta \mapsto \Rbb$, we have $F'(\mu) = f$ and there is no particle interaction in~\eqref{eq:second_order_particle}. While this situation is irrelevant in the neural network context, it is instructive to review results in the linear setting. In this case,~\eqref{eq:PDE_second_order} becomes a \emph{linear} Fokker-Planck PDE, which can be shown (e.g., Proposition 6.1 in~\cite{pavliotis2014stochastic}) to admit a unique stationary solution with PDF $\rho_\infty(\theta, r) = \exp \left(-\beta\left(f(\theta) + \frac{1}{2}|r|^2 \right)\right)/Z$,
where $Z$ is a normalizing constant. Under additional assumptions on the confining potential $f$, one can also study the rate of convergence of $\rho_t$ to $\rho_{\infty}$, see e.g.~\cite{haraux1991systemes,villani2009hypocoercivity,pavliotis2014stochastic,bakry2008rate}. Our situation corresponds to a \emph{nonlinear} kinetic Fokker-Planck equation, which is not well-understood in the general setting. Some special cases have been studied in the literature, such as when the interaction potential $F'(\mu)$ is a convolution~\cite{bouchut1995long,soler1997asymptotic,carrillo2003kinetic,villani2009hypocoercivity}. The convolution structure in these references is motivated from physical dynamics--the electrostatic Coulomb interaction in plasma and semiconductor dynamics \cite{dolbeault1991stationary,abdallah1995child,degond2000mathematical}, and the gravitational Newton interaction in stellar dynamics \cite{chandrasekhar1943stochastic,padmanabhan1990statistical,batt1993rigorous}--and leads to the Valsov-Poisson-Fokker-Planck equations \cite{victory1990classical,perthame2004mathematical,huang2000variational}. Unfortunately, this is not the case in our neural network setting, and these results do not directly apply. However, we will use similar techniques, and will prove that the stationary solutions have a similar characterization.

\section{Variations of a Lyapunov functional}
\label{sec:LyapunovFunctional}
Hereafter, we work with the PDF trajectory $\rho_t\in C\left([0,\infty),\Pcal(\Tcal\Theta)\right)$ associated with the measure-valued solution trajectory $\mu_{t}\in C\left([0,\infty),\Mac(\Tcal\Theta)\right)$ for (\ref{eq:PDE_second_order}). To characterize the stationary solutions, we will study the variations of the following Lyapunov functional, defined for $\rho \in \Pcal(\Tcal\Theta)$,
\begin{equation}
\label{eq:free_energy}
\Ecal(\rho) := F([\rho]^\theta) + \dotp{\frac{1}{2}|r|^2}{\rho} + \beta^{-1} H(\rho),
\end{equation}
where $H(\rho):=\langle\log\rho,\rho\rangle$ is the negative entropy. The functional $\Ecal$ is often referred to as the \emph{free energy}. In this section, we show that along the trajectory $\rho_t$, the functional $\Ecal$ is non-increasing.

\begin{lemma}
\label{lem:variation}
Let $v_t$ be a vector field over $\Tcal \Theta$, and let $\rho_t$ be a solution of the continuity equation $\partial_t \rho_t = - \nabla . (\rho_t v_t)$ with initial condition $\rho_0\in\Pcal(\Tcal\Theta)$. Let $\Vcal : \Pcal(\Tcal\Theta) \mapsto \Rbb$, and suppose that along $\rho_t$, $\Vcal$ is Fr\'echet differentiable and $\Vcal'(\rho_t): \Theta \mapsto \Rbb$ is differentiable. Then for all $t \geq 0$,
\[
\partial_t \Vcal(\rho_t) = \braketd{\nabla \Vcal'(\rho_t)}{\rho_t v_t}.
\]
\end{lemma}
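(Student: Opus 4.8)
The plan is to differentiate $\Vcal(\rho_t)$ in time using the chain rule for functionals, and then integrate by parts to move the divergence off of $\rho_t v_t$ and onto $\Vcal'(\rho_t)$. Concretely, the Fréchet differentiability of $\Vcal$ along the trajectory means that
\[
\partial_t \Vcal(\rho_t) = \dotp{\Vcal'(\rho_t)}{\partial_t \rho_t},
\]
where the inner product is the $L^2$ pairing over $\Tcal\Theta$. Substituting the continuity equation $\partial_t \rho_t = -\nabla\cdot(\rho_t v_t)$ gives $\partial_t \Vcal(\rho_t) = -\dotp{\Vcal'(\rho_t)}{\nabla\cdot(\rho_t v_t)}$. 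An integration by parts on $\Tcal\Theta = \Rbb^{2d}$ then yields $\dotp{\nabla\Vcal'(\rho_t)}{\rho_t v_t} = \braketd{\nabla\Vcal'(\rho_t)}{\rho_t v_t}$, which is exactly the claimed identity, provided the boundary terms at infinity vanish.

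The first step is to justify the chain rule $\partial_t\Vcal(\rho_t) = \dotp{\Vcal'(\rho_t)}{\partial_t\rho_t}$. This is where the hypothesis that $\Vcal$ is Fréchet differentiable \emph{along $\rho_t$} is used: writing $\Vcal(\rho_{t+h}) - \Vcal(\rho_t) = \dotp{\Vcal'(\rho_t)}{\rho_{t+h}-\rho_t} + o(\|\rho_{t+h}-\rho_t\|)$ and dividing by $h$, one takes $h\to 0$, using that $t\mapsto\rho_t$ is differentiable in the appropriate topology (this is part of what it means for $\rho_t$ to be a solution of the continuity equation, together with assumption~\ref{A:grad_bounded}, which guarantees enough regularity of the solution). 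The second step, substituting the PDE, is immediate. The third step is the integration by parts: since $\Vcal'(\rho_t)$ depends only on $\theta$ (it is a function $\Theta\to\Rbb$, reflecting that $\Vcal$ is evaluated through the $\theta$-marginal structure in our applications), $\nabla\Vcal'(\rho_t)$ should be read as the gradient on $\Tcal\Theta$ that is constant in the $r$-direction where appropriate; in any case the pairing $\braketd{\cdot}{\cdot}$ over $\Tcal\Theta$ is the correct target. One writes $-\int_{\Tcal\Theta}\Vcal'(\rho_t)\,\nabla\cdot(\rho_t v_t) = \int_{\Tcal\Theta}\dotp{\nabla\Vcal'(\rho_t)}{\rho_t v_t} - \int_{\partial(\Tcal\Theta)}\Vcal'(\rho_t)\,\rho_t\dotp{v_t}{n}$.

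The main obstacle is the vanishing of the boundary term. This requires decay of $\rho_t$ (and control of $\Vcal'(\rho_t)$ and $v_t$) as $|(\theta,r)|\to\infty$. In our setting this follows from the confining nature of $g$ in assumption~\ref{A4} together with the finite-free-energy condition~\ref{A:init}: these force $\rho_t$ to have finite second moment in $r$ and finite $\dotp{g}{\rho_t}$, so $\rho_t$ decays fast enough that the flux through large spheres tends to zero, while assumption~\ref{A:grad_bounded} bounds $\nabla F'([\rho_t]^\theta)$ and hence keeps $v_t$ from growing too fast. A clean way to handle this rigorously is to first establish the identity against a smooth compactly supported test function — which is exactly the weak (distributional) formulation of the continuity equation — and then pass to the limit by a truncation/cutoff argument, using the moment bounds to dominate the error. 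I would present the computation at the formal level and relegate the truncation estimates to a remark, since they are standard for kinetic Fokker–Planck equations and rely only on the stated assumptions.
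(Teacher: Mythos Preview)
Your proposal is correct and follows essentially the same route as the paper: apply the chain rule to get $\dotp{\Vcal'(\rho_t)}{\partial_t\rho_t}$, substitute the continuity equation, then integrate by parts (what the paper calls the duality $\braketd{\nabla f}{G} + \braket{f}{\nabla\cdot G} = 0$). The paper's proof is purely formal and does not discuss the boundary term at all, so your truncation remarks go beyond what the paper does; one small slip: $\Vcal'(\rho_t)$ is a function on $\Tcal\Theta$, not just $\Theta$ (the statement has a typo---see the free-energy case where $\Ecal'(\rho)$ depends on both $\theta$ and $r$), so your aside about it being constant in $r$ should be dropped.
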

This gives us a simple criterion for a functional $\Vcal$ to be non-increasing along solution trajectories with vector field $v(\rho)$: it suffices that for all $\rho$, the inequality $\dotp{\nabla \Vcal'(\rho)}{v(\rho)} \leq 0$ holds $\rho$-a.e. In the case with no interaction, i.e., $\Vcal(\rho) \equiv \braket{V}{\rho}$ is linear with $V:\Theta\mapsto\mathbb{R}$, and the vector field $v$ is independent of $\rho$, the condition reduces to $\dotp{\nabla V}{v}\leq 0$, which defines Lyapunov functions for single particle dynamics. With this observation, the free energy $\Ecal$ can be viewed as a mean field generalization of $E(\theta, r) := f(\theta) + \frac{1}{2}|r|^2$, which is known to be a Lyapunov function for the heavy ball dynamics, see Appendix~\ref{app:lyap}.

To apply Lemma~\ref{lem:variation} to the free energy $\Ecal$ in \eqref{eq:free_energy}, we use the identity $\Delta_r \rho = \nabla_r \cdot \nabla_r \rho$ to formally rewrite \eqref{eq:PDE_second_order} as follows.\begin{equation}
\label{eq:noisy_second_order}
\partial_t \rho_t = -\nabla. (\rho_t v(\rho_t)),
\quad
v(\rho)(\theta, r) := \vect{r \\ - \nabla F'([\rho]^\theta)(\theta) - \gamma r - \gamma\beta^{-1}\nabla_r \log \rho(\theta, r)}.
\end{equation}
\begin{proposition}
\label{prop:lyap_decreasing}
Consider the Lyapunov functional $\mathcal{E}$ in (\ref{eq:free_energy}). Let $\rho_t$ be the solution to~\eqref{eq:noisy_second_order}. Then
\[
\partial_t \Ecal(\rho_t) = -\gamma \braket{|r + \beta^{-1}\nabla_r \log \rho_t|^2 }{\rho_t}.
\]
\end{proposition}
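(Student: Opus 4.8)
The plan is to apply Lemma~\ref{lem:variation} with $\Vcal = \Ecal$ and $v_t = v(\rho_t)$ the vector field appearing in the continuity-equation form~\eqref{eq:noisy_second_order} of the PDE, then simplify the resulting pairing. First I would compute the Fr\'echet differential of the three pieces of $\Ecal(\rho) = F([\rho]^\theta) + \dotp{\tfrac12|r|^2}{\rho} + \beta^{-1}H(\rho)$. The chain rule applied to the linear marginalization map $\rho \mapsto [\rho]^\theta$ shows the first term has differential $(\theta,r)\mapsto F'([\rho]^\theta)(\theta)$ (constant in $r$); the second term is linear with differential $\tfrac12|r|^2$; and $H'(\rho) = \log\rho + 1$. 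Hence
\[
\Ecal'(\rho)(\theta,r) = F'([\rho]^\theta)(\theta) + \tfrac12|r|^2 + \beta^{-1}\big(\log\rho(\theta,r) + 1\big),
\]
and taking the Euclidean gradient on $\Tcal\Theta = \Rbb^d\times\Rbb^d$,
\[
\nabla \Ecal'(\rho) = \vect{\nabla F'([\rho]^\theta)(\theta) + \beta^{-1}\nabla_\theta \log \rho \\ r + \beta^{-1}\nabla_r \log \rho}.
\]

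Next I would substitute this, together with $v(\rho)$ from~\eqref{eq:noisy_second_order}, into the identity $\partial_t \Ecal(\rho_t) = \braketd{\nabla \Ecal'(\rho_t)}{\rho_t v(\rho_t)}$ of Lemma~\ref{lem:variation}, and expand the integrand pointwise. Two simplifications occur. The cross terms carrying the force, $\dotp{\nabla F'([\rho]^\theta)(\theta)}{r}$ from the $\theta$-block and $-\dotp{r}{\nabla F'([\rho]^\theta)(\theta)}$ from the $r$-block, cancel identically. The two remaining ``mixed'' contributions, $\beta^{-1}\dotp{\nabla_\theta\log\rho}{r}$ and $-\beta^{-1}\dotp{\nabla_r\log\rho}{\nabla F'([\rho]^\theta)(\theta)}$, vanish after integration against $\rho_t$: using $\rho\,\nabla_\theta\log\rho = \nabla_\theta\rho$ and $\rho\,\nabla_r\log\rho = \nabla_r\rho$, they become $\int \dotp{\nabla_\theta\rho}{r}\,\differential\theta\differential r$ and $-\int \dotp{\nabla_r\rho}{\nabla F'([\rho]^\theta)(\theta)}\,\differential\theta\differential r$, and integration by parts in $\theta$ (respectively in $r$) kills each, since $r$ is independent of $\theta$ (respectively $\nabla F'([\rho]^\theta)(\theta)$ is independent of $r$), with the boundary fluxes vanishing by the decay of $\rho_t$. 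What is left in the integrand is $-\gamma|r|^2 - 2\gamma\beta^{-1}\dotp{r}{\nabla_r\log\rho} - \gamma\beta^{-2}|\nabla_r\log\rho|^2$, which is exactly $-\gamma\,|r + \beta^{-1}\nabla_r\log\rho|^2$ by completing the square; integrating against $\rho_t$ yields $\partial_t \Ecal(\rho_t) = -\gamma\braket{|r + \beta^{-1}\nabla_r\log\rho_t|^2}{\rho_t}$, as claimed.

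The main obstacle is not the algebra but its analytic justification. To invoke Lemma~\ref{lem:variation} one must verify that $\Ecal$ is Fr\'echet differentiable along $\rho_t$ with $\Ecal'(\rho_t)$ differentiable in $(\theta,r)$ --- the delicate piece being the negative entropy $H$, and the need for $\rho_t>0$ and regular enough that $\nabla_r\log\rho_t$ (hence the rewriting~\eqref{eq:noisy_second_order} of~\eqref{eq:PDE_second_order} as a continuity equation) is meaningful. One must also legitimize the two integrations by parts, i.e.\ control the boundary terms at infinity, which requires moment and gradient integrability bounds on $\rho_t$; these should follow from assumptions~\ref{A:grad_bounded}, \ref{A4} and \ref{A:init}, together with finiteness of the free energy $\Ecal(\rho_t)$ (which in turn rests on a lower bound for $\Ecal$ coming from the confining $g$ and the $|r|^2/2$ term). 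I would discharge these points by first proving the identity for smooth, rapidly decaying approximants of $\rho_t$ and passing to the limit, or by appealing to the kinetic Fokker--Planck regularity results cited in the paper.
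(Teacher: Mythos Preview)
Your proposal is correct and follows essentially the same approach as the paper: compute $\Ecal'(\rho)$, apply Lemma~\ref{lem:variation}, expand the pairing, observe the pointwise cancellation of the $\dotp{\nabla F'([\rho]^\theta)}{r}$ terms, kill the two mixed terms $\braket{\dotp{\ell_\theta}{r}}{\rho_t}$ and $\braket{\dotp{\ell_r}{\nabla F'}}{\rho_t}$ by integration by parts (duality of $\nabla$ and $\nabla\cdot$), and recognize the remainder as a perfect square. Your added paragraph on the analytic justification (positivity and regularity of $\rho_t$, vanishing boundary terms) is more explicit than the paper's own proof, which performs the computation formally and relies on the standing assumptions for these points.
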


\begin{proof}
From (\ref{eq:free_energy}), we obtain
\begin{equation}
\label{eq:d_free_energy}
\Ecal'(\rho)(\theta, r) = F'([\rho]^\theta)(\theta) + \frac{1}{2}|r|^2 + \beta^{-1} (1 + \log \rho(\theta, r)),
\end{equation}
and, using the shorthand $\ell_\theta := \beta^{-1}\nabla_\theta \log \rho_t$, $\ell_r := \beta^{-1}\nabla_r \log \rho_t$, we compute
\def\adjmargin{\kern-12pt}
\begin{align*}
\partial_t \Ecal(\rho_t)
&= \braketd{\nabla \Ecal'(\rho_t)}{\rho_t v(\rho_t)} & \text{\adjmargin by Lemma~\ref{lem:variation}}\\
&= \braketd{\vect{\nabla_\theta F'([\rho_t]^\theta) + \ell_\theta \\ r + \ell_r}}{\rho_t\vect{r \\ -\nabla_\theta F'([\rho_t]^\theta) - \gamma r - \gamma\ell_r}} & \text{\adjmargin by~\eqref{eq:noisy_second_order} and~\eqref{eq:d_free_energy}}\\
&= \braket{ - \gamma \dotp{r}{r} - \gamma \dotp{\ell_r}{\ell_r} - 2\gamma \dotp{r}{\ell_r} + \dotp{\ell_\theta}{r} - \dotp{\ell_r}{\nabla_\theta F'([\rho_t]^\theta)} }{ \rho_t}.
\end{align*}
We conclude by showing that the last two terms, $\braket{\braket{\ell_\theta}{r}}{\rho_t}$, and $\braket{\braket{\ell_r}{\nabla_\theta F'([\rho_t]^\theta)}}{\rho_t}$ are equal to zero, using duality of the $\nabla \cdot$ and $\nabla$ operators. The details are provided in Appendix~\ref{app:details}.
\end{proof}
The proposition states that the free energy $\Ecal$ is non-increasing along solution trajectories. This fact, together with additional bounds derived in Appendix~\ref{app:lyap}, are the primary ingredients used to prove our main results in the next section.


\section{Stationary solutions and global convergence}
\label{sec:StationarySolution}

We say $\rho^{\star}\in\Pcal$ is a stationary solution of (\ref{eq:noisy_second_order}) if the solution $\left(\rho_{t}\right)_{t\geq 0}$ obtained with the initial condition $\rho_{0}\equiv \rho^{\star}$, satisfies $\rho_{t} \equiv \rho^{\star}$ for all $t\geq 0$.

In this section, we state our main results (proved in Appendix~\ref{app:stationary}), by characterizing stationary solutions (Theorem~\ref{thm:decomposition}), proving their existence and uniqueness (Proposition~\ref{prop:boltzmann}), and establishing convergence of $\rho_t$, as $t \to \infty$, to the unique stationary point (Theorem~\ref{thm:stationarymeasure}). Furthermore, we show that the limit can be made arbitrarily close to the global infimum (Theorem~\ref{thm:minimum}).

\begin{theorem}
\label{thm:decomposition}
Suppose $\rho^{\star} \in \Pcal(\Tcal\Theta)$ is a stationary solution of~\eqref{eq:noisy_second_order}. Then,
\begin{equation}
\label{eq:sqrt_mu}
\rho^{\star}(\theta, r) = \frac{\exp\left(-\frac{\beta}{2}|r|^2\right)}{Z_1} [\rho^\star]^\theta(\theta).
\end{equation}
where $Z_1$ is the normalizing constant $Z_1 := \int \exp\big(-\frac{\beta}{2}|r|^2\big)\differential r$ and $[\rho^\star]^\theta$ is the $\theta$ marginal. Furthermore, $[\rho^\star]^\theta$ solves the following fixed point equation:
\begin{equation}
\label{eq:boltzmann}
\rho(\theta) = \frac{\exp \left(-\beta F'(\rho)(\theta)\right)}{Z_2(\rho)}, \rho \in \Pcal(\Theta)
\end{equation}
where $Z_2(\rho) := \int \exp \left(- \beta F'(\rho)(\theta)\right) \differential\theta$.
\end{theorem}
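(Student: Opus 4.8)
The plan is to combine the free-energy dissipation identity of Proposition~\ref{prop:lyap_decreasing} with a direct substitution into the stationary equation. First I would note that if $\rho^{\star}$ is stationary then $\rho_t \equiv \rho^{\star}$ for all $t$, so $t \mapsto \Ecal(\rho_t)$ is constant and $\partial_t \Ecal(\rho_t) = 0$; Proposition~\ref{prop:lyap_decreasing} then gives $\gamma \braket{|r + \beta^{-1}\nabla_r \log \rho^{\star}|^2}{\rho^{\star}} = 0$. Since $\gamma > 0$ and the integrand is nonnegative, this forces $\nabla_r \log \rho^{\star}(\theta, r) = -\beta r$ on the support of $\rho^{\star}$. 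To upgrade this $\rho^{\star}$-a.e.\ identity to one holding on all of $\Tcal\Theta$, I would invoke regularity of stationary solutions of the kinetic Fokker--Planck equation: hypoellipticity (the diffusion acts only on $r$, but the transport term $r\cdot\nabla_\theta$ supplies the missing directions through iterated brackets) yields smoothness, and a Harnack-type argument yields strict positivity, so $\{\rho^{\star} > 0\} = \Tcal\Theta$. Integrating the resulting ODE $\nabla_r \log \rho^{\star} = -\beta r$ in the $r$ variable gives $\rho^{\star}(\theta, r) = c(\theta)\exp(-\tfrac{\beta}{2}|r|^2)$, and integrating over $r$ identifies $c(\theta) = [\rho^{\star}]^\theta(\theta)/Z_1$. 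This is precisely~\eqref{eq:sqrt_mu}.

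For the fixed-point equation, I would substitute~\eqref{eq:sqrt_mu} into the stationary form $\nabla\cdot(\rho^{\star} v(\rho^{\star})) = 0$ of~\eqref{eq:noisy_second_order}. Because $\nabla_r \log \rho^{\star} = -\beta r$, the damping term $-\gamma r$ is exactly cancelled by $-\gamma\beta^{-1}\nabla_r \log \rho^{\star} = \gamma r$, so $v(\rho^{\star})(\theta, r) = (r,\ -\nabla_\theta F'([\rho^{\star}]^\theta)(\theta))$. Writing $\nu := [\rho^{\star}]^\theta$ and $\phi := F'(\nu)$, a short computation ($\nabla_\theta\cdot(\rho^{\star} r) = Z_1^{-1}e^{-\frac{\beta}{2}|r|^2}\, r\cdot\nabla_\theta\nu$ and $\nabla_r\cdot(-\rho^{\star}\nabla_\theta\phi) = \beta Z_1^{-1}e^{-\frac{\beta}{2}|r|^2}\,\nu\, r\cdot\nabla_\theta\phi$) reduces the stationary equation to $r\cdot(\nabla_\theta\nu + \beta\nu\nabla_\theta\phi) = 0$ for every $(\theta, r)$. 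Letting $r$ range over a basis of $\Rbb^d$ forces $\nabla_\theta\nu + \beta\nu\nabla_\theta\phi \equiv 0$, i.e.\ $\nabla_\theta\log\nu = -\beta\nabla_\theta F'(\nu)$ on $\Theta$. Since $\Theta = \Rbb^d$ is connected, integrating yields $\log\nu + \beta F'(\nu) \equiv \text{const}$, hence $\nu(\theta) = e^{-\beta F'(\nu)(\theta)}/Z$; imposing $\int \nu = 1$ gives $Z = Z_2(\nu)$, which is~\eqref{eq:boltzmann}.

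Throughout, differentiability of $\phi = F'(\nu)$ in $\theta$ is ensured by~\ref{A:R_diff}--\ref{A:psi_diff}, boundedness of $\nabla F'(\cdot)$ by~\ref{A:grad_bounded}, and the manipulations (product rule, cancellation, taking logarithms of the strictly positive $\rho^{\star}$ and $\nu$) are then legitimate. I expect the main obstacle to be not the algebra but the two regularity points: (i) establishing smoothness and strict positivity of the stationary solution, so that the a.e.\ identities become pointwise identities and $\log\rho^{\star}$, $\log\nu$ are globally defined; and (ii) verifying that $\rho^{\star}$ meets the integrability requirements of~\ref{A:init} (in particular $\Ecal(\rho^{\star}) < \infty$) needed to run Proposition~\ref{prop:lyap_decreasing} on the trajectory issued from $\rho^{\star}$, which should follow from the confining hypothesis~\ref{A4} together with the a priori bounds of Appendix~\ref{app:lyap}.
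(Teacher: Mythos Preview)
Your proposal is correct and follows essentially the same route as the paper: use the dissipation identity of Proposition~\ref{prop:lyap_decreasing} to force $\nabla_r\log\rho^\star=-\beta r$, deduce the Gaussian-in-$r$ factorization, then substitute into the stationary continuity equation $\nabla\cdot(\rho^\star v(\rho^\star))=0$ to obtain the Boltzmann fixed-point condition on the $\theta$-marginal. The only minor difference is that the paper upgrades the $\rho^\star$-a.e.\ identity to a pointwise one via a direct continuity argument (writing $\rho^\star = Z_1^{-1}e^{-\beta|r|^2/2}\eta(\theta,r)$ and noting that $\eta(\theta,\cdot)$ is constant on its support and zero off it, hence constant by continuity) rather than appealing to hypoellipticity and a Harnack inequality as you do.
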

The proof crucially relies on the variation of the free energy given in Proposition~\ref{prop:lyap_decreasing}. The theorem states that a stationary solution, if it exists, must be a product distribution, where the $r$ marginal is a Gaussian, and the $\theta$ marginal satisfies a fixed point equation. This product structure is familiar from the linear case (Section~\ref{sec:linear}), where $F'(\rho) \equiv f$ and the RHS of~\eqref{eq:boltzmann} becomes independent of $\rho$, and simply describes a Gibbs distribution. In the nonlinear case, it is not guaranteed, a priori, that \eqref{eq:boltzmann} admits a solution. This is proved in the next proposition; our existence proof invokes Schauder's fixed point theorem~\cite[p. 286, Theorem 11.6]{gilbarg2015elliptic}, and this is where assumption~\ref{A4} comes into play.

\begin{proposition}
\label{prop:boltzmann}
Suppose assumption \ref{A4} holds, and let $T: \Pcal(\Theta) \mapsto \Pcal(\Theta)$ be defined as follows:
\[
T(\rho)(\theta) = \frac{\exp\left(-\beta F'(\rho)(\theta)\right)}{Z_2(\rho)},
\]
where $Z_2(\rho) = \int \exp\left(-\beta F'(\rho)\right)$. Then $T$ has a unique fixed point.
\end{proposition}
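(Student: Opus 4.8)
The plan is to split the statement into two parts: existence via Schauder's fixed point theorem, and uniqueness via a direct variational/convexity argument (or a contraction-type estimate). For existence, I would first check that $T$ is well-defined: by \ref{A4}, for $\rho \in \Pcal(\Theta)$ the functional derivative $F'(\rho) = F_0'(\rho) + g$ is a sum of a uniformly bounded, uniformly equicontinuous term $F_0'(\rho)$ and the confining potential $g$, so $\exp(-\beta F'(\rho))$ is integrable (the $F_0'$ part only contributes a bounded multiplicative factor $e^{\pm\beta\|F_0'(\rho)\|_\infty}$, and $\exp(-\beta g)$ is integrable by assumption), hence $Z_2(\rho) \in (0,\infty)$ and $T(\rho)$ is a genuine probability density. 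The next step is to exhibit a nonempty, convex, compact (in a suitable topology) subset $\Kcal \subset \Pcal(\Theta)$ with $T(\Kcal) \subseteq \Kcal$. The natural candidate is a set of densities pinched between two fixed integrable envelopes: because $\|F_0'(\rho)\|_\infty \le M$ uniformly, we get pointwise bounds $c_- e^{-\beta g(\theta)} \le T(\rho)(\theta) \le c_+ e^{-\beta g(\theta)}$ for constants $c_\pm$ depending only on $M$, $\beta$, and $\int e^{-\beta g}$. Taking $\Kcal$ to be the set of probability densities satisfying these two-sided bounds, this is convex, and it is compact in $L^1(\Theta)$: the uniform domination by the integrable function $c_+ e^{-\beta g}$ gives tightness and uniform integrability, and I would additionally use the uniform equicontinuity of $\{F_0'(\rho)\}$ to get an equicontinuity (hence, with tightness, $L^1$-precompactness via Arzel\`a--Ascoli plus a diagonal/truncation argument) of the family $\{T(\rho)\}$, which is the image we actually care about. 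So in fact it is cleaner to apply Schauder on the closed convex hull of $T(\Kcal)$.

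The third step is continuity of $T$ on $\Kcal$ in the $L^1$ topology. If $\rho_k \to \rho$ in $L^1$, then — using \ref{A:R_diff}, \ref{A:psi_diff}, and the structure $F'(\rho) = R'(\langle\Psi,\rho\rangle)$ composed appropriately plus $g$ — one shows $F'(\rho_k)(\theta) \to F'(\rho)(\theta)$ pointwise (indeed uniformly on compacts, by equicontinuity), hence $\exp(-\beta F'(\rho_k)) \to \exp(-\beta F'(\rho))$ pointwise; the uniform domination by $c_+ e^{-\beta g}$ lets dominated convergence deliver $Z_2(\rho_k) \to Z_2(\rho) > 0$ and then $T(\rho_k) \to T(\rho)$ in $L^1$. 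With $\Kcal$ (or the closed convex hull of $T(\Kcal)$) convex and compact and $T$ continuous and self-mapping, Schauder's theorem \cite[Theorem 11.6]{gilbarg2015elliptic} yields a fixed point.

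For uniqueness, the plan is to exploit convexity of $F$. A fixed point of $T$ is exactly a critical point of the free energy restricted to the $\theta$-marginal, i.e.\ a minimizer of the strictly convex functional $\rho \mapsto F(\rho) + \beta^{-1}\langle \log\rho, \rho\rangle$ over $\Pcal(\Theta)$ (the Euler--Lagrange equation $\beta F'(\rho) + \log\rho = \text{const}$ is precisely \eqref{eq:boltzmann}). Since $F$ is convex and the negative entropy is strictly convex, this functional has at most one minimizer, giving uniqueness. I would verify the equivalence "fixed point of $T$ $\iff$ minimizer" carefully: any fixed point satisfies the E--L equation and, by convexity, is a global minimizer; conversely the minimizer satisfies the E--L equation and hence is a fixed point. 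Alternatively, if one prefers to avoid the variational characterization, uniqueness can be obtained from a monotonicity inequality: for two fixed points $\rho_1, \rho_2$, compute $\langle F'(\rho_1) - F'(\rho_2), \rho_1 - \rho_2\rangle \ge 0$ by convexity of $F$, while the fixed-point equations force this same quantity to equal $\beta^{-1}\langle \log\rho_2 - \log\rho_1, \rho_1 - \rho_2\rangle \le 0$ by monotonicity of $\log$, so both sides vanish and $\rho_1 = \rho_2$.

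The main obstacle I anticipate is the compactness step: $\Theta = \Rbb^d$ is unbounded, so one must genuinely combine the tail control from the confining potential $g$ (tightness) with the equicontinuity of $\{F_0'(\rho)\}$ to get $L^1$-precompactness of the relevant family, rather than invoking a soft weak-$*$ compactness that would not interact well with the nonlinearity in $T$. Getting the self-mapping envelopes and the continuous dependence of $Z_2(\rho)$ on $\rho$ to cooperate on the same function space is the delicate bookkeeping; everything else is routine given assumptions \ref{A:R_diff}--\ref{A4}.
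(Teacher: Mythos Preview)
Your proposal is correct and follows essentially the same route as the paper: existence via Schauder with precompactness of the image obtained from Arzel\`a--Ascoli applied to the uniformly equicontinuous, uniformly bounded family $\{F_0'(\rho)\}$ together with dominated convergence for $Z_2$, and uniqueness via convexity of $F$ (your second argument is exactly the paper's, which rewrites $\langle\log\rho_1-\log\rho_2,\rho_1-\rho_2\rangle$ as the symmetrized KL divergence $D_{\mathrm{KL}}(\rho_1\|\rho_2)+D_{\mathrm{KL}}(\rho_2\|\rho_1)$). The only cosmetic differences are that the paper applies Schauder directly on the closed convex set $B=\{\rho\in L^1(\Theta):\rho\ge 0,\ \|\rho\|_1\le 1\}$ rather than building an envelope set, and upgrades pointwise convergence of $T(\rho_{k_n})$ to $L^1$ convergence via Scheff\'e's lemma (all are probability densities) rather than via dominated convergence.
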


We next show that the solution trajectory $\rho_t$ converges to the unique stationary solution $\rho^\star$, under mild assumptions on the initial condition.
\begin{theorem}
\label{thm:stationarymeasure}
Consider a measure $\mu_{0}\in\Mac\left(\mathcal{T}\Theta\right)$ satisfying the assumption \ref{A:init}. Starting from such an initial condition $\mu_{0}$, the solution $(\mu_{t})_{t\geq 0}$ of (\ref{eq:noisy_second_order}) satisfies the following.

\noindent(i) For each $t\geq 0$, the measure $\mu_{t}\in\Mac\left(\mathcal{T}\Theta\right)$, i.e., the associated joint PDF $\rho_{t}\in\Pcal\left(\mathcal{T}\Theta\right)$ exists. 

\noindent(ii) The trajectory $(\rho_{t})_{t\geq 0}$ converges strongly in $L^{1}$ to the unique stationary solution $\rho^{\star}$ of (\ref{eq:noisy_second_order}) as~$t\rightarrow\infty$.
\end{theorem}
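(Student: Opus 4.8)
The plan is to combine the monotonicity of the free energy $\Ecal$ (Proposition~\ref{prop:lyap_decreasing}) with a compactness/LaSalle-type argument, then invoke the characterization of stationary points (Theorem~\ref{thm:decomposition}) and their uniqueness (Proposition~\ref{prop:boltzmann}) to pin down the limit. I would proceed in four stages.

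\textbf{Stage 1: absolute continuity and a priori bounds.} First I would establish part~(i): propagation of absolute continuity. Since $\mu_0$ is absolutely continuous by \ref{A:init}, and the drift in \eqref{eq:PDE_second_order} is (globally Lipschitz-in-law, locally Lipschitz-in-$(\theta,r)$) regular enough by \ref{A:grad_bounded}, the kinetic Fokker--Planck equation with nondegenerate diffusion in $r$ has a hypoelliptic smoothing effect; standard results for such equations (e.g.~\cite{victory1990classical,bouchut1995long}) guarantee that the unique solution $\mu_t$ stays in $\Mac(\Tcal\Theta)$ for all $t\ge 0$, so $\rho_t$ exists. Next, using \ref{A:init} ($\langle g(\theta)+|r|^2/2,\rho_0\rangle<\infty$, $\langle\log^+\rho_0,\rho_0\rangle<\infty$, and the $L^2$ condition on $\nabla F_0'$), I would show $\Ecal(\rho_0)<\infty$. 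By Proposition~\ref{prop:lyap_decreasing}, $\Ecal(\rho_t)$ is non-increasing, hence bounded above by $\Ecal(\rho_0)$ and below by $\inf F + 0 + \beta^{-1}\inf H>-\infty$ (the entropy is bounded below because $g$ is confining and $|r|^2/2$ is confining in $r$, so the second moment controls the negative part of the entropy). This yields uniform-in-$t$ bounds on $\langle g(\theta)+|r|^2/2,\rho_t\rangle$ and on $H(\rho_t)$, which in turn give tightness of the family $\{\rho_t\}_{t\ge 0}$ in $\Pcal(\Tcal\Theta)$ (the confining potential $g$ plus the quadratic in $r$ are coercive, so sublevel sets are tight). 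These bounds are collected in Appendix~\ref{app:lyap}, which the theorem is allowed to use.

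\textbf{Stage 2: the dissipation integral vanishes along a subsequence.} Integrating Proposition~\ref{prop:lyap_decreasing} in time gives
\[
\gamma \int_0^\infty \big\langle \,|r+\beta^{-1}\nabla_r\log\rho_t|^2\,,\,\rho_t\,\big\rangle\,\differential t \;=\; \Ecal(\rho_0)-\lim_{t\to\infty}\Ecal(\rho_t)\;<\;\infty .
\]
Hence there is a sequence $t_k\to\infty$ along which $\big\langle |r+\beta^{-1}\nabla_r\log\rho_{t_k}|^2,\rho_{t_k}\big\rangle\to 0$, i.e. $\sqrt{\rho_{t_k}}\,\big(r+\beta^{-1}\nabla_r\log\rho_{t_k}\big)\to 0$ in $L^2(\Tcal\Theta)$. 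By Stage~1 and Prokhorov, passing to a further subsequence I may assume $\rho_{t_k}\rightharpoonup \rho_\infty$ weakly in $\Pcal(\Tcal\Theta)$, and the uniform second-moment/entropy bounds upgrade this to strong $L^1$ convergence along the subsequence (uniform integrability from the moment bound plus lower semicontinuity of entropy, or a Dunford--Pettis argument). The vanishing of the dissipation, together with weak lower semicontinuity, forces the limit $\rho_\infty$ to satisfy $r+\beta^{-1}\nabla_r\log\rho_\infty=0$ distributionally, i.e. $\nabla_r(\log\rho_\infty + \tfrac{\beta}{2}|r|^2)=0$, so $\rho_\infty(\theta,r)=e^{-\beta|r|^2/2}[\rho_\infty]^\theta(\theta)/Z_1$ — the product form of \eqref{eq:sqrt_mu}.

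\textbf{Stage 3: identify $\rho_\infty$ as \emph{the} stationary solution.} Here I would argue that $\rho_\infty$ is stationary for \eqref{eq:noisy_second_order}. The cleanest route is via the Lyapunov functional: $\Ecal(\rho_t)$ decreases to a limit $\Ecal_\infty\ge\Ecal(\rho_\infty)$, while stationarity of the flow at any limit point follows because the flow map $\rho_0\mapsto\rho_t$ is continuous (in the appropriate topology, by well-posedness of \eqref{eq:PDE_second_order}) and $\Ecal$ is strictly monotone off the set of stationary points: if $\rho_\infty$ were not stationary, running the flow a short time from $\rho_\infty$ would strictly decrease $\Ecal$, contradicting, via continuity and the time-translation $\rho_{t_k+s}$, that $\Ecal(\rho_t)\to\Ecal_\infty$. (Equivalently: the product form from Stage~2 kills the $r$-dissipation, and then the continuity equation forces the remaining transport terms $r\cdot\nabla_\theta\rho + \nabla_\theta\cdot(\rho(\nabla F'+\gamma r+\gamma\beta^{-1}\nabla_r\log\rho))$ to balance, which after using \eqref{eq:sqrt_mu} reduces exactly to the Boltzmann fixed-point equation \eqref{eq:boltzmann} for $[\rho_\infty]^\theta$.) By Theorem~\ref{thm:decomposition} and Proposition~\ref{prop:boltzmann}, there is only one such solution, namely $\rho^\star$; hence $\rho_\infty=\rho^\star$, and in particular $\Ecal_\infty=\Ecal(\rho^\star)$.

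\textbf{Stage 4: upgrade subsequential convergence to full convergence.} Finally, since \emph{every} subsequential limit (in the weak, hence strong-$L^1$, sense) of $\{\rho_t\}$ equals the same point $\rho^\star$ — the argument of Stages~2--3 applies to any sequence $t_k\to\infty$, using that $\Ecal(\rho_t)\to\Ecal(\rho^\star)$ already forces the dissipation to vanish along it in a Cesàro sense, and the uniqueness of the minimizer of $\Ecal$ over stationary states — the whole trajectory converges: $\rho_t\to\rho^\star$ strongly in $L^1$ as $t\to\infty$. Here one uses that $\rho^\star$ is the unique global minimizer of $\Ecal$ on $\Pcal(\Tcal\Theta)$ (the free energy is strictly convex in $\rho$ because $R$ is convex, $H$ is strictly convex, and the quadratic term is linear), so $\Ecal(\rho_t)\downarrow\Ecal(\rho^\star)$ plus strict convexity gives $\rho_t\to\rho^\star$ directly by a Csiszár--Kullback--Pinsker-type inequality relating the free-energy gap to the $L^1$ distance.

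\textbf{Main obstacle.} The delicate step is Stage~3 (equivalently the endgame of Stage~4): turning "the dissipation vanishes along a subsequence and the limit has product form" into "the limit is the \emph{unique} stationary solution, and the \emph{entire} trajectory converges." The subtlety is that vanishing of the $r$-marginal dissipation alone does not immediately control the $\theta$-dynamics; one must either (a) exploit hypocoercivity-type commutator estimates to also extract decay of $\nabla_\theta$-quantities, or (b) carefully use the strict convexity of $\Ecal$ together with its exact dissipation identity to show the free-energy gap $\Ecal(\rho_t)-\Ecal(\rho^\star)$ itself tends to $0$ and then convert this to $L^1$ convergence. Route (b) is cleaner and is what I would pursue, but it requires verifying a functional inequality (a kinetic analogue of a log-Sobolev / Csiszár--Kullback inequality adapted to the nonlinear coupling through $F'$), which is where assumptions \ref{A:grad_bounded} and \ref{A4} — boundedness and equicontinuity of the loss gradient — do the real work in making the nonlinear term a harmless perturbation of the linear (Gibbs) case.
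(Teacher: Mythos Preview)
Your plan diverges from the paper's in the key technical step, and the divergence introduces a real gap. The paper does not use the integral bound $\int_0^\infty\langle|r+\beta^{-1}\nabla_r\log\rho_t|^2,\rho_t\rangle\,\differential t<\infty$ to extract a subsequence with vanishing dissipation; instead it proves (Theorem~\ref{thm:Barabalat} in Appendix~\ref{app:lyap}) that the dissipation vanishes along the \emph{entire} flow, by showing that $\partial_t(\partial_t\Ecal(\rho_t))$ is bounded and invoking Barbalat's lemma---this requires somewhat tedious estimates on $|v(\rho_t)|$ and on derivatives of $\rho_t$. That full-limit statement is then fed into a time-shifted compactness argument borrowed from~\cite{soler1997asymptotic}: setting $\zeta_t(s,\cdot):=\rho_{t+s}(\cdot)$, one shows $\|r\sqrt{\zeta_t}+2\beta^{-1}\nabla_r\sqrt{\zeta_t}\|_{L^2([0,T]\times\Tcal\Theta)}\to 0$, obtains relative compactness of $(\zeta_t)_{t\ge 0}$ in $C([0,T],L^1(\Tcal\Theta))$, and identifies any limit with the unique stationary solution. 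No LaSalle invariance principle and no Csisz\'ar--Kullback--Pinsker-type inequality is used.

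Your Stage~3 LaSalle argument, by contrast, has a gap that is not easily patched: you claim that if a subsequential limit $\rho_\infty$ were non-stationary, then $\Ecal(\Phi_s(\rho_\infty))<\Ecal(\rho_\infty)\le\Ecal_\infty$ would contradict $\Ecal(\rho_{t_k+s})\to\Ecal_\infty$. But the entropy term makes $\Ecal$ only \emph{lower} semicontinuous under $L^1$ (or weak) convergence, so from $\rho_{t_k+s}\to\Phi_s(\rho_\infty)$ you obtain only $\Ecal(\Phi_s(\rho_\infty))\le\liminf_k\Ecal(\rho_{t_k+s})=\Ecal_\infty$, which is no contradiction at all. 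This is precisely why one needs either the full-flow Barbalat statement (so that \emph{every} subsequential limit already has zero dissipation and hence the product form, without appealing to continuity of $\Ecal$) or a genuine functional inequality of the kind you flag in Stage~4; the latter is not established in the paper and would be new work under the nonlinear coupling through $F'$. For part~(i), your hypoellipticity route is arguably cleaner than the paper's argument (which goes through Dunford--Pettis and de~la~Vall\'ee-Poussin's criterion using the uniform moment and entropy bounds of Proposition~\ref{prop:BoundedQuantitites}); either approach suffices there.
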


Now that we have established the existence and uniqueness of a stationary solution $\rho^\star \in \Pcal(\Tcal\Theta)$, and convergence to $\rho^\star$, we will relate, in the next theorem, $F([\rho^\star]^\theta)$ to $\inf_{\rho \in \Pcal(\Theta)} F(\rho)$. Some intuition can be gained from the linear case: when $F(\rho) = \braket{f}{\rho}$, the stationary solution is simply given by the Gibbs distribution, $[\rho^\star]^\theta(\cdot) \propto \exp(-\beta f(\cdot))$, which concentrates around the minimizers of $f$ as $\beta \to \infty$, thus $F([\rho^\star]^\theta)$ approaches $\inf_{\rho \in\Pcal(\Theta)}F(\rho)$ as $\beta \to \infty$. The same holds in our non-linear setting, as stated in the next theorem.

For $\lambda \in [0, 1]$, let $F_\lambda(\rho) := R(\braket{\Psi}{\rho}) + \lambda \braket{g}{\rho}$, so that $F_1 \equiv F$.
\begin{theorem}
\label{thm:minimum}
Let $\rho^\star$ be the stationary solution of~\eqref{eq:noisy_second_order}, and let $[\rho^\star]^\theta$ be its marginal. Then there exists a constant $C$ that depends on $F$ and $d$, such that for all $\beta \geq 1$,
\[
F_{1 - 1/\beta}([\rho^\star]^\theta) \leq \inf_{\rho \in \Pcal(\Theta)} F_1(\rho) + \frac{C + d \log \beta}{\beta}.
\]
\end{theorem}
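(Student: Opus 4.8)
The plan is to first show that the $\theta$-marginal $p := [\rho^\star]^\theta$ is the minimizer of the entropy-regularized functional $\rho \mapsto F(\rho) + \beta^{-1} H(\rho)$ over $\Pcal(\Theta)$, then to trade the entropy penalty for the linear penalty $\beta^{-1}\langle g,\cdot\rangle$ at the cost of an explicit constant (this is where the index shift $1\mapsto 1-1/\beta$ comes from), and finally to bound the resulting entropy-regularized infimum by $\inf_\rho F_1(\rho)$ plus a term of size $O((d\log\beta)/\beta)$ by testing against a mollified minimizer of $F_1$.

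For the first step, recall from Theorem~\ref{thm:decomposition} that $p$ solves the Boltzmann equation, equivalently $\beta F'(p)(\theta) = -\log p(\theta) - \log Z_2(p)$. Given any $\sigma \in \Pcal(\Theta)$ with $H(\sigma) < \infty$, the subgradient inequality for the convex differentiable functional $F$ gives $F(\sigma) \ge F(p) + \langle F'(p), \sigma - p\rangle$; substituting the Boltzmann identity for $F'(p)$ and cancelling the $\log Z_2(p)$ terms, the difference $\big(F(\sigma) + \beta^{-1}H(\sigma)\big) - \big(F(p) + \beta^{-1}H(p)\big)$ collapses to $\beta^{-1}\,\mathrm{KL}(\sigma\|p) \ge 0$. (In other words, the Boltzmann equation is exactly the Euler--Lagrange equation of $F + \beta^{-1}H$.) Hence $F(p) + \beta^{-1}H(p) \le F(\sigma) + \beta^{-1}H(\sigma)$ for every such $\sigma$. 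One can equally deduce this from Proposition~\ref{prop:lyap_decreasing} and Theorem~\ref{thm:stationarymeasure} by letting $\Ecal$ decrease along the trajectory started at $\sigma$ tensored with the Gaussian $r$-marginal, but the direct Gibbs-variational argument above is cleaner and sidesteps any admissibility check on the initial condition.

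For the second step, set $Z_g := \int_\Theta e^{-g(\theta)}\,\differential\theta$, finite by~\ref{A4}, and use $F = F_{1-1/\beta} + \beta^{-1}\langle g,\cdot\rangle$ together with $\langle g, p\rangle + H(p) = \mathrm{KL}\big(p \,\|\, e^{-g}/Z_g\big) - \log Z_g \ge -\log Z_g$ to obtain $F_{1-1/\beta}(p) \le F(p) + \beta^{-1}H(p) + \beta^{-1}\log Z_g$. Combined with the first step, $F_{1-1/\beta}([\rho^\star]^\theta) \le F_1(\sigma) + \beta^{-1}H(\sigma) + \beta^{-1}\log Z_g$ for every $\sigma\in\Pcal(\Theta)$ of finite entropy. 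It remains to choose $\sigma$. Since $F_1$ is convex, weakly lower semicontinuous, and coercive (its sublevel sets are tight because $g$ is confining), it attains its infimum at some $\bar\rho \in \Pcal(\Theta)$, which has $\langle g, \bar\rho\rangle < \infty$. Take $\sigma := \bar\rho * \varphi_\beta$ where $\varphi_\beta$ is a fixed compactly supported mollifier rescaled to width $1/\beta$. Then $\sigma(\theta) \le \|\varphi_\beta\|_\infty \le c_d\,\beta^{d}$ gives $H(\sigma) \le \log\|\varphi_\beta\|_\infty \le d\log\beta + O(1)$; and since $\Psi$ and $g$ are differentiable and $\langle g,\bar\rho\rangle < \infty$, both $\|\langle\Psi,\sigma-\bar\rho\rangle\|_\Fcal$ and $\langle g, \sigma-\bar\rho\rangle$ are $O(1/\beta)$, so (using continuity of $R'$) $F_1(\sigma) \le \inf_\rho F_1(\rho) + O(1/\beta)$. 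Plugging this $\sigma$ in and using $\beta\ge 1$ yields $F_{1-1/\beta}([\rho^\star]^\theta) \le \inf_\rho F_1(\rho) + \tfrac{d\log\beta + O(1)}{\beta}$, which is the claim, with $C$ absorbing $\log Z_g$ and all other $\beta$-independent quantities — these depend on $d$ and, through $g$, $R$, $\Psi$ and $\bar\rho$, on $F$.

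The main obstacle is making the third step genuinely quantitative: one must verify that every error introduced by the mollification — the change in $R(\langle\Psi,\cdot\rangle)$, the change in $\langle g,\cdot\rangle$, and (if one first truncates $\bar\rho$ to a compact set for convenience) the truncation error — is $O(1/\beta)$ with a constant depending only on $F$ and $d$, not merely $o(1)$; this is where the differentiability in~\ref{A:psi_diff}, the uniform boundedness/equicontinuity of $F_0'$ in~\ref{A4}, continuity of $R'$, and the finiteness of $\langle g,\bar\rho\rangle$ (together with a mild growth control of $\nabla g$ and $\nabla\Psi$ relative to $g$) all enter. It is also the reason for mollifying with a compactly supported kernel rather than a Gaussian: a Gaussian tail would make $\langle g,\sigma\rangle$ infinite for fast-growing confining regularizers. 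The remaining ingredients — the Boltzmann characterization, the two applications of $\mathrm{KL}\ge 0$, and the existence of the minimizer $\bar\rho$ — are routine once Theorem~\ref{thm:decomposition} and the convexity and coercivity of $F_1$ are available.
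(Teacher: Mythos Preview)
Your proof is correct and follows the same three-step skeleton as the paper --- the stationary solution minimizes an entropy-regularized functional, the entropy is traded against $\langle g,\cdot\rangle$ at the cost of a constant, and the regularized infimum is compared to $\inf F_1$ by mollification --- but each step is executed differently. The paper works on the joint space $\Tcal\Theta$ with the full free energy $\Ecal$: it first proves (Lemma~\ref{lem:free_energy_minimizer}) that $\rho^\star$ minimizes $\Ecal$ over $\Kcal$ by a perturbation/compactness argument, then invokes Proposition~\ref{prop:lyap_bounded} (the $\log^-$ estimate) to obtain $F_{1-1/\beta}([\rho^\star]^\theta)\le \Ecal(\rho^\star)+C(1)/\beta$, and finally tests $\Ecal$ at $(g_1*\eta)\otimes g_2$ with Gaussian mollifiers $g_1,g_2$ of variance $1/\beta$, citing \cite{mei2018mean} for the bound $F(g_1*\eta)\le F(\eta)+K/\beta$. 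You instead stay on $\Pcal(\Theta)$ with $F+\beta^{-1}H$: the minimization at $[\rho^\star]^\theta$ becomes a one-line Gibbs variational computation from the Boltzmann identity, the $\log^-$ estimate is replaced by the cleaner identity $\langle g,p\rangle+H(p)=\mathrm{KL}(p\,\|\,e^{-g}/Z_g)-\log Z_g$, and the mollifier is compactly supported rather than Gaussian. Your route is shorter and avoids the joint-space machinery; the compactly supported kernel also sidesteps the issue you flag, that a Gaussian tail can make $\langle g,\sigma\rangle$ infinite for superquadratic confining $g$ (the paper's reference to \cite{mei2018mean} implicitly leans on the quadratic case). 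One minor difference: the paper does not assume $\inf F_1$ is attained --- it bounds $\Ecal(\rho_\eta)$ for arbitrary $\eta$ and then takes the infimum --- whereas you invoke a minimizer $\bar\rho$ via coercivity; this is harmless under~\ref{A4}, and could in any case be avoided by using an $\epsilon$-minimizer.
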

The proof of the above theorem has two components: the first is the observation that $\rho^\star$ is a minimizer of the free energy $\Ecal$ (this follows from the characterization in Theorem~\ref{thm:decomposition}), the second is the bounds on the difference between $\Ecal$ and $F$ derived in Appendix~\ref{app:lyap}.

As a consequence of the theorem, the objective value at the stationary point can be made arbitrarily close to the global infimum of $F$ by taking $\beta$ large enough. It is worth emphasizing that the presence of noise, i.e., the diffusion term in \eqref{eq:PDE_second_order}, is essential in guaranteeing existence and uniqueness of the stationary distribution. In the noiseless case, there may exist multiple stationary points that are not global minimizers. The addition of noise can be thought of as an \emph{entropic regularization} of the functional $F(\rho)$, and Theorem~\ref{thm:minimum} says that one can approach the infimum of the unregularized problem in the small noise limit.

\section{Numerical simulations}\label{sec:Numerics}

To illustrate our results, we run synthetic numerical experiments following the setup used in~\cite{chizat2018global}. The model~$\psi$ is a two-layer neural network, as described in Section~\ref{sec:network}, with sigmoid activation function $s(\cdot)$ and width $n$, i.e., $\psi(x) = \frac{1}{n} \sum_{i = 1}^n b_i s(\braket{a_i}{x})$. The features~$x$ are normally distributed in $\Rbb^{d-1}$, and the ground truth labels are generated using a similar neural network $\psi^\star$ with width $n_0$, i.e., $y = \psi^\star(x)$. The risk functional is quadratic, i.e., $R(\psi) = \frac{1}{2}\|\psi(x) - y\|_\Fcal^2 = \frac{1}{2}\mathbb E_{x}[(\psi(x) - \psi^\star(x))^2]$, where the expectation is over the empirical distribution. We implement the stochastic heavy ball method using a simple Euler-Maruyama discretization of~\eqref{eq:second_order_particle}, this will be referred to as (SHB) in the figures. We also implement noiseless, second order dynamics: the heavy ball method, referred to as (HB), and Nesterov's accelerated gradient descent, referred to as (AGD).

\subsection{Convergence to the global infimum}
In a first set of experiments, we set the dimension to $d = 100$, and vary the width~$n$ of the model, while keeping the width of the ground truth network fixed to $n_0 = 20$. No regularization is used in this experiment, so that the model can theoretically achieve zero loss whenever $n \geq n_0$. The results are reported in Figure~\ref{fig:width}.
In the left subplot, each method is run for $10^5$ iterations, and we measure the loss at the last iteration. We repeat the experiment 20 times and plot the average (represented by the lines) and the individual numbers (scatter plot). The right subplot shows the full trajectory for one realization, for the width $n = 100$. The results suggest that the dynamics converge to the global infimum even with a reasonably small width $n$. The results also highlight the effect of noise: the stochastic heavy ball method converges closer to the global minimum when $\beta$ is larger, consistent with Theorem~\ref{thm:minimum}. Finally, the results for the noiseless heavy ball method and Nesterov's method suggest that convergence may occur for a broader class of second-order dynamics than the setting of our analysis.

\begin{figure}[h]
\centering
\includegraphics[width=.44\textwidth]{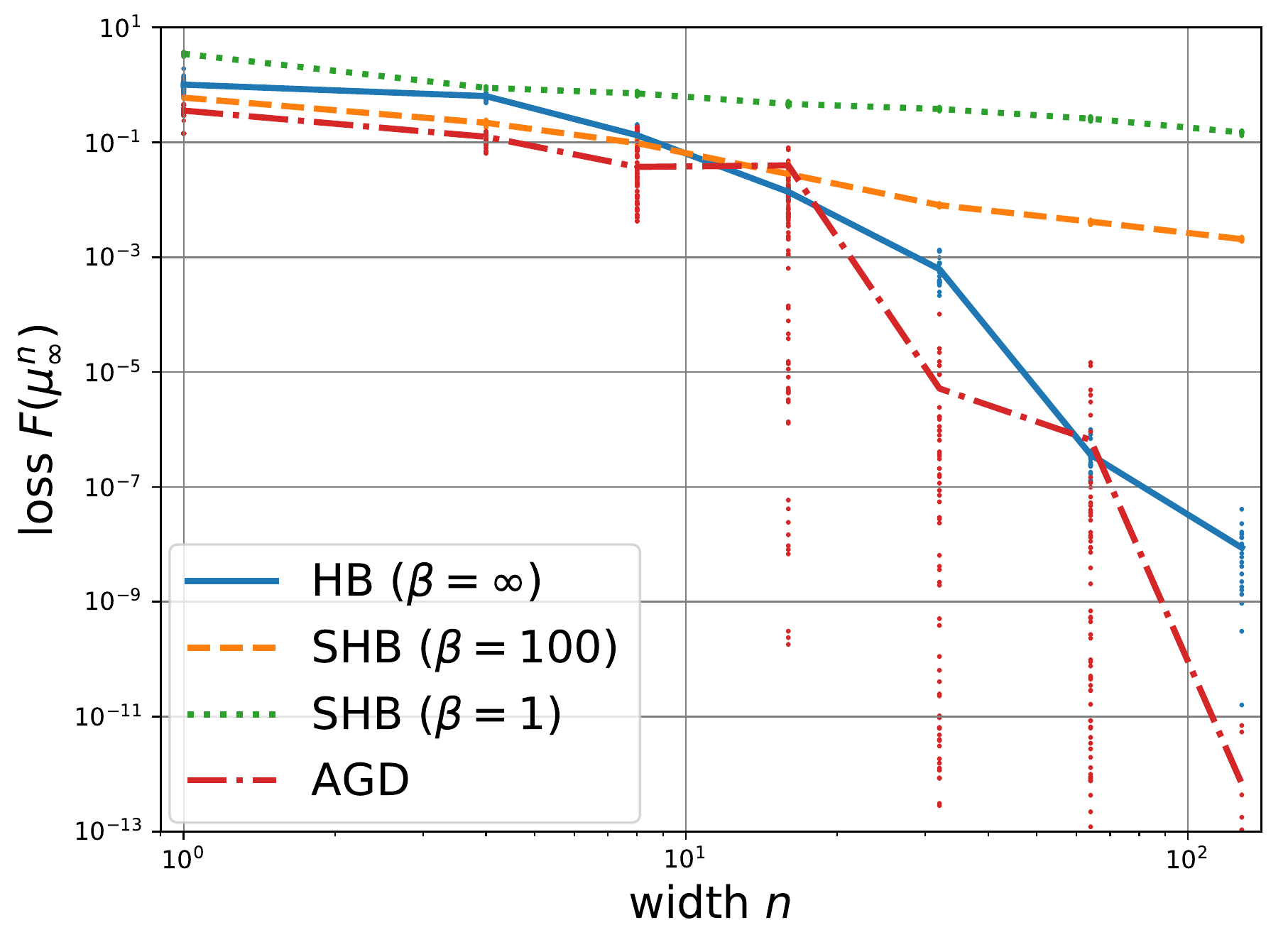}
\hspace{.1\textwidth}
\includegraphics[width=.44\textwidth]{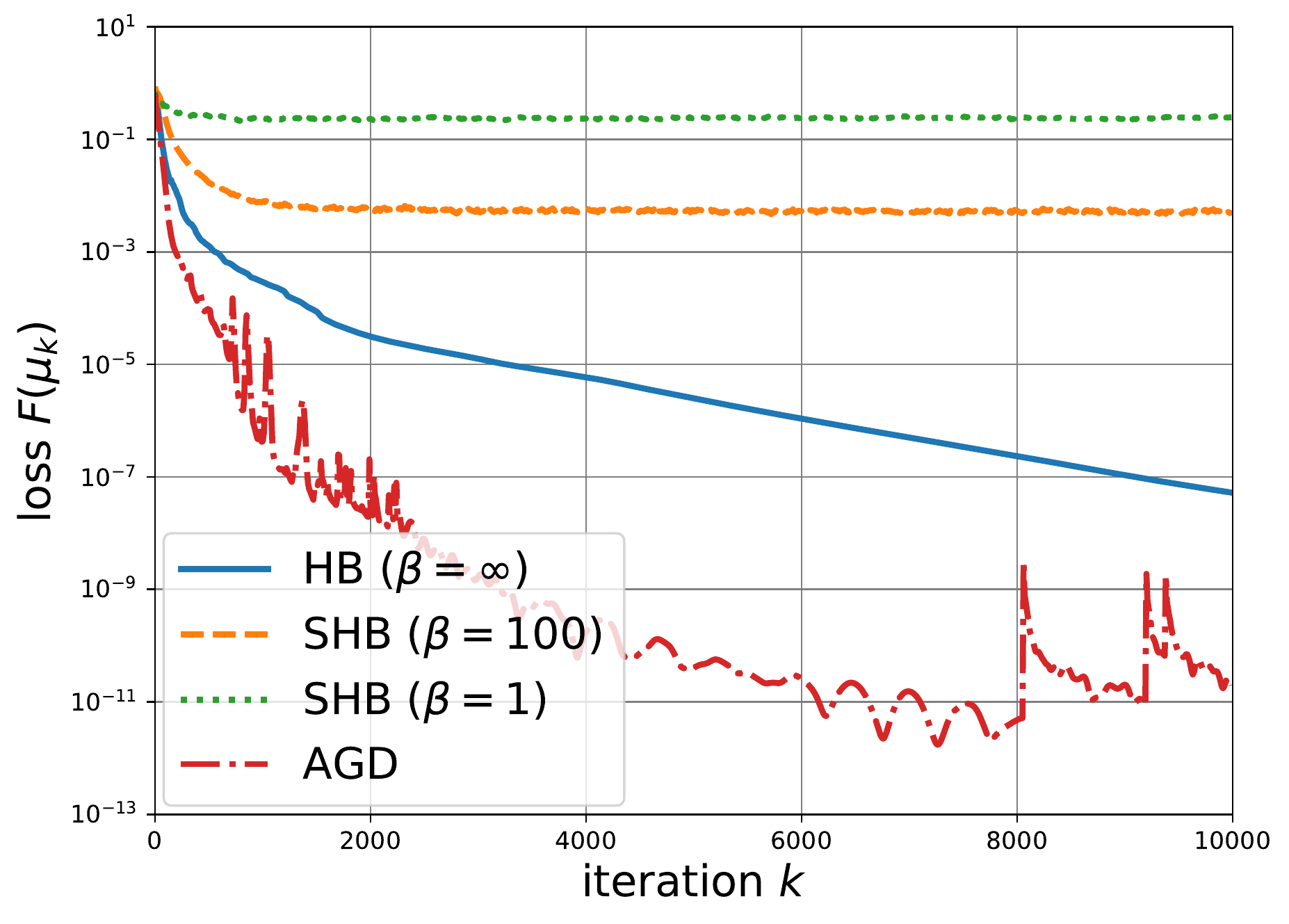}
\caption{Final loss value as the width $n$ of the network increases for several second-order dynamics (left), and sample trajectories for $n = 100$ (right).}
\label{fig:width}
\end{figure}

\subsection{Stationary distribution}
In a second experiment, we illustrate the characterization of the limiting distribution, which according to Theorem~\ref{thm:decomposition}, is the product of its marginals, where the $r$ marginal is a Gaussian $\propto \exp(-\beta|r|^2)$, and the $\theta$ marginal is $\propto \exp(-\beta F'([\rho_\infty]^\theta))$. Recall from~\eqref{eq:F} that $F(\mu) = R(\braket{\Psi}{\mu}) + \braket{g}{\mu}$, thus $F'(\mu)(\cdot) = \braketf{R'(\braket{\Psi}{\mu})}{\Psi(\cdot)} + g(\cdot)$, where $g$ is the regularizer, which we set to $g(\theta) = 0.01 |\theta|$ in this experiment. The risk functional is $R(\psi) = \frac{1}{2}\|\psi - \psi^\star\|_\Fcal^2$, thus $R'(\psi) = \psi - \psi^\star$, and
\begin{equation}
\label{eq:dF_quadratic}
F'([\mu]^\theta)(\cdot) = \mathbb E_{x} \big[ \big( \braket{\Psi}{[\mu]^\theta}(x) - \psi^\star(x) \big) \Psi(\cdot)(x) \big] + g(\cdot).
\end{equation}
In particular, if we apply this expression to the empirical distribution of the particles $[\mu^n]^\theta = \frac{1}{n} \sum_{i = 1}^n \delta_{\theta_i}$, and use $m$ independent samples $x_j \sim D$ to approximate the expectation, we obtain
\begin{equation}
\label{eq:dF_quadratic_approx}
F'([\mu^n]^\theta)(\cdot) \approx g(\cdot) + \frac{1}{m} \sum_{j = 1}^m \Big( \frac{1}{n} \sum_{i = 1}^n \Psi(\theta_i)(x_j) - \psi^\star(x_j) \Big)\Psi(\cdot)(x_j) .
\end{equation}
This gives us an expression of the Boltzmann distribution that we can approximate numerically in the finite particle case, by using $[\mu^n_k]^\theta$ for large $k$, in place of $[\rho_\infty]^\theta$ in $\exp(-\beta F'([\rho_\infty]^\theta))$.

We rerun the same experiment described above, with $n = 200$, $n_0 = 20$, and in lower dimension $d = 2$, so that we can visualize the distributions, and compare the empirical and theoretical marginals at the end of training. The result is shown in Figure~\ref{fig:distributions}, where the empirical marginals (scatter plot) appear to be consistent with the numerical approximation of the Boltzmann distribution (heat map).

\begin{figure}[h]
\centering
\begin{subfigure}{0.34\textwidth}
\includegraphics[width=\textwidth]{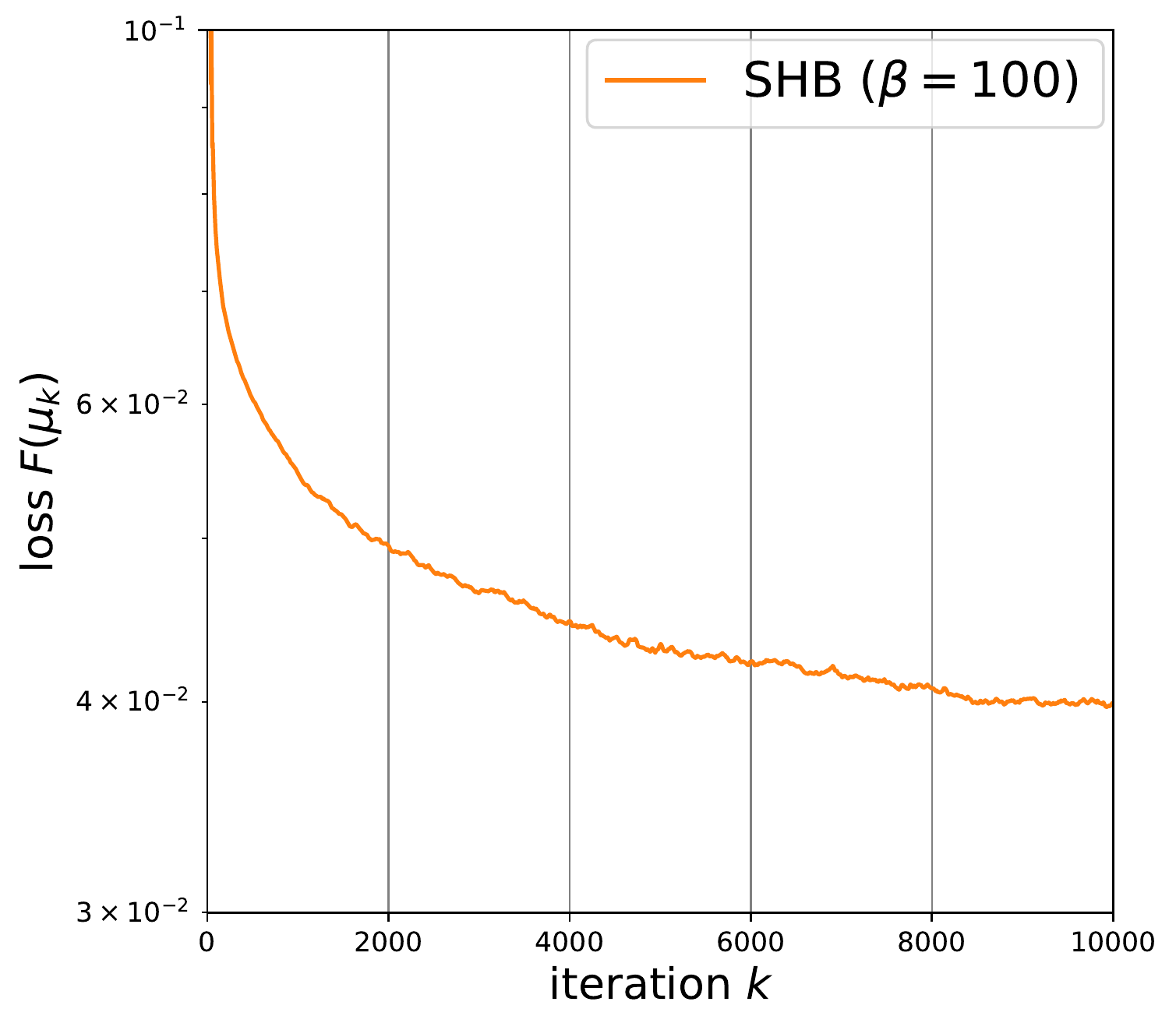}
\caption{Loss $F(\mu^{n}_k)$}
\end{subfigure}%
\begin{subfigure}{0.31\textwidth}
\includegraphics[width=\textwidth]{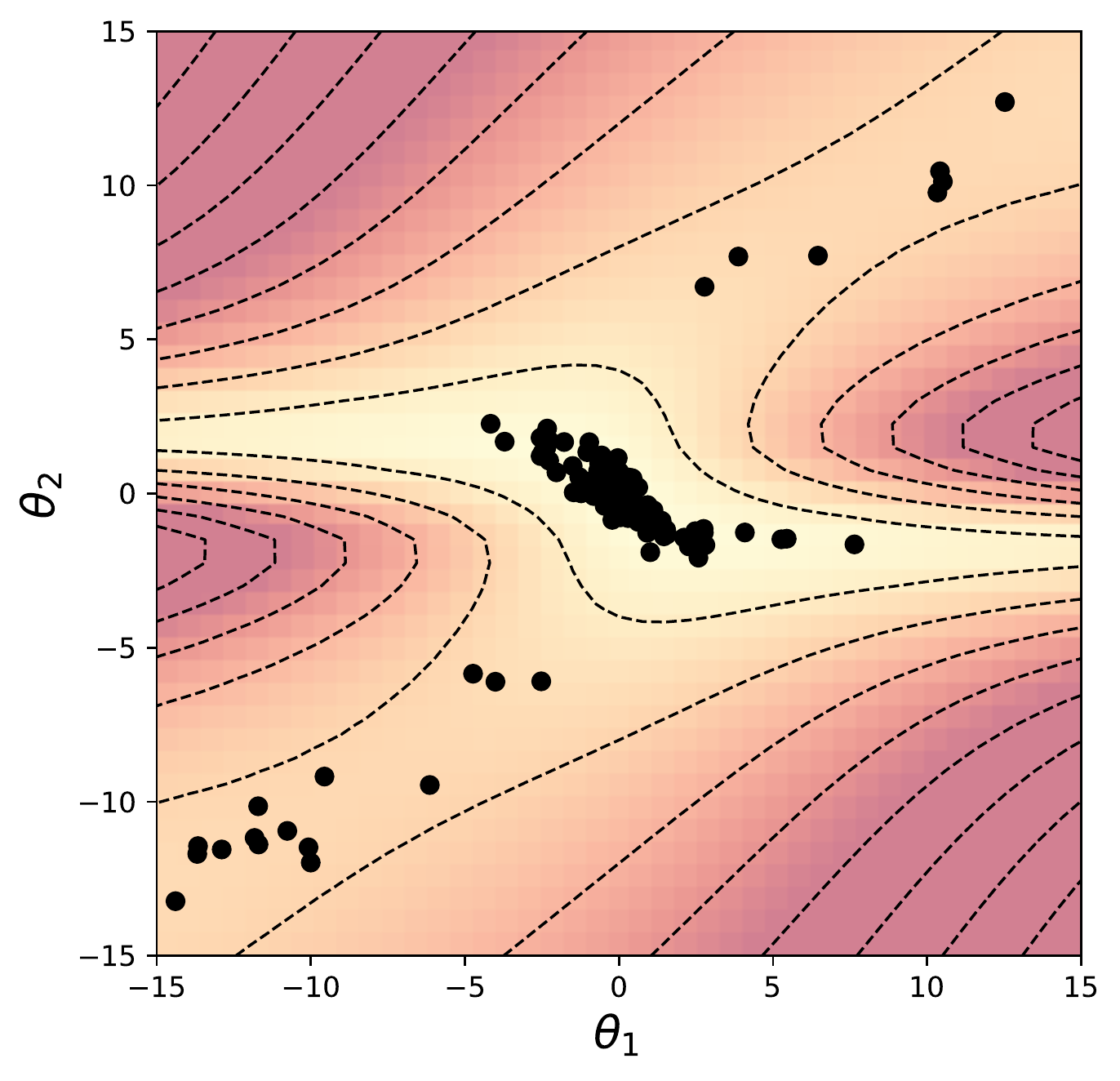}
\caption{$\theta$ marginal}
\label{fig:distributions_theta}
\end{subfigure}%
\begin{subfigure}{0.31\textwidth}
\includegraphics[width=\textwidth]{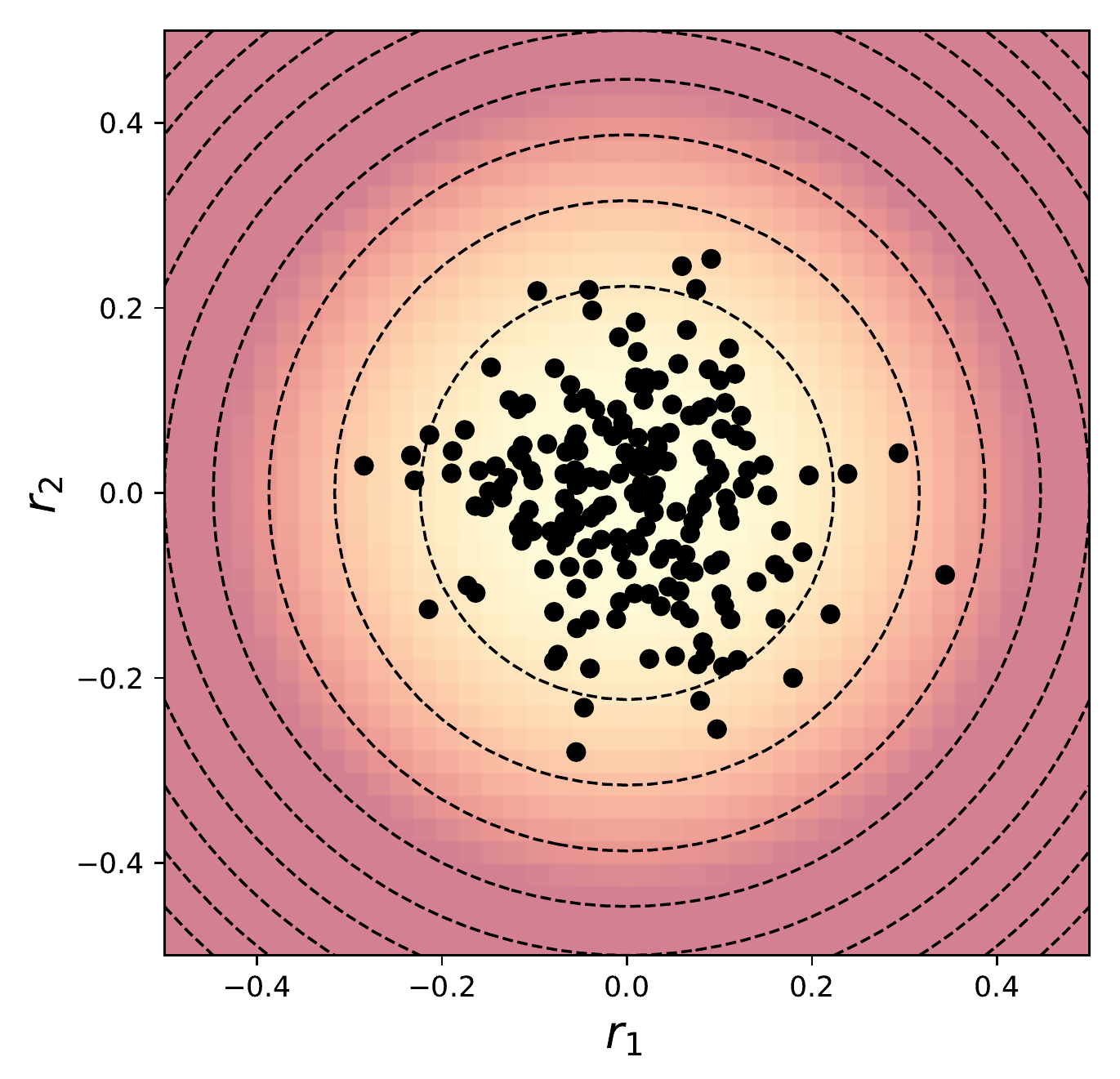}
\caption{$r$ marginal}
\end{subfigure}%
\caption{Illustration of the limiting distribution under stochastic heavy ball dynamics. The loss value as a function of iteration number is shown on the left. The middle and the right plots show the marginal distributions of position $\theta$ and velocity $r$, at the last iteration $k = 10^5$ (scatter plot). The heat map shows a numerical approximation of the theoretical limiting distributions according to Theorem~\ref{thm:decomposition}. The level sets represent the log of the density of the marginals, i.e. $-\beta F'(\mu_k^n)(\theta)$, and $-\beta |r|^2/2$ respectively.}
\label{fig:distributions}
\end{figure}

\subsection{Illustration of the interaction potential}
Finally, we illustrate the interpretation of the learning dynamics as interacting particles. One can view the dynamics of the network parameters $(\theta_i)_{i = 1, \dots, n}$ as evolving in a static potential given by the loss function $f(\theta_1, \dots, \theta_n) = F\left(\frac{1}{n} \sum_{i = 1}^n \delta_{\theta_i}\right)$, defined on $\Theta^n$. But because $\nabla_{\theta_j} f(\theta_1, \dots, \theta_n) = \frac{1}{n} \nabla F'\left(\frac{1}{n} \sum_i \delta_{\theta_i}\right)(\theta_j)$ (see Appendix~\ref{app:gradient}), in fact each of the $n$ particles is subject to the \emph{same time-varying potential} $F'(\mu^n): \Theta \to \Rbb$, where $\mu^n = \frac{1}{n} \sum_{i = 1}^n \delta_{\theta_i}$ is the empirical distribution. The potential at any time depends on the joint distribution of particles at that time, but as the distribution converges, the interaction potential also converges. To illustrate this, we plot in Figure~\ref{fig:interaction_potential} the evolution of $F'(\mu^n_k)$ as the step $k$ increases.

\begin{figure}[h]
\centering
\begin{subfigure}{0.25\textwidth}
\includegraphics[width=\textwidth]{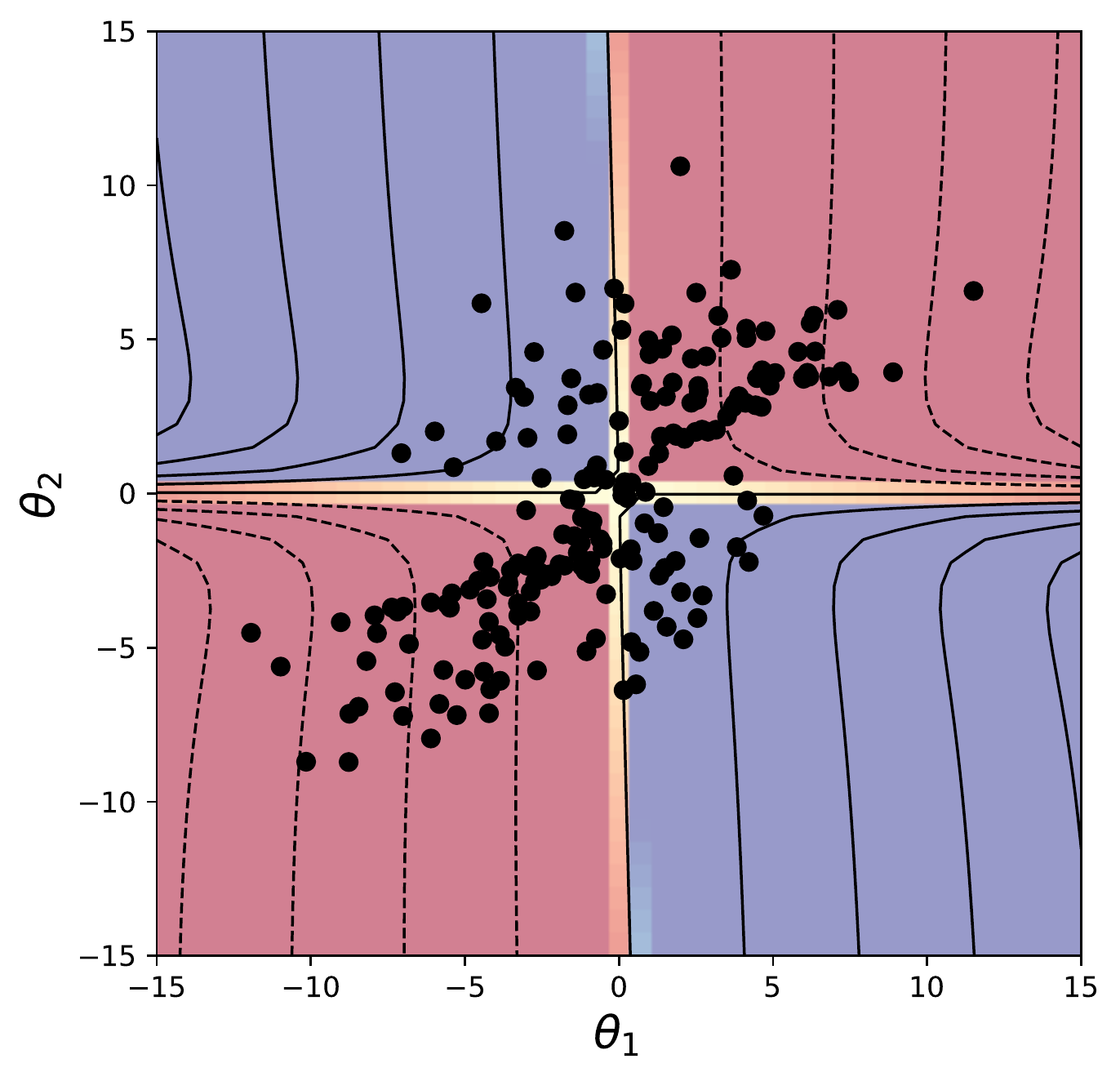}
\caption{$k = 10$}
\end{subfigure}%
\begin{subfigure}{0.25\textwidth}
\includegraphics[width=\textwidth]{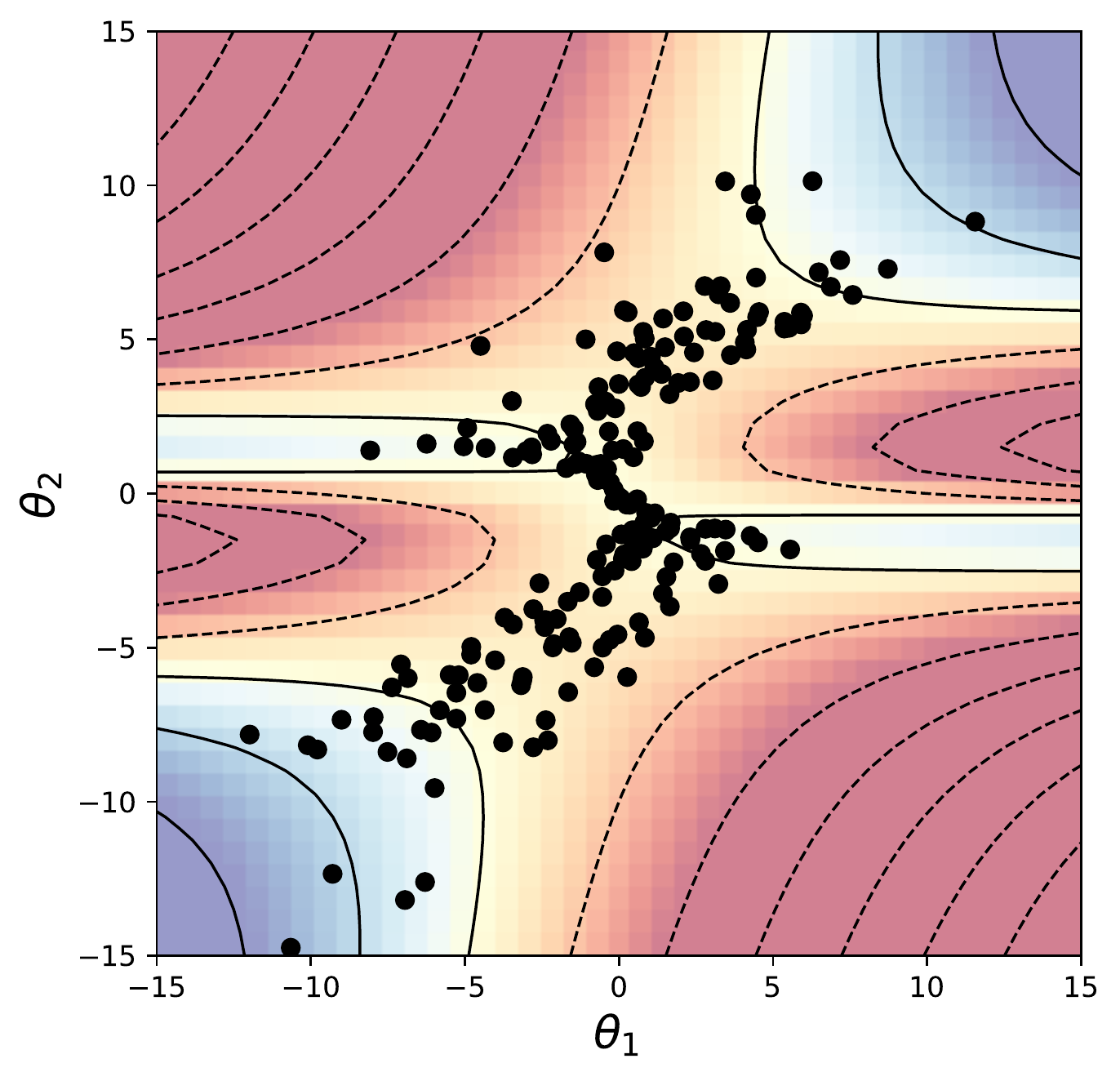}
\caption{$k = 100$}
\end{subfigure}%
\begin{subfigure}{0.25\textwidth}
\includegraphics[width=\textwidth]{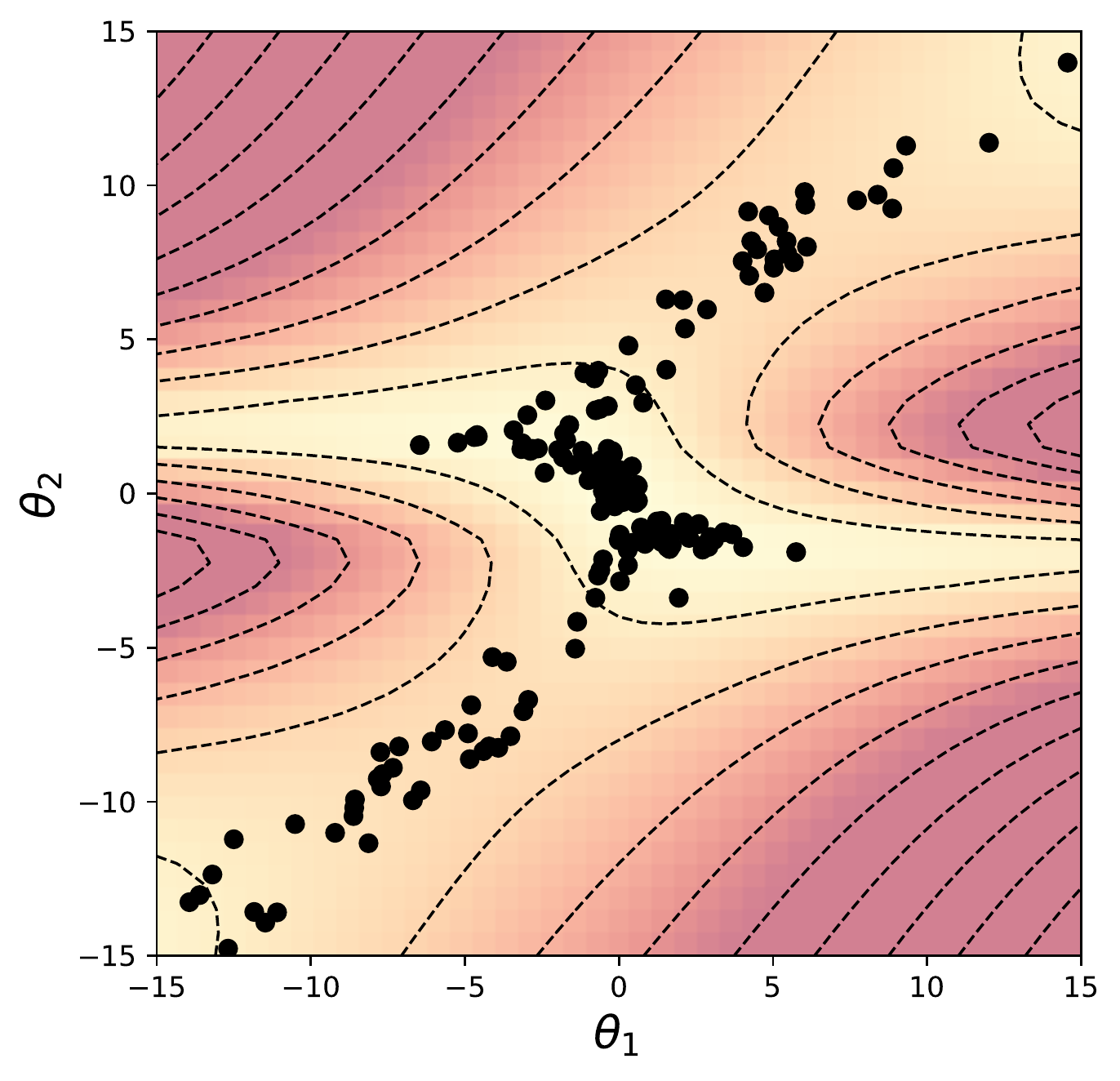}
\caption{$k = 1000$}
\end{subfigure}%
\begin{subfigure}{0.25\textwidth}
\includegraphics[width=\textwidth]{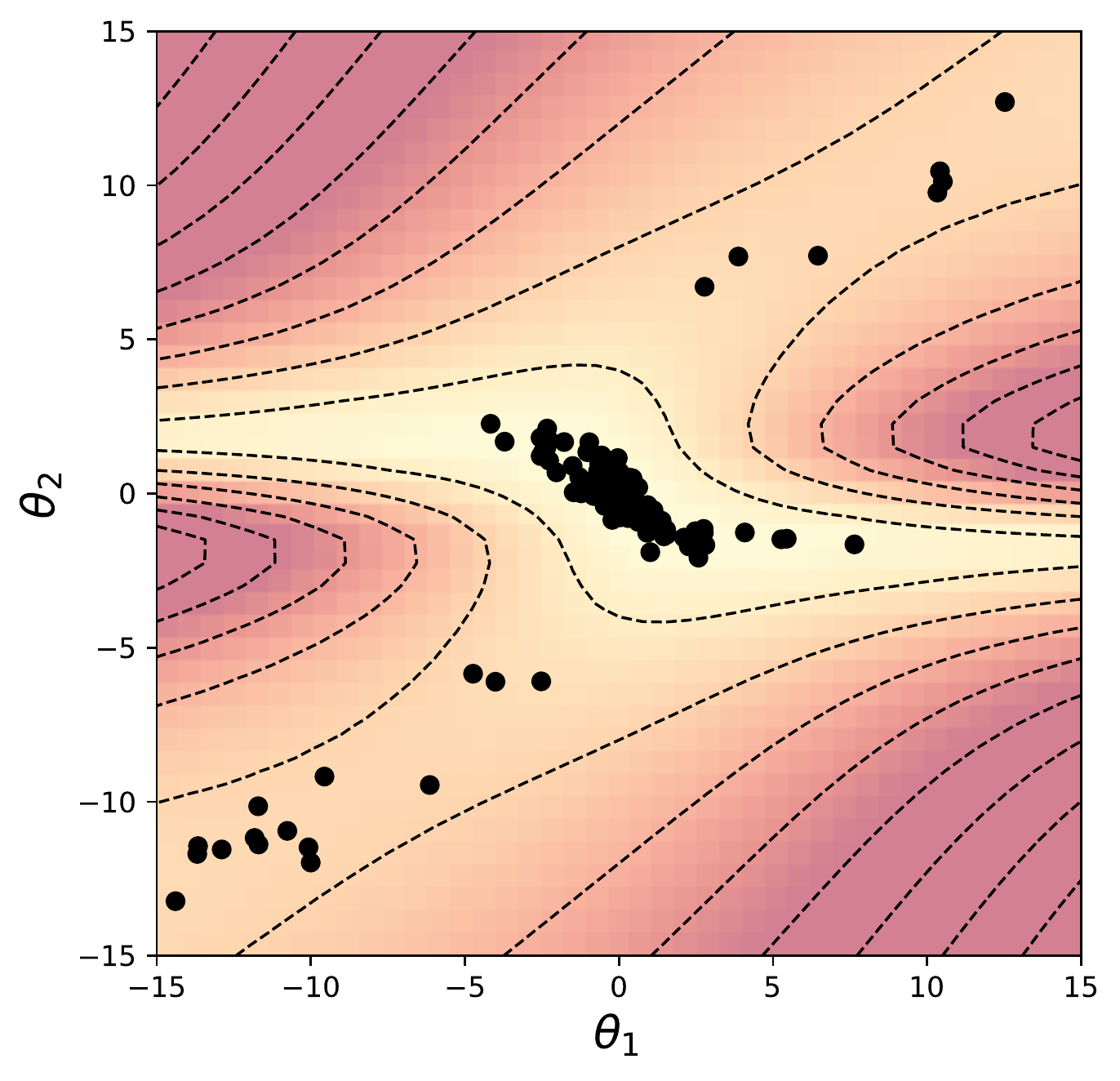}
\caption{$k = 10000$}
\end{subfigure}%
\caption{Evolution of the interaction potential $F'(\mu_k^n)$ as $k$ increases.}
\label{fig:interaction_potential}
\end{figure}

\section{Concluding remarks}
We studied the stochastic heavy ball dynamics in the mean field limit, and established convergence to global minimizers. This is, to our knowledge, the first global convergence guarantee for second-order dynamics in this setting. Though the result is asymptotic, numerical experiments on synthetic problems suggest that the convergence occurs for networks of reasonably small size.

There are several possible directions to investigate quantitative results. For example, hypocoercivity~\cite{villani2009hypocoercivity} is concerned with the study of the rate of convergence of $\rho_t$ to its limiting distribution, and while the theory is in its early development for the nonlinear case, we believe the techniques can be adapted under additional assumptions on $F$. A second direction is the study of fluctuations of solutions, which quantifies the convergence of $\mu_t^n$ to the mean field limit $\mu_t$ as the number of particles $n \to \infty$, as was done in~\cite{sirignano2018mean,rotskoff2018parameters} for gradient flow with quadratic loss. A third direction is to study the generalization properties of the limit. In the gradient flow case, this was investigated for instance by~\cite{chizat2020implicit} for the logistic loss.

We believe our results can be generalized to a broader family of second-order dynamics, including Nesterov's method. One technical challenge in doing so is that the dynamics has a time-dependence due to the damping coefficient $\gamma_t$, which may require using a different Lyapunov functional.

Finally, the question of convergence for the noiseless second-order dynamics remains unsettled and requires further investigation. In general, without diffusion, there may exist stationary points that are not global minimizers (even in the mean field limit). However, one can hope to prove, under suitable assumptions on $F$, that such stationary points are repulsive, as was done in~\cite{chizat2018global} for gradient flow.

\newpage
\small
\bibliographystyle{plain}
\bibliography{second_order}

\newpage

\appendix
\normalsize
\section{Summary of notations}
\begin{table}[h]
\centering
\renewcommand{\arraystretch}{1.2}
\begin{tabular}{ l|p{10cm} }
$\mathcal{T}\Theta$ & The tangent bundle of $\Theta$\\ 
$\Mcal(\mathcal{T}\Theta)$ & Set of probability distributions over $\Tcal\Theta$ \\
$\Mac(\mathcal{T}\Theta)$ & Set of probability distributions over $\Tcal\Theta$ that are absolutely continuous w.r.t. the Lebesgue measure $\differential\theta\differential r$, where $(\theta,r)\in\Tcal\Theta$ \\
$\Pcal(\mathcal{T}\Theta)$ & Set of joint probability density functions (PDFs) over $\Tcal\Theta$ \\
\hline
$\Fcal$ & Hilbert space of functions from $\mathbb{R}^{d}$ to $\mathbb{R}$\\
$\braketf{\phi}{\psi}$ & $\mathbb E_{(x,y)} [\phi(x, y)\psi(x, y)]$ for $\phi, \psi \in \Fcal$ \\ 
$\|\phi\|_\Fcal$ & Hilbert norm of $\phi \in \Fcal$ \\
\hline
$\dotp{\theta}{\theta'}$ & Dot product of two vectors $\theta, \theta' \in \Rbb^d$ \\
$|\theta|$ & Euclidean norm of a vector $\theta \in \Rbb^d$ \\
\hline
$\dotp{f}{g}$ & $\int_{\Tcal\Theta} f(\theta,r) g(\theta,r) \differential\theta\differential r$, for $f, g : \Tcal\Theta \mapsto \Rbb$ \\ 
$\braketd{u}{v}$ & $\int_{\Tcal\Theta} \dotp{u(\theta,r)}{v(\theta,r)} \differential\theta\differential r$, for two vector fields $u,v : \Tcal\Theta \mapsto \Rbb^d$ \\
$\|f\|_p$ & $L^p$ norm of $f : \Tcal\Theta \mapsto \Rbb$ \\
$[\rho]^\theta$ & $\theta$-marginal density function of the joint density function $\rho(\theta,r)$\\
\hline
$\nabla$ & Gradient\\
$\hess\left(\cdot\right)$ & Hessian\\
$\Delta$ & Laplacian 
\end{tabular}
\renewcommand{\arraystretch}{1}
\end{table}

We sometimes put a subscript to the gradient, Hessian and Laplacian operators, to clarify these operators are acting w.r.t. which variable. We often omit the subscripts if the operator is taken w.r.t. the full vector, and not w.r.t. a sub-vector.

\section{Gradient in parameter space and differential in distribution space}
\label{app:gradient}
Recall that the risk functional is $R : \Fcal \to \Rbb_+$. Let us define the following functions:
\begin{itemize}
\item The objective function in parameter space:
\[
f(\theta_1, \dots, \theta_n) := F\left(\frac{1}{n}\sum_{i = 1}^n \delta_{\theta_i}\right) = R\left(\frac{1}{n} \sum_{i=1}^n \Psi(\theta_i)\right) + \frac{1}{n}\sum_{i = 1}^n g(\theta_i).
\]
\item The objective functional in distribution space: $F(\mu) := R(\braket{\Psi}{\mu}) + \braket{g}{\mu}$.
\end{itemize}
By the chain rule, the gradient of $f$ is
\begin{equation}
\label{eq:gradient}
\nabla_{\theta_j} f(\theta_1, \dots, \theta_n) = \frac{1}{n}\braketf{R'(\textstyle{\frac{1}{n} \sum_{i = 1}^n\Psi(\theta_i)})}{\Psi'(\theta_j)} + \frac{1}{n}\nabla g(\theta_j),
\end{equation}
and the differential of $F$ is
\begin{equation}
\label{eq:dF}
F'(\mu)(\cdot) = \braketf{R'(\braket{\Psi}{\mu})}{\Psi(\cdot)} + g(\cdot).
\end{equation}

Identifying~\eqref{eq:gradient} and~\eqref{eq:dF}, we see that $n\nabla_{\theta_j} f(\theta_1, \dots, \theta_n) = \nabla F'(\mu^n)(\theta_j)$, where $\mu^n = \frac{1}{n} \sum_{i=1}^n \delta_{\theta_i}$.

Observe that the gradient is scaled by $n$, which leads to nonlinear dynamics in the mean field limit. A different scaling can lead to simpler, linearized dynamics that are referred to as lazy training~\citen{{chizat2019lazy}} or the kernel regime~\citen{{jacot2018neural,woodworth2019kernel}}. Our analysis is concerned with the fully non-linear regime.

The stochastic heavy ball dynamics in the parameter space is given by
\[
\begin{cases}
\dot \theta_i = r_i,\\
\dot r_i = -n\nabla_{\theta_i} f(\theta_1, \dots, \theta_n) - \gamma r_i + \sqrt{2\gamma\beta^{-1}}\:\differential W_t^{i},
\end{cases}
\]
where $i=1,\hdots,n$. Using $n\nabla_{\theta_i} f(\theta_1, \dots, \theta_n) = \nabla F'(\mu^n)$ in the previous equation yields~\eqref{eq:second_order_particle}.

\section{Consistency of the mean field limit}
\label{app:consistency}
\begin{proof}[Proof of Theorem \ref{thm:consistency}]
The proof of consistency follows a standard martingale argument, which we briefly sketch here. Additional details can be found in~\citen{{oelschlager1984martingale,oelschlager1985law}}.

For $i=1,\hdots,n$, let
\begin{align*}
\xtni:= \begin{pmatrix} \theta^i_t \\ r^i_t 
\end{pmatrix}, \quad  b(\xtni,\mutn):=\begin{pmatrix} r^i_t \\    F'([\mu_{t}^{n}]^{\theta^i_t})(\theta^i_t)- \gamma r^i_t 
\end{pmatrix},\quad 
\sigma(\xtni,\mutn):=\sqrt{2\beta^{-1}\gamma} \begin{pmatrix}
0_{d \times d} \\ I_{d\times d} \end{pmatrix},
\end{align*}
and consider the system of It\^{o} stochastic differential equations:
\begin{align}
\diff\xtni & = 
b(\xtni,\mutn) \: \diff t
+ \sigma(\xtni,\mutn) \:\diff W^i_t,
\end{align}
where $\diff W^i_t$, for each $i=1,\hdots,n$, is the standard Wiener process in $\Tcal\Theta$.

For any compactly supported test function $\varphi\in C^2_{b}(\mathcal{T}\Theta)$, i.e., the space of all bounded continuous functions $\varphi : \mathcal{T}\Theta \mapsto \mathbb{R}$ with bounded continuous partial derivatives of first and second order, we want to describe the time evolution of the quantity
\begin{align}
    \langle \varphi,\mu_{t}^n \rangle= \frac{1}{n} \sum_{i=1}^{n} \varphi(X_t^{i}).
\end{align}
Using It\^o's rule, we have 
\begin{align*}
     \diff \varphi(\xtni) = 
     L_{\mutn}\varphi(\xtni)\: \diff t+ \nabla \varphi^{\top}(\xtni)  \sigma(\xtni,\mutn) \: \diff W_t^i,
\end{align*} 
wherein the infinitesimal generator $L$ is defined as
\begin{align}
    L_\mu\varphi(x) := \langle b(x,\mu), \nabla_x \varphi(x) \rangle + \frac{1}{2}\tr\left(\sigma\sigma^{\top}(x,\mu){\rm{Hess}}(\varphi) \right).
\label{L}    
\end{align}
Therefore,
\begin{align} \label{PDEfinite}
    \diff \langle \varphi,\mutn \rangle &= \frac{1}{n} \sum_{i=1}^{n}\diff \varphi(\xtni) \nonumber \\ 
    & = \langle L_{\mutn}\varphi,\mutn   \rangle \:\diff t + \frac{1}{n} \sum_{i=1}^{n} \nabla \varphi^{\top}(\xtni) \sigma(\xtni,\mutn) \: \diff W_t^i \nonumber \\ 
    & := \langle L_{\mutn}\varphi,\mutn   \rangle \:\diff t + \diff M^n_t,
\end{align}
where $M^n_t$ is a local martingale. Since $\varphi \in C^{2}_b(\mathcal{T}\Theta)$, we have  $\lvert\nabla \varphi^{\top} \sigma \lvert \leq  \sqrt{2\beta^{-1}\gamma}\lvert \nabla_r \varphi  \rvert \leq C$ uniformly for some $C>0$. Notice that the quadratic variation of the noise term in (\ref{PDEfinite}) is
\begin{align*}
    [M^n_t]= \frac{1}{n^2} \sum_{i=1}^{n}\int_{0}^t \lvert \nabla \varphi^{\top}(X_s^{i}) \sigma(X_s^{ i},\mu_{s}^n)\rvert^2  \: \diff s  \leq \frac{tC^2}{n},
\end{align*}
and by Doob's martingale inequality, we deduce that 
\begin{align}
\mathbb{E}\left (\sup_{t\leq T}  M^n_t \right)^2 \leq \mathbb{E}\left( \sup_{t\leq T}  (M^n_t ) ^2\right) \leq 4 \mathbb{E}  \left( (M^n_t)^2 \right)  \leq 4  \mathbb{E} ([M^n_t])  \leq   \frac{4tC^2}{n}.  
\end{align}
So as $n \rightarrow \infty$, the noise term in (\ref{PDEfinite}) converges to zero in probability, and we get a deterministic evolution equation.

Next, we argue that sequence $\left\{ (\mutn)_{t>0}\right\}_{n=1}^{\infty}$ of measure-valued stochastic processes converges to some probability measure-valued limiting process $(\mu_t)_{t>0}$ as $n \rightarrow \infty$. To this end, we take $\left\{ (\mutn)_{t>0}\right\}_{n=1}^{\infty}$ to be the (random) elements of $\Omega=C([0,\infty),\mathcal{M}(\mathcal{T}\Theta))$, the set of continuous functions from $[0,\infty)$ into $\mathcal{M}(\mathcal{T}\Theta)$ endowed with the topology of weak convergence.
Following~\citen{{oelschlager1984martingale,oelschlager1985law}}, it can be shown that the sequence $\mathbb{P}_n$ of probability measures on $\Omega$ induced by the processes $\left\{ (\mutn)_{t>0}\right\}_{n=1}^{\infty}$ weakly converges (along a subsequence) to some $\mathbb{P}$, where $\mathbb{P}$ is the measure induced by the limiting process $(\mu_t)_{t>0}$. By Skorohod's representation theorem \citen{[Theorem 6.7]{billingsley2013convergence}}, the sequence $\left\{ (\mutn)_{t>0}\right\}_{n=1}^{\infty}$ converges $\mathbb{P}$-almost surely to $(\mu_t)_{t>0}$. Since the martingale term in (\ref{PDEfinite}) vanishes as $n \rightarrow \infty$, we obtain
\begin{align}
    \diff \langle \varphi,\mu_t \rangle = \langle L_{\mu}\varphi,\mu_t \rangle \: \diff t = \langle \varphi,L^{*}_{\mu}\mu_t \rangle \: \diff t,
    \label{Lstar}
\end{align}
which is valid almost everywhere for any test function $\varphi \in C^{2}_b(\mathcal{T}\Theta)$. In (\ref{Lstar}), $L^{*}$ is the adjoint operator of $L$ given by (\ref{L}), and is defined as 
\begin{align}
    L^{*}_{m}\mu(x) := -\nabla \cdot(\mu b(x,m)) + \frac{1}{2} \sum_{i,j=1}^{n} \frac{\partial^2}{\partial x_ix_j}(\mu \sigma\sigma^{\top}(x,m))_{ij}.
\end{align}
This shows that $\mu_t$ is almost surely a weak solution to the nonlinear Fokker-Planck PDE~\eqref{eq:PDE_second_order}.
\end{proof}

\section{Variations and bounds on the free energy}
\label{app:lyap}

This section provides the details of the proofs in Section~\ref{sec:LyapunovFunctional}, and additional bounds that are used in the proofs of the main results.

\subsection{Proof of Lemma~\ref{lem:variation}}
\begin{proof}
By assumption, $\rho_t$ satisfies the continuity equation $\partial \rho_t = -\nabla \cdot (\rho_t v_t)$, thus
\begin{align*}
\partial_t \Vcal(\rho_t)
&= \dotp{\Vcal'(\rho_t)}{\partial_t \rho_t} & \text{by the chain rule}\\
&= \dotp{\Vcal'(\rho_t)}{-\nabla \cdot (\rho_t v_t)} &\text{by the continuity equation}\\
&= \braketd{\nabla \Vcal'(\rho_t)}{\rho_t v_t},
\end{align*}
where the last equality follows by duality of the gradient and divergence operators $\nabla$ and $\nabla \cdot$, in the following sense: if $f : \Tcal\Theta \to \Rbb$ is a differentiable scalar function and $G: \Tcal\Theta \to \Rbb^d$ is a vector field, then
\begin{equation}
\label{eq:duality}
\braketd{\nabla f}{G} + \braket{f}{\nabla \cdot G} = 0.
\end{equation}
The inner product in the first summand above is for vector fields whereas the same in the second summand is for scalar-valued functions.
\end{proof}

\subsection{Lyapunov function in the single particle case}
This section highlights a connection between Lyapunov functions for the single particle case, and Lyapunov functionals for the mean-field dynamics. To simplify the notation, let $\xi = (\theta, r)$ denote a position-velocity pair. Lemma~\ref{lem:variation} states that if $\rho_t \in \Pcal(\Tcal\Theta)$ solves the continuity equation $\partial_t \rho_t = -\nabla \cdot (\rho_t v(\rho_t))$, then the time derivative of a functional $\Vcal(\rho)$ along the solution trajectory $\rho_t$ is given by
\begin{equation}
\label{eq:variation_mean_field}
\frac{\diff}{\diff t} \Vcal(\rho_t) = \braket{\nabla \Vcal'(\rho_t)}{\rho_t v(\rho_t)}_* = \int_{\Tcal\Theta}\braket{\nabla \Vcal'(\rho_t)(\xi)}{v(\rho_t)(\xi)} \rho_t(\xi) \differential \xi.
\end{equation}
In the single particle case, if $\xi_t \in \Tcal\Theta$ solves the differential equation $\dot \xi_t = v(\xi_t)$ for a vector field $v$, then the time derivative of a function $V(\xi)$ along $\xi_t$ is, by the chain rule,
\begin{equation}
\label{eq:variation_particle}
\frac{\diff}{\diff t} V(\xi_t) = \braket{\nabla V(\xi_t)}{v(\xi_t)}.
\end{equation}
Comparing the two expressions, we see that~\eqref{eq:variation_mean_field} can be viewed as an integral version of~\eqref{eq:variation_particle}. This connection is particularly simple in the linear case with no interaction: suppose $\Vcal(\rho) = \braket{V}{\rho}$ for a differentiable function $V: \Tcal\Theta \to \Rbb$, and the vector field $v$ does not depend on $\rho$. Then the following holds:

\emph{If $\braket{\nabla V(\xi)}{v(\xi)} \leq 0$ for all $\xi$, then $\Vcal$ is non-increasing along $\rho_t$, and $V$ is non-increasing along~$\xi_t$.}

In other words, the same sufficient condition describes Lyapunov functions for $\xi_t$ and Lyapunov functionals for $\rho_t$. In the nonlinear case, the condition becomes:

\emph{If $\braket{\nabla \Vcal'(\rho)(\xi)}{v(\rho)(\xi)} \leq 0$ for all $\rho$ and all $\xi$, then $\Vcal$ is non-increasing along $\rho_t$, and $\Vcal'(\rho)$ is non-increasing along the solution to $\dot \xi_t = v(\rho)(\xi_t)$.}

In this case, the condition describes a \emph{family} of single-particle dynamics $v(\rho)$ and corresponding Lyapunov functions $\Vcal'(\rho)$, where the family is indexed by $\rho$.

We examine the case of the noiseless heavy ball dynamics as an example. In this case, we have
\begin{align}
v_{\text{HB}}(\rho)(\theta, r) &= \vect{r \\ -\nabla F'(\rho)(\theta) - \gamma r}, \label{eq:hb_v}\\
\Vcal_{\text{HB}}(\rho)(\theta, r) &= F(\rho) + \braket{\frac{1}{2}|r|^2}{\rho}, \label{eq:hb_V} \\
\Vcal_{\text{HB}}'(\rho)(\theta, r) &= F'(\rho)(\theta) + \frac{1}{2}|r|^2, \label{eq:hb_dV}
\end{align}
corresponding to equations~\eqref{eq:PDE_second_order},\eqref{eq:free_energy} without diffusion.

Viewed as a single-particle dynamics, $v_\text{HB}(\rho)(\cdot)$ describes the damped nonlinear oscillator with potential $F'(\rho)(\cdot)$. It is well-known from the optimization literature that~\eqref{eq:hb_dV} is a Lyapunov function for the dynamics~\eqref{eq:hb_v}, see, e.g.,~\citen{{gadat2018stochastic}}. This fact can be easily verified: for all $\theta, r$,
\begin{align}
\braket{\nabla\Vcal_{\text{HB}}'(\rho)(\theta, r)}{v_{\text{HB}}(\rho)(\theta, r)}
&= \braket{\vect{\nabla F'(\rho)(\theta) \\ r}}{\vect{r \\ -\nabla F'(\rho)(\theta) - \gamma r}} \notag \\
&= -\gamma |r|^2 \leq 0.\notag
\end{align}
Therefore, that $\Vcal_{\text{HB}}$ is a Lyapunov functional for the mean-field dynamics is a simple consequence of the single particle case. Proposition~\ref{prop:lyap_decreasing} is an extension of this fact to the case with diffusion.


\subsection{Time-derivative of the free energy}
\label{app:details}
\begin{proof}[Proof of Proposition~\ref{prop:lyap_decreasing}]
From the expression of the free energy $\Ecal(\rho) = F([\rho]^\theta) + \braket{\frac{1}{2}|r|^2}{\rho} + \frac{1}{\beta}\braket{\log \rho}{\rho}$, we obtain,
\begin{equation}
\label{eq:d_free_energy}
\Ecal'(\rho)(\theta, r) = F'([\rho]^\theta)(\theta) + \frac{1}{2}|r|^2 + \beta^{-1} (1 + \log \rho(\theta, r)),
\end{equation}
and, using the shorthand $\ell_\theta := \beta^{-1}\nabla_\theta \log \rho_t$, $\ell_r := \beta^{-1}\nabla_r \log \rho_t$, we compute
\def\adjmargin{\kern-12pt}
\begin{align*}
\partial_t \Ecal(\rho_t)
&= \braketd{\nabla \Ecal'(\rho_t)}{\rho_t v(\rho_t)} & \text{\adjmargin by Lemma~\ref{lem:variation}}\\
&= \braketd{\vect{\nabla_\theta F'([\rho_t]^\theta) + \ell_\theta \\ r + \ell_r}}{\rho_t\vect{r \\ -\nabla_\theta F'([\rho_t]^\theta) - \gamma r - \gamma\ell_r}} & \text{\adjmargin by~\eqref{eq:noisy_second_order} and~\eqref{eq:d_free_energy}}\\
&= \braket{ - \gamma \dotp{r}{r} - \gamma \dotp{\ell_r}{\ell_r} - 2\gamma \dotp{r}{\ell_r} + \dotp{\ell_\theta}{r} - \dotp{\ell_r}{\nabla_\theta F'([\rho_t]^\theta)} }{ \rho_t}.
\end{align*}
We conclude by showing that the last two terms, $\braket{\braket{\ell_\theta}{r}}{\rho_t}$, and $\braket{\braket{\ell_r}{\nabla_\theta F'([\rho_t]^\theta)}}{\rho_t}$ are equal to zero. Indeed,
\begin{align*}
\int_{\Tcal\Theta} &\braket{\ell_\theta(\theta, r)}{r} \rho_t(\theta, r)\:\differential\theta\differential r\\
&= \beta^{-1} \int_{\Tcal\Theta} \braket{\nabla_\theta \log \rho_t(\theta, r)}{r}\rho_t(\theta, r)\:\differential\theta\differential r &\text{by definition of $\ell_\theta$} \\
&= \beta^{-1} \int_{\Tcal\Theta} \braket{\nabla_\theta \rho_t(\theta, r)}{r} \differential\theta\differential r \\
&= -\beta^{-1} \int_{\Tcal\Theta} \braket{\rho_t(\theta, r)}{\nabla_\theta \cdot r} \differential\theta\differential r & \text{by duality~\eqref{eq:duality}} \\
&= 0,
\end{align*}
and similarly,
\begin{align*}
\int_{\Tcal\Theta} &\braket{\ell_r(\theta, r)}{\nabla_\theta F'([\rho_t]^\theta(\theta))} \rho_t(\theta, r)\:\differential\theta\differential r \\
&= \beta^{-1} \int_{\Tcal\Theta} \braket{\nabla_r \log \rho_t(\theta, r)}{\nabla_\theta F'([\rho_t]^\theta(\theta))} \rho_t(\theta, r)\:\differential\theta\differential r &\text{by definition of $\ell_r$} \\
&= \beta^{-1} \int_{\Tcal\Theta} \braket{\nabla_r \rho_t(\theta, r)}{\nabla_\theta F'([\rho_t]^\theta)(\theta)} \differential\theta \differential r \\
&= -\beta^{-1} \int_{\Tcal\Theta} \braket{\rho_t(\theta, r)}{\nabla_r \cdot \nabla_\theta F'([\rho_t]^\theta)(\theta)} \differential\theta \differential r & \text{by duality~\eqref{eq:duality}} \\
&= 0,
\end{align*}
where the last equality is due to the fact $F'([\rho_t]^\theta)$ does not depend on $r$.
\end{proof}

\subsection{Additional bounds on the entropy and free energy}
\label{app:bounds}
We recall the expression of the free energy:
\begin{align*}
\Ecal(\rho) &= F([\rho]^\theta) + \braket{\frac{1}{2}|r|^2}{\rho} + H(\rho)\\
&= F_0([\rho]^\theta) + \braket{g(\theta) + \frac{1}{2}|r|^2}{\rho} + H(\rho),
\end{align*}
where $H(\rho) := \braket{\log \rho}{\rho}$ is the negative entropy, $F_0(\rho) = R(\braket{\Psi}{\rho})$ is the unregularized risk, and $g: \Theta \to \Rbb_+$ is the regularization function.

Let $\Kcal$ be the set
\begin{equation}
\Kcal := \{\rho \in \Pcal(\Tcal\Theta) : \braket{g(\theta) + |r|^2/2}{\rho} < \infty \}.
\end{equation}
First, we provide the following lower-bound on the free energy. For $\rho\in\Pcal(\Tcal\Theta)$, we write $\log \rho = \log^{+}\rho - \log^{-}\rho$, where $\log^{+}\rho:= \max\{\log\rho,0\}$ and $\log^{-}\rho:= \max\{-\log\rho,0\}$.
\begin{proposition}
\label{prop:lyap_bounded}
Suppose that assumptions \ref{A:R_diff}-\ref{A4} hold. Then there exists a positive function $C(\alpha)$ such that for all $\rho \in \Kcal$, and all $\alpha \leq \beta$
\begin{equation}
\label{eq:free_energy_lower_bound}
\Ecal(\rho)
\geq F_0([\rho]^\theta) +(1 - \alpha/\beta)\braket{g}{\rho} - \frac{C(\alpha)}{\beta}.
\end{equation}
\end{proposition}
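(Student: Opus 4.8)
The plan is to control the negative part of the entropy, $\langle \log^- \rho, \rho\rangle$, by comparing $\rho$ against a Gibbs-type reference density built from the confining potential $g$ and the kinetic term $|r|^2/2$. Concretely, for a parameter $\alpha \le \beta$ set $\sigma_\alpha(\theta,r) := \exp(-\alpha(g(\theta)+|r|^2/2))/Z_\alpha$, where $Z_\alpha := \int \exp(-\alpha(g(\theta)+|r|^2/2))\,\diff\theta\diff r$ is finite precisely because assumption~\ref{A4} guarantees $\exp(-\beta g)$ is integrable for every $\beta>0$ (and the Gaussian-in-$r$ factor is integrable). The key identity is that the relative entropy $\langle \log(\rho/\sigma_\alpha), \rho\rangle$ is nonnegative by Jensen's inequality (equivalently, by nonnegativity of KL divergence), and it remains nonnegative whenever $\rho \in \Kcal$, so that the integrals are well-defined.

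Expanding that relative entropy gives
\[
0 \le \braket{\log \rho}{\rho} + \alpha \braket{g(\theta) + |r|^2/2}{\rho} + \log Z_\alpha,
\]
i.e. $H(\rho) \ge -\alpha\braket{g(\theta)+|r|^2/2}{\rho} - \log Z_\alpha$. Plugging this into the second expression for $\Ecal$, namely $\Ecal(\rho) = F_0([\rho]^\theta) + \braket{g(\theta) + |r|^2/2}{\rho} + \beta^{-1}H(\rho)$ (note the $\beta^{-1}$ in front of $H$, per~\eqref{eq:free_energy}), we get
\[
\Ecal(\rho) \ge F_0([\rho]^\theta) + \braket{g(\theta) + |r|^2/2}{\rho} - \frac{\alpha}{\beta}\braket{g(\theta)+|r|^2/2}{\rho} - \frac{\log Z_\alpha}{\beta}.
\]
Since $\alpha \le \beta$ we have $1 - \alpha/\beta \ge 0$, and since $|r|^2/2 \ge 0$ and $g \ge 0$, we may drop the $|r|^2/2$ contribution from the nonnegative-coefficient term $(1-\alpha/\beta)\braket{g(\theta)+|r|^2/2}{\rho} \ge (1-\alpha/\beta)\braket{g}{\rho}$. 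Setting $C(\alpha) := \log Z_\alpha$ (or its positive part, if one wants $C(\alpha) > 0$ as stated — one can always enlarge it) yields exactly~\eqref{eq:free_energy_lower_bound}.

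The main technical point to be careful about is the finiteness and well-definedness of all the integrals: one must check that for $\rho \in \Kcal$ the quantity $\braket{\log\rho}{\rho}$ is well-defined in $(-\infty, +\infty]$ (the negative part $\langle \log^-\rho,\rho\rangle$ is finite precisely because $\rho \in \Kcal$ controls it through the comparison with $\sigma_\alpha$), and that the relative entropy inequality is valid in this generality — this is standard but worth stating, e.g. by writing $\rho/\sigma_\alpha = h$ and using $h\log h \ge h - 1$ pointwise, then integrating against $\sigma_\alpha$; the integral $\int(h-1)\sigma_\alpha = 0$ since both $\rho$ and $\sigma_\alpha$ are probability densities. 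The only other item is to confirm $Z_\alpha < \infty$ for every $0 < \alpha \le \beta$, which is immediate from~\ref{A4} applied with the constant $\alpha$. I do not anticipate a genuine obstacle here; the whole argument is a single application of the Gibbs variational principle, and assumptions~\ref{A:R_diff}--\ref{A4} are invoked only to ensure $g$ is confining with integrable exponential and that $F_0$, hence $\Ecal$, is well-defined along $\Kcal$.
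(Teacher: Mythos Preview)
Your proof is correct and in some ways cleaner than the paper's, but it takes a genuinely different route. The paper works pointwise: it proves the elementary inequality $x\log^- x \le c(x + e^{-c})$ for $c \ge 1$, applies it with the function $c(\theta,r) = 1 + \alpha(g(\theta) + |r|^2/2)$, and obtains a direct bound on $\braket{\log^- \rho}{\rho}$ alone (equation~\eqref{eq:free_energy_lower_bound_proof}), which is then fed into the decomposition $\Ecal = F_0 + \braket{g + |r|^2/2}{\rho} + \beta^{-1}(\braket{\log^+\rho}{\rho} - \braket{\log^-\rho}{\rho})$. You instead invoke the Gibbs variational principle (nonnegativity of $D_{\mathrm{KL}}(\rho\Vert\sigma_\alpha)$) with the reference density $\sigma_\alpha \propto \exp(-\alpha(g + |r|^2/2))$, which gives the entropy lower bound $H(\rho) \ge -\alpha\braket{g + |r|^2/2}{\rho} - \log Z_\alpha$ in one line and yields the explicit constant $C(\alpha) = \log Z_\alpha$.

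Your argument is more conceptual and produces a tidier constant. The paper's approach has one practical advantage: the intermediate inequality~\eqref{eq:free_energy_lower_bound_proof} bounds $\braket{\log^-\rho}{\rho}$ in isolation, and this is reused verbatim in the proof of Proposition~\ref{prop:BoundedQuantitites} to show that $\braket{\log^+\rho_t}{\rho_t}$ stays bounded along trajectories. Your KL argument bounds $H(\rho) = \braket{\log^+\rho}{\rho} - \braket{\log^-\rho}{\rho}$ as a whole, so to recover that corollary you would need to extract the $\log^-$ bound separately (which is easy: the same comparison against $\sigma_\alpha$ does it). No genuine gap; both proofs go through under \ref{A4}.
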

\begin{proof}
We can decompose $\Ecal$ into
\begin{equation}
\label{eq:free_energy_decomp}
\Ecal(\rho) = F_0([\rho]^\theta) + \braket{g(\theta) + |r^2|/2}{\rho} + \frac{1}{\beta}\big(\braket{\log^+ \rho}{\rho} - \braket{\log^- \rho}{\rho}\big).
\end{equation}
We focus on bounding the last term. First, following~\citen{[Prop. 2.3]{bouchut1995long}}, observe that for any constant $c \geq 1$, we have
\[
x \log^- x \leq c (x + e^{-c}) \quad\text{for all $x \geq 0$}.
\]
The inequality is trivial for $x \geq 1$ since the LHS is $0$, by definition. For $x \in [0, 1]$, this can be verified by noting that the difference $d(x) := x\log^- x - c(x + e^{-c})$ attains its maximum at $x = e^{-c-1}$, and $d(e^{-c-1}) \leq 0$. Applying the previous inequality with a function $c : \Tcal\Theta \to [1, +\infty)$, we have
\[
\braket{\log^- \rho}{\rho} \leq \braket{c(\theta, r)}{\rho(\theta, r) + e^{-c(\theta, r)}}.
\]
Let $\alpha > 0$ and take $c(\theta, r) := 1 + \alpha (g(\theta) + \frac{|r|^2}{2})$, which is $\geq 1$ since the regularizer $g$ is non-negative by assumption. Then
\begin{equation}
\label{eq:free_energy_lower_bound_proof_0}
\braket{\log^- \rho}{\rho} \leq 1 + \alpha \braket{g(\theta)+|r|^2/2}{\rho}
+ \int_{\Tcal\Theta} (1+\alpha g(\theta) + \alpha \frac{|r|^2}{2}) e^{-1 -\alpha g(\theta)-\alpha \frac{|r|^2}{2}} \dtheta\dr.
\end{equation}
We shall prove that the last term, which we denote by $C(\alpha) := \int_{\Tcal\Theta} (1 + \alpha g + \alpha \frac{|r|^2}{2})e^{-1 - \alpha g(\theta) - \alpha \frac{|r|^2}{2}}$, is finite by virtue of assumption \ref{A4}. Indeed, the assumption guarantees that $e^{-\alpha g}$ is integrable. It also follows that $g e^{-\alpha g}$ is integrable: indeed, for any $\epsilon \in (0, \alpha)$, using the inequality $1 + \epsilon g \leq e^{\epsilon g}$, we can write that $ge^{-\alpha g} \leq \frac{e^{-\alpha g + \epsilon g} - e^{-\alpha g}}{\epsilon}$, and the upper-bound is integrable by assumption~\ref{A4}.

To summarize, we obtain
\begin{equation}
\label{eq:free_energy_lower_bound_proof}
\braket{\log^- \rho}{\rho} \leq \alpha \braket{g(\theta) + |r|^2/2}{\rho} + C(\alpha),
\end{equation}
for a finite, positive function $C(\alpha)$. Using the last inequality in~\eqref{eq:free_energy_decomp}, and the fact $\braket{\log^+ \rho}{ \rho} \geq 0$, we obtain
\begin{align*}
\Ecal(\rho)
&\geq F_0([\rho]^\theta) + \braket{g(\theta) + |r^2|/2}{\rho}(1 - \alpha / \beta) - \frac{C(\alpha)}{\beta}.
\end{align*}
Finally, taking $\alpha \leq \beta$ guarantees that the term $\braket{|r|^2}{\rho}(1-\alpha/\beta)$ is non-negative, and proves the claim~\eqref{eq:free_energy_lower_bound}.
\end{proof}

\begin{proposition}
\label{prop:BoundedQuantitites}
Let $\rho_t\in C\left([0,\infty),\Pcal(\Tcal\Theta)\right)$ be a solution to~\eqref{eq:noisy_second_order} with initial condition $\rho_{0}\in \Pcal(\Tcal\Theta)$, and suppose that $\rho_0$ satisfies assumption~\ref{A:init}.
Then for all $t \geq 0$, the quantities $\Ecal(\rho_t)$, $F_0([\rho_t]^\theta)$, $\langle g(\theta) + |r|^2/2,\rho_{t}\rangle$, $\langle\log^{+}\rho_t,\rho_t\rangle$, are bounded independently of $t$.
\end{proposition}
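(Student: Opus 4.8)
The plan is to combine the monotonicity of the free energy from Proposition~\ref{prop:lyap_decreasing} with the lower bound from Proposition~\ref{prop:lyap_bounded}, and then squeeze out the boundedness of the individual quantities. First I would note that, by Proposition~\ref{prop:lyap_decreasing}, $\partial_t \Ecal(\rho_t) = -\gamma\braket{|r + \beta^{-1}\nabla_r\log\rho_t|^2}{\rho_t} \leq 0$, so $\Ecal(\rho_t) \leq \Ecal(\rho_0)$ for all $t\geq 0$. Assumption~\ref{A:init} is exactly what is needed to guarantee $\Ecal(\rho_0) < \infty$: the terms $\braket{g(\theta)+|r|^2/2}{\rho_0}$ and $\braket{\log^+\rho_0}{\rho_0}$ are finite by hypothesis, and the negative part $\braket{\log^-\rho_0}{\rho_0}$ is finite by the bound~\eqref{eq:free_energy_lower_bound_proof} (applied with, say, $\alpha = \beta$), which controls it by $\braket{g(\theta)+|r|^2/2}{\rho_0} + C(\beta)$; and $F_0([\rho_0]^\theta)$ is finite since $F_0'$ is uniformly bounded by~\ref{A4} (or, more directly, $F_0 \geq 0$ and $R$ is finite-valued, with the finiteness of $\int|\nabla F_0'([\rho_0]^\theta)|^2$ from~\ref{A:init} providing the needed regularity). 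So set $\Ecal_0 := \Ecal(\rho_0) < \infty$, a finite constant independent of $t$.

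Next I would invoke Proposition~\ref{prop:lyap_bounded} with a fixed choice of $\alpha$, say $\alpha = \beta/2 \leq \beta$, which gives for every $t$
\[
\Ecal_0 \;\geq\; \Ecal(\rho_t) \;\geq\; F_0([\rho_t]^\theta) + \tfrac12\braket{g}{\rho_t} - \frac{C(\beta/2)}{\beta}.
\]
Since $F_0 \geq 0$ (it is $R$ composed with something, and $R$ maps into $\Rbb_+$) and $\braket{g}{\rho_t} \geq 0$, this single inequality immediately bounds both $F_0([\rho_t]^\theta) \leq \Ecal_0 + C(\beta/2)/\beta$ and $\braket{g}{\rho_t} \leq 2\big(\Ecal_0 + C(\beta/2)/\beta\big)$, uniformly in $t$. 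To also bound $\braket{|r|^2/2}{\rho_t}$, I would instead use a version of the lower bound that retains the $|r|^2$ term: going back to the decomposition~\eqref{eq:free_energy_decomp} and applying~\eqref{eq:free_energy_lower_bound_proof} with $\alpha = \beta/2$ gives
\[
\Ecal(\rho_t) \;\geq\; F_0([\rho_t]^\theta) + \tfrac12\braket{g(\theta)+|r|^2/2}{\rho_t} - \frac{C(\beta/2)}{\beta},
\]
so $\braket{g(\theta)+|r|^2/2}{\rho_t} \leq 2\big(\Ecal_0 + C(\beta/2)/\beta\big)$, bounding the third quantity. Finally, for $\braket{\log^+\rho_t}{\rho_t}$, I would rearrange~\eqref{eq:free_energy_decomp}: $\beta^{-1}\braket{\log^+\rho_t}{\rho_t} = \Ecal(\rho_t) - F_0([\rho_t]^\theta) - \braket{g(\theta)+|r|^2/2}{\rho_t} + \beta^{-1}\braket{\log^-\rho_t}{\rho_t}$, and then bound the right-hand side above using $\Ecal(\rho_t) \leq \Ecal_0$, $F_0 \geq 0$, $\braket{g(\theta)+|r|^2/2}{\rho_t} \geq 0$, and~\eqref{eq:free_energy_lower_bound_proof} for the last term, yielding $\beta^{-1}\braket{\log^+\rho_t}{\rho_t} \leq \Ecal_0 + \tfrac12\braket{g(\theta)+|r|^2/2}{\rho_t} + C(\beta/2)/\beta$, which is finite and $t$-independent by the bound just obtained on the second moment plus regularizer.

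The main obstacle I anticipate is not any of these algebraic manipulations but the justification that $t\mapsto\Ecal(\rho_t)$ is actually differentiable (or at least absolutely continuous and non-increasing) along the solution trajectory, so that Proposition~\ref{prop:lyap_decreasing} can legitimately be integrated to conclude $\Ecal(\rho_t)\leq\Ecal(\rho_0)$ — this requires knowing enough regularity/integrability of $\rho_t$ (finiteness of the entropy and moments for $t>0$, and enough decay to justify the integration-by-parts steps in the proof of Proposition~\ref{prop:lyap_decreasing}), which is a standard but delicate point for kinetic Fokker--Planck equations and is typically handled via a regularization/approximation argument (mollify the initial data, derive the estimates for the smooth solutions with all constants depending only on $\Ecal_0$, and pass to the limit using lower semicontinuity of the entropy and the moment functionals). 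A secondary subtlety is making sure $\Ecal(\rho_0)<\infty$ under precisely the three conditions in~\ref{A:init} and no more; I would spell out that the condition $\int|\nabla F_0'([\rho_0]^\theta)(\theta)|^2\,\differential\theta<\infty$ is what ties the initial data to the regularity needed for the PDE theory invoked after~\ref{A:init}, while finiteness of $F_0([\rho_0]^\theta)$ itself follows from $R$ being real-valued on $\Fcal$ together with $\braket{\Psi}{[\rho_0]^\theta}\in\Fcal$.
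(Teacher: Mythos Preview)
Your proposal is correct and follows essentially the same approach as the paper: combine the monotonicity $\Ecal(\rho_t)\le\Ecal(\rho_0)$ from Proposition~\ref{prop:lyap_decreasing} with the control of $\braket{\log^-\rho}{\rho}$ by $\alpha\braket{g(\theta)+|r|^2/2}{\rho}+C(\alpha)$ (inequality~\eqref{eq:free_energy_lower_bound_proof}) to bound all quantities by $\Ecal(\rho_0)+\beta^{-1}C(\alpha)$. The only difference is organizational: the paper does it in a single stroke by writing
\[
F_0([\rho_t]^\theta)+\braket{g(\theta)+|r|^2/2}{\rho_t}(1-\alpha/\beta)+\beta^{-1}\braket{\log^+\rho_t}{\rho_t}\;\le\;\Ecal(\rho_t)+\beta^{-1}C(\alpha)\;\le\;\Ecal(\rho_0)+\beta^{-1}C(\alpha),
\]
which bounds all three nonnegative quantities simultaneously, whereas you first invoke Proposition~\ref{prop:lyap_bounded} (which has already discarded the $|r|^2$ and $\log^+$ terms) and then return to~\eqref{eq:free_energy_decomp}--\eqref{eq:free_energy_lower_bound_proof} to recover them; your route is slightly longer but arrives at the same place.
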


\begin{proof}
From (\ref{eq:free_energy_decomp}), we have
\[
F_0([\rho_{t}]^{\theta}) + \braket{g(\theta) + |r|^2/2}{\rho_t} + \beta^{-1}\langle\log^{+}\rho_t,\rho_t\rangle = \Ecal(\rho_t) + \beta^{-1}\langle\log^{-}\rho_t,\rho_t\rangle.
\]
The terms on the left-hand-side are non-negative. We upper bound the right-hand-side using \eqref{eq:free_energy_lower_bound_proof}, to obtain 
\begin{align}
F_0\left([\rho_{t}]^{\theta}\right) + \braket{g(\theta) + |r|^2/2}{\rho_{t}}(1 - \alpha / \beta) + \beta^{-1}\langle\log^{+}\rho_t,\rho_t\rangle \leq \Ecal(\rho_t) + \beta^{-1}C(\alpha).
\label{eq:provingboundedmeanrsquared}
\end{align}
Choosing $\alpha < \beta$, as in the proof of Proposition \ref{prop:lyap_bounded}, and using the fact $\Ecal(\rho_t)$ is a decreasing function of~$t$ (Proposition \ref{prop:lyap_decreasing}), we have
\begin{align*}
0
&\leq F_0\left([\rho_{t}]^{\theta}\right) + \braket{g(\theta) + |r|^2/2}{\rho_{t}}(1 - \alpha/\beta) + \beta^{-1}\langle\log^{+}\rho_t,\rho_t\rangle\\
&\leq \Ecal(\rho_t) + \beta^{-1}C(\alpha)\\
& \leq \Ecal(\rho_0) + \beta^{-1}C(\alpha) < \infty
\end{align*}
where the $\Ecal(\rho_0)$ is finite by virtue of assumption~\ref{A:init}. The statement follows.
\end{proof}

The following is a consequence of Propositions~\ref{prop:lyap_decreasing} and~\ref{prop:BoundedQuantitites}.
\begin{theorem}\label{thm:Barabalat}
Consider the set up in Propositions \ref{prop:lyap_decreasing} and \ref{prop:BoundedQuantitites}. Then the solution trajectory $(\rho_t)_{t\geq 0}$ for (\ref{eq:noisy_second_order}) satisfies
\begin{align}
\label{eq:dotofLyap}
\lim_{t\rightarrow\infty} \int_{\Tcal\Theta} |r + \beta^{-1}\nabla_r \log \rho_{t}|^2 \rho_{t}\:\differential\theta\differential r = 0.
\end{align}	
\end{theorem}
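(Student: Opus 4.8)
The plan is to combine Proposition~\ref{prop:lyap_decreasing}, which gives $\partial_t \Ecal(\rho_t) = -\gamma \braket{|r + \beta^{-1}\nabla_r \log \rho_t|^2}{\rho_t}$, with the uniform lower bound on $\Ecal(\rho_t)$ implicit in Proposition~\ref{prop:BoundedQuantitites} (indeed $\Ecal(\rho_t) \geq -\beta^{-1}C(\alpha)$ for any fixed $\alpha \leq \beta$, since $F_0 \geq 0$ and $\braket{g + |r|^2/2}{\rho_t}(1-\alpha/\beta) \geq 0$). Integrating the identity from Proposition~\ref{prop:lyap_decreasing} in time gives
\[
\gamma \int_0^T \braket{|r + \beta^{-1}\nabla_r \log \rho_t|^2}{\rho_t}\,\differential t = \Ecal(\rho_0) - \Ecal(\rho_T) \leq \Ecal(\rho_0) + \beta^{-1}C(\alpha),
\]
so letting $T \to \infty$ the integral $\int_0^\infty \braket{|r + \beta^{-1}\nabla_r \log \rho_t|^2}{\rho_t}\,\differential t$ is finite. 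Writing $h(t) := \braket{|r + \beta^{-1}\nabla_r \log \rho_t|^2}{\rho_t} \geq 0$, we have shown $h \in L^1([0,\infty))$.

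Integrability of a nonnegative function does not by itself force $h(t) \to 0$; one needs some regularity of $h$, which is the Barbalat-type ingredient flagged by the theorem's name. The plan is to establish that $h$ is uniformly continuous on $[0,\infty)$, or at least that it cannot have tall narrow spikes, and then invoke Barbalat's lemma: a uniformly continuous function that is integrable on $[0,\infty)$ tends to $0$. To get uniform continuity I would bound $\partial_t h(t)$ (or a suitable modulus of continuity of $h$) using the PDE~\eqref{eq:noisy_second_order} together with the uniform-in-$t$ bounds from Proposition~\ref{prop:BoundedQuantitites} on $F_0([\rho_t]^\theta)$, $\braket{g + |r|^2/2}{\rho_t}$, and $\braket{\log^+\rho_t}{\rho_t}$, along with assumption~\ref{A:grad_bounded} which keeps $\nabla F'$ in $L^\infty$. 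Concretely, $h(t)$ expands (as in the computation preceding the proof of Proposition~\ref{prop:lyap_decreasing}) into $\gamma^{-1}$ times $-\partial_t \Ecal(\rho_t)$, and one differentiates the latter, or alternatively one controls the Fisher-information-type and second-moment-type pieces of $h$ separately using parabolic regularity estimates for the kinetic Fokker--Planck equation (hypoelliptic smoothing keeps $\rho_t$ and its $r$-derivatives controlled on each compact time interval, uniformly for $t$ bounded away from $0$).

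The main obstacle is exactly this regularity step: showing $h$ has a uniform modulus of continuity (or bounded derivative) requires either an a priori estimate on $\int_0^\infty |\partial_t h(t)|\,\differential t$ or a uniform-in-$t$ bound on quantities like the $r$-Fisher information and its time derivative, and these must be extracted from the hypoelliptic structure of~\eqref{eq:PDE_second_order} rather than from energy identities alone. A cleaner route, which I would pursue if the direct differentiation gets unwieldy, is the standard trick: suppose $h(t_k) \geq \varepsilon$ along a sequence $t_k \to \infty$; using a uniform bound on how fast $h$ can change, $h \geq \varepsilon/2$ on intervals $[t_k, t_k + \delta]$ of fixed length $\delta$, contradicting $\int_0^\infty h < \infty$ once we pass to a subsequence with disjoint such intervals. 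Either way, the qualitative heart of the argument is the energy dissipation identity plus the uniform lower bound on $\Ecal$; the technical heart is the compactness/regularity needed to upgrade $h \in L^1$ to $h(t) \to 0$.
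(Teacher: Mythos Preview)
Your proposal is correct and follows essentially the same route as the paper: integrate the dissipation identity to get $h \in L^1$, then invoke Barbalat's lemma after establishing uniform continuity of $h$ (equivalently, of $G = \partial_t \Ecal$). The paper carries out the regularity step you flag as the main obstacle by bounding $|\partial_t G|$ directly via the variational formula $\partial_t G = \braketd{\nabla G'(\rho_t)}{\rho_t v(\rho_t)}$ of Lemma~\ref{lem:variation}, applying Cauchy--Schwarz, and computing the functional derivatives of the three pieces $G_1 = -\gamma\braket{|r|^2}{\rho_t}$, $G_2 = -2\gamma\braket{\dotp{r}{\ell_r}}{\rho_t}$, $G_3 = -\gamma\braket{|\ell_r|^2}{\rho_t}$ explicitly; it does not appeal to hypoelliptic smoothing as in your alternative route.
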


\begin{proof}
Propositions \ref{prop:lyap_decreasing} and \ref{prop:BoundedQuantitites} allow us to deduce that the functional $\Ecal(\rho_{t})$ given by (\ref{eq:free_energy}) has a finite limit as $t\rightarrow\infty$. Now our strategy is to prove that $G:=\partial_{t}\Ecal$ is uniformly continuous in $t$. Then, by Barbalat's lemma \citen{[Lemma 4.2]{slotine1991applied}}, the claim (\ref{eq:dotofLyap}) follows.

To prove the uniform continuity of $G$ in $t$, it suffices to show that $\partial_t G$ is upper bounded for all $t\geq 0$. First notice that
\begin{align}
|\partial_t G| = |\langle\nabla G^{\prime}(\rho_{t}),\rho_{t}v_t\rangle| = |\mathbb{E}_{\rho_{t}}\left[\nabla G^{\prime},v_t\right]| \leq \sqrt{\mathbb{E}_{\rho_t}\left[|\nabla G^{\prime}(\rho_t)|^2\right]}\:\sqrt{\mathbb{E}_{\rho_t}\left[|v_t|^2\right]},
\label{eq:CauchySchwarz}	
\end{align}
where the last inequality is due to Cauchy-Schwarz. By applying Cauchy-Schwarz again,
\begin{align}
\mathbb{E}_{\rho_t}\!\!\left[|v_t|^2\right] &= \!\int_{\Tcal\Theta}\!\left(|r|^2 + |\nabla_{\theta}F^{\prime}([\rho_{t}]^{\theta}) + \gamma r + \gamma\ell_{r}|^2\right)\rho_{t}\differential\theta\differential r \nonumber\\
&\leq \!\int_{\Tcal\Theta}\!\left((1+3\gamma^2)|r|^2 + 3\gamma^2 |\ell_{r}|^2 + 3|\nabla_{\theta}F^{\prime}([\rho_{t}]^{\theta})|^{2}\right)\rho_{t}\differential\theta\differential r.
\label{eq:boundingVectorFieldMeanSquaredMagnitude}	
\end{align}
Per Assumption~\ref{A:grad_bounded}, $\nabla_{\theta}F^{\prime}(\rho_{t}^{\theta})(\theta) \in L^{\infty}(\Theta)$, and hence $\int|\nabla_{\theta}F^{\prime}(\rho_{t}^{\theta})|^{2}\rho_{t}\differential\theta\differential r < \infty$. From Proposition \ref{prop:BoundedQuantitites}, we know that $\int |r|^2\rho_{t}\differential\theta\differential r<\infty$. Noting that $\int |\ell_{r}|^2 \rho_{t}\differential\theta\differential r = \beta^{-2}\int \frac{|\nabla_{r}\rho_t|^2}{\rho_t}\differential\theta\differential r = 4\beta^{-2}\int |\nabla_{r}\sqrt{\rho_t}|^2\differential\theta\differential r$, and that $\nabla_{r}\sqrt{\rho}\in L^{2}([0,T],\mathcal{T}\Theta)$ for any $T>0$ (see e.g., \citen{[Lemma 3.10]{soler1997asymptotic}}), we have $\int |\ell_{r}|^2 \rho_{t}\differential\theta\differential r <\infty$. Putting these together, we find that (\ref{eq:boundingVectorFieldMeanSquaredMagnitude}) is finite for all $t\geq 0$. We also note that the finiteness of $\int |\ell_{r}|^2 \rho_{t}\differential\theta\differential r = \beta^{-2}\int \frac{|\nabla_{r}\rho_t|^2}{\rho_t}\differential\theta\differential r$ implies that $\rho_{t}$ is positive almost everywhere, and that the Fisher information $\int \frac{|\nabla\rho_t|^2}{\rho_t}\differential\theta\differential r < \infty$.

To show that the other factor in the right-hand-side of (\ref{eq:CauchySchwarz}) is finite, let $G_{1}:=-\gamma\langle|r|^2,\rho_t\rangle$, $G_{2}:=-2\gamma\int\langle r,\ell_{r}\rangle\rho_{t}\differential\theta\differential r$, $G_{3}:=-\gamma\langle|\ell_r|^2,\rho_t\rangle$, and notice that 
\begin{align}
G = -\gamma\int_{\Tcal\Theta} |r + \ell_r|^{2}\rho_{t}\:\differential\theta\differential r = G_{1} + G_{2} + G_{3}.
\label{eq:G1G2G3}	
\end{align}
Direct calculation of the functional derivatives yield
\def\nablafull{\nabla_{{\tiny{\begin{pmatrix}
\theta\\
r	
\end{pmatrix}}}
}}
\begin{subequations}
\begin{align}
&G_{1}^{\prime} = -\gamma |r|^2, \label{eq:G1prime}\\
&G_{2}^{\prime} = 2\beta^{-1}\gamma \nablafull \cdot \frac{\partial}{\partial\nablafull \rho_t}\bigg\langle\begin{pmatrix}
0\\
r	
\end{pmatrix}, \nablafull \rho_{t}
\bigg\rangle = 2d\beta^{-1}\gamma, \label{eq:G2prime}\\
&G_{3}^{\prime} = -\beta^{-2}\gamma\left(-\frac{|\nabla_{r}\rho_{t}|^{2}}{\rho_{t}^{2}} - \nablafull \cdot \rho_{t}^{-1}\frac{\partial}{\partial\nablafull \rho_t}\bigg\langle\begin{pmatrix}
0\\
\nabla_{r}\rho_{t}	
\end{pmatrix},\nabla_{\tiny{\begin{pmatrix}
\theta\\
r	
\end{pmatrix}}}\rho_{t}
\bigg\rangle\right) \nonumber\\
&\quad = -\beta^{-2}\gamma\left(-\frac{|\nabla_{r}\rho_{t}|^{2}}{\rho_{t}^{2}} - \frac{2}{\rho_{t}}\Delta_{r}\rho_{t} + \frac{2}{\rho_{t}^{2}}|\nabla_{r}\rho_{t}|^{2}\right) = -\beta^{-2}\gamma\left(\frac{|\nabla_{r}\rho_{t}|^{2}}{\rho_{t}^{2}} - \frac{2}{\rho_{t}}\Delta_{r}\rho_{t}\right). \label{eq:G3prime}
\end{align}
\label{eq:PrimeOfG1G2G3}
\end{subequations}
Combining (\ref{eq:G1G2G3}) and (\ref{eq:PrimeOfG1G2G3}), we get $G^{\prime} = -\gamma |r|^2 + 2d\beta^{-1}\gamma - \beta^{-2}\gamma \left(\rho_{t}^{-2}|\nabla_{r}\rho_{t}|^{2} - 2\rho_{t}^{-1}\Delta_{r}\rho_{t}\right)$. Therefore,
\begin{align}
\nabla G^{\prime} = \begin{pmatrix}
 \nabla_{\theta}G^{\prime}\\
  \nabla_{r}G^{\prime}	
 \end{pmatrix} = -\gamma\begin{pmatrix}
\beta^{-2}\nabla_{\theta}\left(\rho_{t}^{-2}|\nabla_{r}\rho_{t}|^{2} - 2\rho_{t}^{-1}\Delta_{r}\rho_{t}\right)\\
2r + \beta^{-2}\nabla_{r}\left(\rho_{t}^{-2}|\nabla_{r}\rho_{t}|^{2} - 2\rho_{t}^{-1}\Delta_{r}\rho_{t}\right)	
\end{pmatrix}.
\label{eq:nablaGprime}	
\end{align}
Recalling that $\beta^{-2}\rho_{t}^{-2}|\nabla_{r}\rho_{t}|^{2} = |\ell_{r}|^{2}$, we get 
\begin{align}
|\nabla G^{\prime}|^{2} \leq 3\gamma^{2}\left(4|r|^{2} + \bigg|\nabla_{\tiny{\begin{pmatrix}
\theta\\
r	
\end{pmatrix}}} |\ell_{r}|^{2}\bigg|^{2} + 4 \beta^{-4}\bigg|\nabla_{\tiny{\begin{pmatrix}
\theta\\
r	
\end{pmatrix}}} \rho_{t}^{-1}\Delta_{r}\rho_{t}\bigg|^{2}\right),
\label{eq:BoundingNablaGPrimeSquared}	
\end{align}
and hence $\mathbb{E}_{\rho_{t}}\left[|\nabla G^{\prime}|^{2}\right]$ (the other factor in the RHS of (\ref{eq:CauchySchwarz})) is less than or equal to
\begin{align}\label{eq:ExpectedNablaGPrimeSquared}	
12\gamma^{2}\underbrace{\int |r|^{2}\rho_{t}\differential\theta\differential r}_{\text{term 1}} + 3\gamma^{2}\underbrace{\int\bigg|\nabla_{\tiny{\begin{pmatrix}
\theta\\
r	
\end{pmatrix}}} |\ell_{r}|^{2}\bigg|^{2}\rho_{t}\differential\theta\differential r}_{\text{term 2}} + 12\beta^{-4}\gamma^{2}\underbrace{\int\bigg|\nabla_{\tiny{\begin{pmatrix}
\theta\\
r	
\end{pmatrix}}} \rho_{t}^{-1}\Delta_{r}\rho_{t}\bigg|^{2}\rho_{t}\differential\theta\differential r}_{\text{term 3}}.
\end{align}
By Proposition \ref{prop:BoundedQuantitites}, the term 1 in (\ref{eq:ExpectedNablaGPrimeSquared}) is finite. Showing the finiteness of the terms 2 and 3 in (\ref{eq:ExpectedNablaGPrimeSquared}) requires somewhat tedious estimates. We only sketch the main ideas for the same. 

Letting $u:=|\ell_{r}|^{2}$, term 2 equals $\int |\nabla u|^{2} \rho_{t} \differential\theta\differential r = \int \langle\nabla u, \rho_{t}\nabla u\rangle \differential\theta\differential r$, which upon integration-by-parts and setting the boundary term to zero becomes: 
\[-\int u\nabla\cdot(\rho_t\nabla u)\differential\theta\differential r = -\int u \left(\langle\nabla\rho_{t},\nabla u\rangle + \rho_t\Delta u\right)\differential\theta\differential r = -\mathbb{E}_{\rho_t}\left[\langle\nabla\log\rho_{t},u\nabla u\rangle\right] + \mathbb{E}_{\rho_t}\left[u\Delta u\right].\]
Thus, term 2 in (\ref{eq:ExpectedNablaGPrimeSquared}) can be written as
\begin{align}
\int |\nabla u|^{2} \rho_{t} \differential\theta\differential r &= |-\mathbb{E}_{\rho_t}\left[\langle\nabla\log\rho_{t},u\nabla u\rangle\right] + \mathbb{E}_{\rho_t}\left[u\Delta u\right]|\nonumber\\
&\leq \mathbb{E}_{\rho_t}\left[|-\langle\nabla\log\rho_{t},u\nabla u\rangle + u\Delta u|\right]\nonumber\\
&\leq \mathbb{E}_{\rho_t}\left[|-\langle\nabla\log\rho_{t},u\nabla u\rangle|\right] + \mathbb{E}_{\rho_t}\left[|u\Delta u|\right]	\nonumber\\
&\leq |\nabla\log\rho_{t}|_{L^{2}(\rho_t)} |u\nabla u|_{L^{2}(\rho_t)} + |u|_{L^{1}(\rho_t)} |\Delta u|_{L^{\infty}(\rho_t)},
\label{HolderIneq}
\end{align}
wherein we used the Jensen's, triangle and H\"{o}lder's inequalities, respectively. Finiteness for two of the four terms in (\ref{HolderIneq}) have been pointed out before: $|\nabla\log\rho_{t}|_{L^{2}(\rho_t)} = \int \frac{|\nabla\rho_t|^2}{\rho_t}\differential\theta\differential r$ (the Fisher information) $< \infty$, and $|u|_{L^{1}(\rho_t)} = \int |\ell_r|^2\rho_t \differential\theta\differential r < \infty$. Following some calculation, the same estimates can be used to bound the remaining two terms.

For term 3 in (\ref{eq:ExpectedNablaGPrimeSquared}), notice that $\rho_{t}^{-1}\Delta_{r}\rho_{t} = \beta^{2}|\ell_r|^2 + \beta\nabla_{r}\cdot\ell_{r}$, and hence term 3 equals 
\[\mathbb{E}_{\rho_t}\bigg|2\beta^2\left(\frac{\partial\ell_r}{\partial\theta}\right)^{\!\!\top}\!\ell_r + \beta\nabla_{\theta}\left(\nabla_r\cdot\ell_r\right) \bigg|^2 + \mathbb{E}_{\rho_t}\bigg| 2\beta^2\left(\frac{\partial\ell_r}{\partial r}\right)^{\!\!\top}\!\ell_r + \beta\nabla_{r}\left(\nabla_r\cdot\ell_r\right) \bigg|^2.\]
Similar estimates as before show the finiteness of the above. We summarize: since each of the two factors in the RHS of (\ref{eq:CauchySchwarz}) are finite, $\partial_t G$ is upper bounded for all $t\geq 0$, which suffices to conclude that $G$ is uniformly continuous in $t$. Then by Barbalat's lemma \citen{[Lemma 4.2]{slotine1991applied}}, (\ref{eq:dotofLyap}) follows.
\end{proof}

\section{Stationary solutions and convergence}
\label{app:stationary}
\def\gibbs{\frac{\exp\left(-\frac{\beta}{2}|r|^2\right)}{Z_1}}

\subsection{Proof of Theorem~\ref{thm:decomposition}}
We seek to prove that any stationary solution $\rho^\star$ decomposes into the product of marginals $\rho^\star = \frac{\exp(-\beta \frac{|r|^2}{2})}{Z_1}[\rho^\star]^\theta$, where $[\rho^\star]^\theta$ is a solution to the Boltzmann equation~\eqref{eq:boltzmann}.
\begin{proof}
Let $\rho_t$ be the solution initialized at $\rho^{\star}$. Since $\rho^{\star}$ is stationary, we must have $\partial_t \Ecal(\rho_t) = 0$, i.e.,
\begin{equation}
\label{eq:char_proof}
\int |r + \beta^{-1}\nabla_r \log \rho^{\star}|^2 \differential \rho^{\star} = 0,
\end{equation}
by Proposition~\ref{prop:lyap_decreasing}.
Let $\rho^{\star}(\theta, r) = \gibbs \eta(\theta, r)$. To prove the first part of the claim, we seek to show that $\eta(\theta, \cdot)$ is a constant for a.e. $\theta$. We have $r + \beta^{-1}\nabla_r \log \rho^{\star} = \beta^{-1} \nabla_r \log \eta$, thus~\eqref{eq:char_proof} yields
\[
\nabla_r \log \eta = 0 \quad \text{$\eta$-a.e.}
\]
This implies that for a.e. $\theta$, the function $\eta(\theta, \cdot)$ is constant on its support; but it is also constant (equal to $0$) outside its support, thus by continuity it is constant on its entire domain. This proves the first part of the claim.

So far, we have shown that there exists $\eta \in \Pcal(\Theta)$ such that $\rho^{\star}(\theta, r) = \gibbs \eta(\theta)$, and we seek to characterize $\eta$. By the continuity equation~\eqref{eq:noisy_second_order}, since $\rho^\star$ is stationary, we must have $\nabla \cdot (\rho^\star v(\rho^\star)) = 0$, i.e.,
\begin{align*}
0
&= \nabla_\theta \cdot (\rho^{\star} r) + \nabla_r \cdot [\rho^{\star} (-\nabla F'(\eta) - \gamma r - \gamma\beta^{-1} \nabla_r \log \rho^{\star})] \\
&= \nabla_\theta \cdot (\rho^{\star} r) - \nabla_r \cdot [\rho^{\star} \nabla F'(\eta)]&\text{since } \nabla_r \log \rho^{\star} = -\beta r, \\
&= \braket{\gibbs r}{\nabla_\theta \eta} - \braket{\eta \nabla_{\theta} F'(\eta)}{\nabla_r \gibbs} \\
&= \braket{\gibbs r}{\nabla_\theta \eta + \beta \eta \nabla_{\theta} F'(\eta)},
\end{align*}
where the equality is for a.e. $r, \theta$. Therefore, $\nabla_\theta \eta + \beta \eta \nabla_{\theta} F'(\eta) = 0$. This is equivalent to
\[
\nabla_\theta \log \eta + \beta \nabla_{\theta} F'(\eta) = 0, \ \text{$\eta$-a.e.,}
\]
and integrating, we obtain: $\log \eta = - \beta \nabla_{\theta} F'(\eta) + \text{a constant}$. This is equivalent to the Boltzmann fixed point equation~\eqref{eq:boltzmann}, as desired.
\end{proof}

\subsection{Proof of Proposition~\ref{prop:boltzmann}}
We seek to prove that the operator
\[
T: \rho \mapsto T(\rho) = \frac{\exp(-\beta F'(\rho))}{Z_2(\rho)}
\]
admits a unique point on $\Pcal(\Theta)$. Note that $T$ is well-defined for all $\rho$ by virtue of assumption~\ref{A4}. Indeed, $F'(\rho) = F_0'(\rho) + g$, and assumption~\ref{A4} states that $F'_0$ is uniformly bounded on $\Pcal$, and $\exp(-\beta g)$ is integrable, thus if $M$ is an upper bound on $\|F'_0(\rho)\|_\infty$, we have $\exp(-\beta F'(\rho)) \leq \exp{\beta M} \exp(-\beta g)$, which is integrable.

\begin{proof}[Proof of existence:]
We will use Schauder's fixed point theorem \citen{[p. 286, Theorem 11.6]{gilbarg2015elliptic}}, stated below. Recall that a subset of a metric space is precompact if any sequence in that subset has a converging subsequence.
\begin{theorem*}[Schauder's fixed point theorem]
Let $X$ be a Banach space and $M \subset X$ be non-empty, convex and closed. If $T: M \mapsto M$ is a continuous operator such that $T(M)$ is precompact, then $T$ has a fixed point.
\end{theorem*}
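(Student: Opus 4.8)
The plan is to complete the application of Schauder's theorem to obtain existence, and then prove uniqueness separately by a monotonicity identity. Write $M_{0}\geq 0$ for a uniform bound on $\|F_{0}'(\rho)\|_{\infty}$ over $\Pcal(\Theta)$, which exists by assumption~\ref{A4}. Since $F'(\rho)=F_{0}'(\rho)+g$ and $-M_{0}\leq F_{0}'(\rho)\leq M_{0}$, one has the pointwise two-sided bound $e^{-\beta M_{0}}e^{-\beta g}\leq e^{-\beta F'(\rho)}\leq e^{\beta M_{0}}e^{-\beta g}$, hence $Z_{2}(\rho)\geq z_{0}:=e^{-\beta M_{0}}\|e^{-\beta g}\|_{1}>0$ and $T(\rho)\leq C\,e^{-\beta g}$ with $C:=e^{2\beta M_{0}}/\|e^{-\beta g}\|_{1}$. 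I would then take the Banach space $X=L^{1}(\Theta)$ and the set
\[
M:=\bigl\{\rho\in L^{1}(\Theta):\ \rho\geq 0\text{ a.e.},\ \textstyle\int_{\Theta}\rho=1,\ \rho\leq C\,e^{-\beta g}\text{ a.e.}\bigr\}.
\]
It is immediate that $M$ is convex, nonempty (it contains $e^{-\beta g}/\|e^{-\beta g}\|_{1}$ since $C\geq \|e^{-\beta g}\|_{1}^{-1}$), and closed in $L^{1}$ (an $L^{1}$-convergent sequence has an a.e.\ convergent subsequence, so the pointwise bound survives and the unit mass is preserved by dominated convergence with dominating function $C\,e^{-\beta g}$). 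By the bound above, $T(\Pcal(\Theta))\subseteq M$, in particular $T(M)\subseteq M$.

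Next I would check the two remaining hypotheses of Schauder's theorem. \emph{Continuity of $T$ on $M$:} if $\rho_{n}\to\rho$ in $L^{1}$, then by assumption~\ref{A4} the family $\{F_{0}'(\rho_{n})\}$ is uniformly equicontinuous and uniformly bounded, and a subsequence argument together with the continuity of $\rho\mapsto F_{0}'(\rho)$ inherent in its definition $F_{0}'(\rho)=\langle R'(\langle\Psi,\rho\rangle),\Psi(\cdot)\rangle_{\Fcal}$ yields $F_{0}'(\rho_{n})\to F_{0}'(\rho)$ locally uniformly; hence $e^{-\beta F'(\rho_{n})}\to e^{-\beta F'(\rho)}$ pointwise, dominated by the integrable function $e^{\beta M_{0}}e^{-\beta g}$, so dominated convergence gives $Z_{2}(\rho_{n})\to Z_{2}(\rho)$ and $T(\rho_{n})\to T(\rho)$ in $L^{1}$. \emph{Precompactness of $T(M)$ in $L^{1}$:} I would invoke the Fr\'echet--Kolmogorov criterion, verifying that (i) $\|T(\rho)\|_{1}=1$; (ii) the tails are uniformly small, $\int_{|\theta|>R}T(\rho)\leq z_{0}^{-1}e^{\beta M_{0}}\int_{|\theta|>R}e^{-\beta g}\to 0$ uniformly in $\rho$; and (iii) translations are uniformly $L^{1}$-continuous: with $\phi_{\rho}:=e^{-\beta F_{0}'(\rho)}$ and $h:=e^{-\beta g}$,
\[
T(\rho)(\cdot+y)-T(\rho)=Z_{2}(\rho)^{-1}\bigl[(\phi_{\rho}(\cdot+y)-\phi_{\rho})\,h(\cdot+y)+\phi_{\rho}\,(h(\cdot+y)-h)\bigr],
\]
whose $L^{1}$-norm is at most $z_{0}^{-1}\bigl(\omega(|y|)\,\|h\|_{1}+e^{\beta M_{0}}\|h(\cdot+y)-h\|_{1}\bigr)$, where $\omega$ is the common modulus of continuity of the uniformly bounded, uniformly equicontinuous family $\{\phi_{\rho}\}$; both terms vanish as $y\to 0$ uniformly in $\rho$ (the second because translation is continuous in $L^{1}$ and the prefactor is uniformly bounded). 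With (i)--(iii) in hand, $T(M)$ is precompact, and Schauder's theorem furnishes a fixed point $\rho^{\star}\in M\subseteq\Pcal(\Theta)$.

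For uniqueness, suppose $\rho_{1}=T(\rho_{1})$ and $\rho_{2}=T(\rho_{2})$; both are strictly positive. Taking logarithms, $\log\rho_{i}=-\beta F'(\rho_{i})-\log Z_{2}(\rho_{i})$. Subtracting these and pairing with $\rho_{1}-\rho_{2}\in L^{1}(\Theta)$ — the pairing is finite since $|\log\rho_{i}|\lesssim \beta g+1$ and $|\rho_{1}-\rho_{2}|\leq 2Ce^{-\beta g}$ with $g\,e^{-\beta g}$ integrable by~\ref{A4}, and the constant $\log(Z_{2}(\rho_{1})/Z_{2}(\rho_{2}))$ drops out because $\int_{\Theta}(\rho_{1}-\rho_{2})=0$ — gives
\[
\langle\log\rho_{1}-\log\rho_{2},\,\rho_{1}-\rho_{2}\rangle=-\beta\,\langle F'(\rho_{1})-F'(\rho_{2}),\,\rho_{1}-\rho_{2}\rangle.
\]
The left-hand side is nonnegative because $(\log a-\log b)(a-b)\geq 0$ for $a,b>0$, with equality a.e.\ only if $\rho_{1}=\rho_{2}$ a.e.; the right-hand side is nonpositive because $F$ is convex (assumption~\ref{A:R_diff}), so its Fr\'echet derivative is monotone, i.e.\ $\langle F'(\rho_{1})-F'(\rho_{2}),\rho_{1}-\rho_{2}\rangle\geq 0$. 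Hence both sides vanish and $\rho_{1}=\rho_{2}$, so the fixed point is unique. I expect the main obstacle to be step (iii) in the precompactness argument — the uniform-in-$\rho$ $L^{1}$-equicontinuity of $T(M)$ — since this is exactly where the uniform equicontinuity of $\{F_{0}'(\rho)\}$ from assumption~\ref{A4} is indispensable; by contrast, the positivity and finiteness of $Z_{2}$, the domination by $Ce^{-\beta g}$, and the monotonicity facts used for uniqueness are routine.
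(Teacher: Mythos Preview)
Your proposal does not address the stated theorem. The statement given is Schauder's fixed point theorem itself---an abstract result about continuous operators on closed convex subsets of a Banach space---which the paper does not prove; it is merely quoted and cited from Gilbarg--Trudinger. There is therefore no ``paper's own proof'' to compare against, and nothing in your write-up establishes the abstract Schauder statement.

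What you have actually written is a proof of Proposition~\ref{prop:boltzmann} (existence and uniqueness of the Boltzmann fixed point), which \emph{applies} Schauder's theorem as a black box. If that was your intended target, your argument is correct and close in spirit to the paper's, with a genuine methodological difference in the existence part. The paper takes $M=B=\{\rho\in L^{1}:\rho\geq 0,\ \|\rho\|_{1}\leq 1\}$ and obtains precompactness of $T(B)$ by applying Arzel\`a--Ascoli to the family $\{F_{0}'(\rho_{n})\}$ (using~\ref{A4}) and then passing to $T(\rho_{k_n})$ via pointwise convergence, dominated convergence for the normalizers, and Scheff\'e's lemma. You instead shrink $M$ to the densities dominated by $Ce^{-\beta g}$ and verify precompactness of $T(M)$ directly in $L^{1}$ via the Fr\'echet--Kolmogorov criterion, with the uniform equicontinuity of $\{e^{-\beta F_{0}'(\rho)}\}$ feeding into the translation estimate. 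Both routes are valid; yours makes the role of the confining regularizer and the $L^{1}$ compactness explicit and also checks continuity of $T$ (which the paper leaves implicit), while the paper's Arzel\`a--Ascoli argument is shorter because it pushes the compactness to the level of $F_{0}'$. Your uniqueness argument via $(\log a-\log b)(a-b)\geq 0$ and monotonicity of $F'$ is exactly the paper's symmetrized KL-divergence computation, rewritten pointwise.
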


Let $B = \{\rho \in L^1(\Theta) : \rho \geq 0, \|\rho\|_1 \leq 1\}$. Note that $T(B) \subset \Pcal(\Theta)$, since $T(\rho)$ is normalized. Thus, to prove that $T$ has a fixed point on $\Pcal$, it suffices to prove that $T$ has a fixed point on $B$. To this end, we apply Schauder's theorem with $X = L^1(\Theta)$ and $M = B$.

Let $(\rho_n)$ be a sequence of elements in $B$. We shall prove that $(T(\rho_n))$ has a converging subsequence. We have that $F'$ decomposes into $F'(\rho_n) = F'_0(\rho_n) + g$. By assumption \ref{A4}, $(F'_0(\rho_n))_n$ is uniformly equicontinuous and uniformly bounded, thus by the Arzela-Ascoli theorem, there exists a subsequence $(F'_0(\rho_{k_n}))$ that converges uniformly to some continuous, bounded function $\ell$. We will show that $T(\rho_{k_n})$ converges in $L^1$ to $\rho := \exp(-\beta (\ell + g)) / \|\exp(-\beta(\ell + g))\|_1$, which is well-defined since $\ell$ is bounded and $g$ is confining. Observing that for all $n$, $\|T(\rho_{k_n})\|_1 = \|\rho\|_1 = 1$, we have by Scheff\'{e}'s lemma \citen{[p. 55]{williams1991probability}} that poitwise convergence of $T(\rho_{k_n})$ to $\rho$ implies convergence in $L^1$. Thus it suffices to prove pointwise convergence.

By continuity of the exponential function, we have
\begin{equation}
\label{eq:proof_fixed_point_1}
\exp( -\beta F'(\rho_{k_n})) \to \exp(-\beta(\ell+g)),
\end{equation}
where the convergence is pointwise. By assumption \ref{A4}, $\{F'_0(\rho), \rho \in B\}$ is uniformly bounded and $\exp( -\beta g)$ is integrable, thus, by the dominated convergence theorem,
\begin{equation}
\label{eq:proof_fixed_point_2}
\|\exp( -\beta F'(\rho_{k_n}))\|_1 \to \|\exp( -\beta(\ell + g))\|_1.
\end{equation}
By~\eqref{eq:proof_fixed_point_1} and~\eqref{eq:proof_fixed_point_2}, we have $T(\rho_{k_n})$ converges pointwise to $\rho$, which concludes the proof.

\end{proof}

\begin{proof}[Proof of uniqueness:]
Suppose $\rho_1, \rho_2 \in \Pcal(\Theta)$ are two fixed points of $T$. Then we have for $i \in \{1, 2\}$,
\begin{equation}
\label{eq:uniqueness_proof}
\log Z(\rho_i) = -\log \rho_i(\theta) - \beta F'(\rho_i)(\theta) \text{ for a.e. $\theta$}.
\end{equation}
We have
\newcommand\kl[2]{D_{\text{KL}}(#1 \| #2)}
\begin{align*}
0 &= \braket{\log Z(\rho_1) - \log Z(\rho_2)}{\rho_1 - \rho_2} & \text{since $\log Z(\rho_i)$ are constants}\\
&= -\braket{\log \rho_1 + \beta F'(\rho_1) - \log \rho_2 - \beta F'(\rho_2)}{\rho_1 - \rho_2} & \text{by~\eqref{eq:uniqueness_proof}}\\
&= -\braket{F'(\rho_1) - F'(\rho_2)}{\rho_1 - \rho_2} - \kl{\rho_1}{\rho_2} - \kl{\rho_2}{\rho_1} \\
&\leq - \kl{\rho_1}{\rho_2} - \kl{\rho_2}{\rho_1} & \text{by convexity of $F$}.
\end{align*}
where $\kl{\rho_1}{\rho_2} = \braket{\log \frac{\rho_1}{\rho_2}}{\rho_1}$. Note that $\rho_1, \rho_2$ are both normalized by assumption, so both KL divergences are non-negative, with equality if and only if $\rho_1 = \rho_2$ a.e. This concludes the proof.
\end{proof}

\subsection{Proof of Theorem~\ref{thm:stationarymeasure}}
\begin{proof}
(i) Recall from Section 2.2 that under the stated conditions on the initial measure $\mu_{0}$, the equation (\ref{eq:noisy_second_order}) admits a unique solution $(\mu_{t})_{t\geq 0}$ satisfying $\mu_{t} \in C\left([0,\infty),\Mcal(\Tcal\Theta)\right)$, that is, $(\mu_{t})_{t\geq 0}$ is a continuous measure-valued trajectory satisfying $\int\differential\mu_{t} < \infty$ for all $t\geq 0$.

From Proposition~\ref{prop:BoundedQuantitites}, we know that the quantities $F_0([\mu_{t}]^{\theta}) < \infty$, $\int_{\Tcal\Theta} (g(\theta) + |r|^{2}/2)\differential\mu_{t}< \infty$, $\int_{\Tcal\Theta}\log^{+}\mu_{t}\differential\mu_{t}< \infty$ for all $t\geq 0$, with their upper bounds being independent of $t$. Hence by the Dunford-Pettis theorem \citen{[p. 123]{albiac2006topics}}, the solutions $(\mu_{t})_{t\geq 0}$ are weakly compact in $L^{1}(\Tcal\Theta)$. Thus, there exists $\mu^{\star}$ and a subsequence $(\mu_{t_{k}})_{k\geq 1}$ such that $(\mu_{t_{k}})_{k\geq 1}$ converges weakly to $\mu^{\star}$. 

To prove $\mu_{t}$ is absolutely continuous (w.r.t. the Lebesgue measure) for each $t\geq 0$, we now show that the sequence of random vectors $(X_{k})_{k\geq 1} := (\theta_{t_{k}}, r_{t_{k}})_{k\geq 1}$ are uniformly integrable. By de la Vall\'{e}e-Poussin's criterion \citen{[p. 3-4]{chaumont2012exercises}}, the latter holds if and only if there is an increasing function $\Phi : \mathbb{R}_{>0} \mapsto \mathbb{R}_{>0}$ satisfying $\lim_{x\rightarrow\infty}\frac{\Phi(x)}{x} = +\infty$, such that $\sup_{k\geq 1}\mathbb{E}_{\mu_{t_{k}}}\left[\Phi(X_{k})\right] < \infty$. To apply this in our context, we set $\Phi(x)\equiv x^{2}$, and use the result from Proposition~\eqref{prop:BoundedQuantitites} that $\int_{\Tcal\Theta} |r_{t_{k}}|^{2}\differential\mu_{t_{k}}$ is uniformly upper bounded for all $k\geq 1$. Therefore, $(X_{k})_{k\geq 1}$ are uniformly integrable, and equivalently, the measures $\mu_{t_{k}}$ are absolutely continuous, and the corresponding joint PDFs exist for all $k\geq 1$. Taking $\{t_{k}\}_{k\geq 1}$ to be an arbitrary sequence, we deduce that $\mu_{t}$ is absolutely continuous for each $t\geq 0$. Taking $\{t_{k}\}_{k\geq 1}$ to be the sequence corresponding to the weakly convergent subsequence $(\mu_{t_{k}})_{k\geq 1}$ mentioned in the previous paragraph, we deduce that $\mu^{\star}$ is absolutely continuous. 

(ii) Let us consider the joint PDF trajectory $(\rho_{t})_{t\geq 0}$ corresponding to the measure-valued trajectory $(\mu_{t})_{t\geq 0}$ that solves (\ref{eq:noisy_second_order}). From part (i), we know that $(\rho_{t})_{t\geq 0}$ exists and is weakly compact in $L^{1}(\mathcal{T}\Theta)$. Letting
\[\zeta_{t}(s,\theta,r):= \rho_{t+s}(\theta,r),\] 
we now prove that $(\zeta_{t})_{t\geq 0}$ is strongly compact in $C([0,T],L^{1}(\mathcal{T}\Theta))$ for any $T>0$. From Theorem~\ref{thm:Barabalat}, we can write
\begin{align}
\lim_{t\rightarrow\infty}\int_{0}^{T}\partial_{t}\mathcal{E}(t+s)\:\differential s = 0,
\label{eq:LyapDerivative}	
\end{align}
which combined with Proposition~\ref{prop:lyap_decreasing} yields
\begin{align}
\lim_{t\rightarrow\infty} \|r\sqrt{\zeta_{t}} + 2\beta^{-1}\nabla_{r}\sqrt{\zeta_{t}}\|_{L^{2}\left([0,T]\times\mathcal{T}\Theta\right)}	 = 0.
\label{eq:zetalimit}	
\end{align}
The remaining proof follows the same line of arguments as in \citen{[p. 1365--1367]{soler1997asymptotic}}. Specifically, for any given sequence $\{t_{k}\}_{k\geq 1}$ with $\lim_{k\rightarrow\infty}t_{k} = \infty$, letting $\zeta_{k}:=\zeta_{t_{k}}$, one shows that the sequence of $\{\zeta_{k}\}_{k\geq 1}$ is relatively compact in $C\left([0,T],L^{1}(\mathcal{T}\Theta)\right)$ for any $T>0$. Consequently, $\zeta_{k} \rightarrow \zeta_{\infty}$ strongly in $L^{1}(\mathcal{T}\Theta)$, which is to say $\rho_{t} \rightarrow \rho^{\star}$ (equivalently, $\mu_{t} \rightarrow \mu^{\star}$) strongly in $L^{1}(\mathcal{T}\Theta)$, as desired.	
\end{proof}


\subsection{Proof of Theorem~\ref{thm:minimum}}
We start by showing that $\rho^\star$ is a minimizer of $\Ecal(\rho)$ over $\Kcal$, by adapting the argument from the first-order case~\citen{[Lemma 6.2]{mei2018mean}}. We omit some details and emphasize the differences.

Recall that
\[
\Kcal = \{\rho \in \Pcal(\Tcal\Theta) : \braket{g(\theta) + |r|^2/2}{\rho} < \infty\}.
\]
\begin{lemma}
\label{lem:free_energy_minimizer}
Let $\rho^\star$ be the unique solution of the Boltzmann fixed point equation~\eqref{eq:boltzmann}. Then for all $\rho \in \Kcal$, $\Ecal(\rho) \geq \Ecal(\rho^\star)$.
\end{lemma}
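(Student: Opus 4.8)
The plan is to show that $\rho^\star$ minimizes $\Ecal$ over $\Kcal$ by a standard convexity-plus-first-order-condition argument, adapted from the first-order mean-field analysis in~\citen{[Lemma 6.2]{mei2018mean}}. First I would record the key structural fact: since $\rho^\star$ solves the Boltzmann fixed point equation~\eqref{eq:boltzmann}, we have $\log \rho^\star(\theta) = -\beta F'(\rho^\star)(\theta) - \log Z_2(\rho^\star)$ for a.e.\ $\theta$, i.e.\ the quantity $F'(\rho^\star)(\theta) + \beta^{-1}\log\rho^\star(\theta)$ is constant (equal to $-\beta^{-1}\log Z_2(\rho^\star)$) on the support of $\rho^\star$. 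Working with the $\theta$-marginalized free energy is convenient here: by Theorem~\ref{thm:decomposition} the velocity marginal of any minimizer candidate is forced to be the Gaussian $\propto \exp(-\tfrac{\beta}{2}|r|^2)$, and among all $\rho\in\Kcal$ with a fixed $\theta$-marginal, the term $\dotp{\tfrac12|r|^2}{\rho} + \beta^{-1}H(\rho)$ is minimized exactly when the conditional law of $r$ given $\theta$ is that Gaussian (this is the classical fact that a Gaussian minimizes $\tfrac{\beta}{2}\mathbb{E}|r|^2 + $ negative entropy; it follows from a one-line KL-divergence computation). So it suffices to prove the inequality for the reduced functional $\bar\Ecal(\nu) := F(\nu) + \beta^{-1}H(\nu) + \text{const}$ over $\nu\in\Pcal(\Theta)$ with $\dotp{g}{\nu}<\infty$, and then $[\rho^\star]^\theta$ is the candidate minimizer.

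Next I would set up the convexity estimate. For $\rho \in \Kcal$ arbitrary, write $\nu = [\rho]^\theta$ and $\nu^\star = [\rho^\star]^\theta$, and consider the interpolation $\nu_s = (1-s)\nu^\star + s\nu$. Since $F$ is convex and Fr\'echet differentiable (assumptions~\ref{A:R_diff}, \ref{A:psi_diff}), we have $F(\nu) \geq F(\nu^\star) + \dotp{F'(\nu^\star)}{\nu - \nu^\star}$. For the entropy term, $H$ is convex, and I would use the first-order expansion together with the fact that $H(\nu) - H(\nu^\star) \geq \dotp{1 + \log\nu^\star}{\nu - \nu^\star} = \dotp{\log\nu^\star}{\nu-\nu^\star}$ (using that $\nu,\nu^\star$ are both normalized to kill the constant). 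Adding these,
\[
\bar\Ecal(\nu) - \bar\Ecal(\nu^\star) \;\geq\; \dotp{F'(\nu^\star) + \beta^{-1}\log\nu^\star}{\,\nu - \nu^\star\,}.
\]
Now invoke the fixed-point identity: $F'(\nu^\star) + \beta^{-1}\log\nu^\star$ equals the constant $-\beta^{-1}\log Z_2(\nu^\star)$ on $\mathrm{supp}\,\nu^\star$, and since $\nu^\star > 0$ everywhere (the Boltzmann density is strictly positive, as $F'(\nu^\star)$ is finite everywhere by \ref{A:grad_bounded}), the identity holds on all of $\Theta$. Hence $\dotp{F'(\nu^\star) + \beta^{-1}\log\nu^\star}{\nu - \nu^\star} = -\beta^{-1}\log Z_2(\nu^\star)\cdot(\|\nu\|_1 - \|\nu^\star\|_1) = 0$, giving $\bar\Ecal(\nu)\geq\bar\Ecal(\nu^\star)$, and therefore $\Ecal(\rho)\geq\Ecal(\rho^\star)$.

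The main obstacle is integrability/finiteness bookkeeping, not the convexity idea: one must ensure all the pairings above are well-defined and the inequalities $H(\nu) - H(\nu^\star)\geq\dotp{\log\nu^\star}{\nu-\nu^\star}$ and the $F$-convexity inequality can be taken directly rather than only along the interpolation $\nu_s$ with $s\downarrow 0$. The cleanest route is to argue along $\nu_s$ (where everything is finite since $\Kcal$-membership and the explicit Gaussian/Boltzmann form control the entropy), differentiate $\tfrac{d}{ds}\bar\Ecal(\nu_s)$ at $s=0^+$, show it equals the pairing above (this is where one needs $\dotp{g}{\nu}<\infty$ to justify dominated convergence, and the lower bound from Proposition~\ref{prop:lyap_bounded} to keep $\bar\Ecal(\nu_s)$ bounded below), and then use convexity of $s\mapsto\bar\Ecal(\nu_s)$ to conclude $\bar\Ecal(\nu)-\bar\Ecal(\nu^\star)\geq\tfrac{d}{ds}\big|_{0^+}\bar\Ecal(\nu_s) = 0$. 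I would also need to double-check the potentially singular behavior of $\log\nu^\star$ near infinity (it behaves like $-\beta g$, which is integrable against $\nu_s$ precisely because $\nu_s\in\Kcal$), and that $F'$ evaluated at the interpolant converges to $F'(\nu^\star)$ — which follows from Fr\'echet differentiability of $F$ and the uniform bound in \ref{A4}.
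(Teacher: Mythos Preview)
Your argument is correct, but it takes a genuinely different route from the paper's. The paper proceeds \emph{indirectly}: it first shows that $\Ecal$ attains a minimizer on $\Kcal$ via a compactness argument (lower bound from Proposition~\ref{prop:lyap_bounded}, Dunford--Pettis / de la Vall\'ee--Poussin to extract a weak limit, lower semicontinuity of $\Ecal$), then shows by a perturbation argument that any minimizer must be strictly positive a.e., and finally computes directional derivatives along compactly supported test functions to conclude that any minimizer satisfies the Boltzmann fixed-point equation~\eqref{eq:boltzmann}; uniqueness of that equation (Proposition~\ref{prop:boltzmann}) then forces the minimizer to be $\rho^\star$.

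Your approach is the \emph{direct} one: since $\Ecal$ is convex and $\rho^\star$ satisfies the first-order optimality condition (encoded by the Boltzmann identity $\beta^{-1}\log\rho^\star + F'([\rho^\star]^\theta) + \tfrac12|r|^2 = \mathrm{const}$), global optimality follows from the subgradient inequality. In fact the reduction to the $\theta$-marginal is optional --- you can run the same computation directly on $\Tcal\Theta$: combine $F([\rho]^\theta)-F([\rho^\star]^\theta)\geq\braket{F'([\rho^\star]^\theta)}{\rho-\rho^\star}$ with $H(\rho)-H(\rho^\star)\geq\braket{\log\rho^\star}{\rho-\rho^\star}$ (KL nonnegativity), substitute the explicit form of $\log\rho^\star$, and watch everything cancel. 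The integrability checks you flag are exactly right and go through on $\Kcal$: $\braket{\log\rho^\star}{\rho}$ is finite because $\log\rho^\star = -\beta F_0'([\rho^\star]^\theta) - \beta g -\tfrac{\beta}{2}|r|^2 - \mathrm{const}$, with $F_0'$ bounded by~\ref{A4} and $\braket{g+|r|^2/2}{\rho}<\infty$ by definition of $\Kcal$; and if $H(\rho)=+\infty$ the conclusion is trivial. One small correction: the strict positivity of $\rho^\star$ follows from~\ref{A4} (boundedness of $F_0'$ and finiteness of $g$), not from~\ref{A:grad_bounded}, which only controls $\nabla F'$.

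What each approach buys: yours is shorter and avoids the compactness/lower-semicontinuity machinery entirely. The paper's approach, while longer, yields existence of a minimizer \emph{a priori} (without knowing the Boltzmann equation has a solution) and gives a first-order characterization of minimizers that would remain valid even without convexity of $F$; it then leans on the separately proved uniqueness in Proposition~\ref{prop:boltzmann}.
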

\begin{proof}
First, we argue that $\Ecal$ has a minimizer over $\Kcal$. Note that $\Ecal(\rho)$ is lower-bounded on $\Kcal$ by Proposition~\ref{prop:lyap_bounded}. Thus, $\inf_{\rho \in \Kcal} \Ecal(\rho)$ is finite and there exists a sequence $\rho_k \in \Kcal$ such that $\lim_{k \to \infty} \Ecal(\rho_k) = \inf_{\rho \in \Kcal} \Ecal(\rho)$. Furthermore, by the same argument as the proof of Proposition~\ref{prop:BoundedQuantitites}, the quantities $F_0([\rho_k]^\theta), \braket{g(\theta) + |r|^2/2}{\rho_k}, \braket{\log^+ \rho_k}{\rho_k}$ are bounded uniformly in $k$. Thus, by de la Vall\'{e}e-Poussin's criterion~\citen{[p. 3-4]{chaumont2012exercises}}, there exists $\rho_\infty \in \Kcal$ such that a subsequence of $\rho_k$ converges weakly to $\rho_\infty$. By lower semi-continuity of $\Ecal$, we have $\Ecal(\rho_\infty) = \inf_{\rho \in \Kcal} \Ecal(\rho)$.

Second, we show that any minimizer of $\Ecal$ on $\Kcal$, must, in fact, be equal to $\rho^\star$. Let $\bar \rho$ be such a minimizer. Then $\bar \rho$ must be positive a.e., otherwise, a perturbation of $\bar \rho$ can decrease the value of $\Ecal$. Indeed, suppose that there exists a bounded subset $S$ of positive Lebesgue measure, such that $\bar \rho \equiv 0$ on $S$, and define $\bar \rho_\epsilon = (1-\epsilon) \bar \rho + \epsilon u_S$, where $u_S = 1_S / \|1_S\|_1$ is the uniform distribution over $S$. Then $\bar \rho_\epsilon$ is in $\Kcal$ (since $S$ is bounded), and there exist constants $A_0$ and $B_0$ such that
\begin{align*}
F([\bar \rho_\epsilon]^\theta) &\leq (1-\epsilon) F([\bar \rho]^\theta) + \epsilon A_0 & \text{by convexity of $F$}\\
\braket{|r|^2/2}{\bar \rho_\epsilon} &\leq (1-\epsilon)\braket{|r|^2/2}{\rho} + \epsilon B_0 & \text{by boundedness of $S$}\\
H(\bar \rho_\epsilon)
&= \braket{\log ((1-\epsilon) \bar \rho + \epsilon u_S)}{(1-\epsilon) \bar \rho + \epsilon u_S} \\
&\leq (1-\epsilon) H(\bar \rho) + \epsilon \log \frac{\epsilon}{\|1_S\|_1}.
\end{align*}
Summing the previous inequalities, we see that there exists a constant $C$ such that $\Ecal(\bar \rho_\epsilon) \leq (1-\epsilon)\Ecal(\bar \rho) + \epsilon C + \frac{1}{\beta} \epsilon \log(\epsilon)$, which is strictly less than $\Ecal(\bar \rho)$ for $\epsilon < e^{-\beta C}$, a contradiction. Therefore $\bar \rho$ must be positive a.e.

Once we have established that $\bar \rho$ is positive a.e., we can show that $\bar \rho$ satisfies the Boltzmann fixed point equation~\eqref{eq:boltzmann}. Indeed, consider the set $\Gamma_k := \{(\theta, r): \frac{1}{k} \leq \bar \rho(\theta, r) \leq k\}$, and let $\Tcal_k = \{f \in C^\infty(\Tcal\Theta) : \text{support}(f) \subseteq \Gamma_k, \|f\|_\infty \leq 1, \int f = 0\}$. In other words, $\Tcal_k$ is a set of tangent vectors such that $\bar \rho + \frac{1}{k} \Tcal_k \subset \Kcal$. The directional derivative of $\Ecal$ in the direction $f \in \Tcal_k$ is well-defined and given by
\begin{equation}
\label{eq:directional_derivative}
\lim_{\epsilon \to 0} \frac{\Ecal(\bar \rho + \epsilon f) - \Ecal(\bar\rho)}{\epsilon} = \braket{F'([\bar \rho]^\theta) + \frac{1}{2}|r|^2 + (1 + \log \bar \rho)}{f}
\end{equation}
and since $\bar \rho$ is a minimizer of $\Ecal$ on $\Kcal$,~\eqref{eq:directional_derivative} must be non-negative for all $f$. Therefore one must have that the integrand $F'([\bar \rho]^\theta) + \frac{1}{2}|r|^2 + (1 + \log \bar \rho)$ is zero a.e. on $\Gamma_k$. But since $\Tcal\Theta = \cup_{k \geq 1} \Gamma_k$, it must be zero a.e. on $\Tcal\Theta$. This implies that $\bar \rho$ is a solution to the Boltzmann fixed point equation~\eqref{eq:boltzmann}, which admits a unique solution $\rho^\star$ by Proposition~\ref{prop:boltzmann}. This concludes the proof.
\end{proof}

\begin{proof}[Proof of Theorem~\ref{thm:minimum}]
Let $F_\lambda$ denote the regularized functional with regularization coefficient $\lambda$, i.e. $F_\lambda(\rho) = F_0(\rho) + \lambda \braket{g}{\rho}$. We shall prove that there exists a constant $C_1$ such that, for all $\beta \geq 1$,
\[
F_{1 - 1/\beta}([\rho^\star]^\theta) \leq \inf_{\eta \in \Pcal(\Theta)}F(\eta) + \frac{C_1 + d \log \beta}{\beta}.
\]

By Lemma~\ref{lem:free_energy_minimizer}, we have $\Ecal(\rho^\star) \leq \Ecal(\rho)$ for all $\rho \in \Kcal$, and observing that $\rho^\star \in \Kcal$, we have by Proposition~\ref{prop:lyap_bounded} applied to $\rho^\star$ and $\alpha = 1$, $F_{1 - 1/\beta}([\rho^\star]^\theta) \leq \Ecal(\rho^\star) + C(1)/\beta$. Combining the previous bounds, we have for all $\rho \in \Kcal$,
\begin{equation}
\label{eq:minimum_proof_1}
F_{1 - 1/\beta}([\rho^\star]^\theta) \leq \Ecal(\rho) + \frac{C(1)}{\beta}.
\end{equation}
In order to conclude, we shall bound the difference between $\Ecal$ and $F$. Note that $\Ecal(\rho) - F([\rho]^\theta) = \frac{1}{2}\braket{|r|^2}{\rho} + \frac{1}{\beta}\braket{\log \rho}{\rho}$, which can be arbitrarily large due to the entropy term. To resolve this issue, one can take a convolution with a Gaussian to control the entropy. More precisely, let $\eta \in \Pcal(\Theta)$, and define $\rho_\eta \in \Kcal$ as the product:
\[
\rho_\eta(\theta, r) := [g_1*\eta](\theta)g_2(r)
\]
where $g_1, g_2$ are two Gaussian PDFs over $\theta$ and $r$ respectively, each with mean $0$ and variance $1/\beta$, and $*$ denotes the convolution. Our goal is to bound the difference between $\Ecal(\rho_\eta)$ and $F(\eta)$. Following the same line of argument as in~\citen{[Lemma 6.5]{mei2018mean}}, there exists a constant $K$ such that
\begin{align*}
&F(g_1*\eta) \leq F(\eta) + \frac{K}{\beta}, \\
&\braket{\frac{1}{2}|r|^2}{g_2} = \frac{d/2}{\beta}, \\
& H(\rho_\eta) \leq (H(g_1) + H(g_2)) = -d \log(2\pi e / \beta).
\end{align*}
Summing the inequalities above, we obtain
\begin{equation}
\label{eq:minimum_proof_2}
\Ecal(\rho_\eta) = F(g_1*\eta) + \frac{1}{2}\braket{|r|^2}{g_2} + \frac{1}{\beta}H(\rho_\eta) \leq F(\eta) + \frac{K + d/2 - d \log(2\pi e/ \beta)}{\beta}.
\end{equation}
Finally, we combine the inequalities~\eqref{eq:minimum_proof_1} and~\eqref{eq:minimum_proof_2}, to obtain, for all $\eta \in \Pcal(\Theta)$,
\[
F_{1-1/\beta}([\rho^\star]^\theta) \leq F(\eta) + \frac{C_1 + d \log(\beta)}{\beta},
\]
where $C_1$ is a constant equal to $C(1) + K + d/2 - d\log(2\pi e)$. Taking the infimum over $\eta$ yields the desired result.
\end{proof}

\section{The case of quadratic loss}
\label{app:quadratic}
In this section, we illustrate the assumptions in the quadratic loss case.

Let $R$ be given by $R(\psi) = \frac{1}{2} \mathbb E_{(x, y) \sim D} (\psi(x) - y)^2 = \frac{1}{2}\|\psi - y\|_\Fcal^2$, where $(x, y)$ are the input feature and labels, respectively, and $D$ is the joint data distribution. The functional $F$ is the sum $F(\mu) = F_0(\mu) + \braket{g}{\mu}$, where for $\mu \in \Mcal(\Theta)$,
\begin{align}
F_0(\mu)
&= R(\braket{\Psi}{\mu}) \notag\\
&= \frac{1}{2}\|\braket{\Psi}{\mu} - y\|_\Fcal^2 \notag\\
&= \frac{1}{2}\|\braket{\Psi}{\mu}\|^2_\Fcal - \braket{\braket{\Psi}{\mu}}{y}_\Fcal + \frac{1}{2}\|y\|_\Fcal^2. \label{eq:F_quadratic}
\end{align}
The first term in~\eqref{eq:F_quadratic} can be written as
\begin{align*}
\frac{1}{2}\|\braket{\Psi}{\mu}\|^2_\Fcal
= \frac{1}{2}\Exp_{x} \left(\int_\Theta \Psi(\theta)(x) \dmu(\theta)\right)^2 
= \frac{1}{2}\iint_\Theta \Exp_{x} [\Psi(\theta)(x)\Psi(\tilde \theta)(x)] \dmu(\theta)\dmu(\tilde\theta) 
= \frac{1}{2}U[\mu, \mu],
\end{align*}
where $U(\theta, \tilde \theta) := \Exp_x [\Psi(\theta)(x)\Psi(\tilde \theta)(x)]$, 
and the symbol $U[\mu, \nu]$ denotes the double integral $\iint U(\theta, \tilde \theta)\dmu(\theta)\differential\nu(\tilde \theta)$.
The second term in~\eqref{eq:F_quadratic} can be written as
\[
- \braket{\braket{\Psi}{\mu}}{y}_\Fcal = - \Exp_{(x, y)} \left[y \int_\Theta \Psi(\theta)(x) \dmu(\theta)\right] = \braket{V}{\mu},
\]
where $V(\theta) := -\Exp_{x, y}[y\Psi(\theta)(x)]$. The last term in~\eqref{eq:F_quadratic} is a constant independent of $\mu$. To summarize, the functional $F_0$ can be written as
\begin{equation}
\label{eq:F_quadratic_unreg}
F_0(\mu) = \frac{1}{2}U[\mu, \mu] + \braket{V}{\mu} + \frac{1}{2}\|y\|_\Fcal^2.
\end{equation}
We now discuss our assumptions in this quadratic case. In particular, we show that the assumptions made in~\citen{{mei2018mean}} (for the first-order gradient flow) imply our assumption \ref{A4}.

First, it is assumed in~\citen{{mei2018mean}} that a quadratic regularizer is used, $g(\theta) = |\theta|^2/2$, which is confining since $\lim_{|\theta| \to \infty} g(\theta) = \infty$ and $\exp(-\beta g)$ is integrable for all $\beta > 0$. This proves the second part of assumption~\ref{A4}. They also make the following assumptions on $U, V$.
\begin{enumerate}[label=(B\arabic*)]
\item \label{B:bounded} $U$ and $V$ are uniformly bounded i.e., there exists $C_1,C_2>0$ such that
\begin{align*} 
    \|U(\cdot,\cdot)\|_\infty \leq C_1, \quad  \|V(\cdot)\|_\infty \leq C_2.
\end{align*}
\item \label{B:boundedgradient} $U$ and $V$ are differentiable, and have bounded gradients, i.e., there exist $C_3, C_4$ such that
\begin{align*} 
    \lVert \nabla U(\cdot,\tilde{\theta}) \rVert_{\infty}\leq C_4 \text{ for all}\: \tilde \theta, \quad \lVert \nabla V(\cdot) \rVert_{\infty} \leq C_3.
\end{align*}
\end{enumerate}
To prove that~\ref{A4} is satisfied, we need to show that the family $\{F_0'(\rho), \rho \in B\}$ is uniformly equicontinuous and uniformly bounded, where $B = \{\rho \in L^1(\Theta), \|\rho\|_1 \leq 1\}$. From~\eqref{eq:F_quadratic_unreg}, the Fr\'echet differential of $F_0$ is given by
\[
F_0'(\rho)(\theta) = U[\rho](\theta) + V(\theta),
\]
where the symbol $U[\rho]$ denotes the function $U[\rho](\theta) = \int U(\theta, \tilde \theta) \rho(\tilde\theta) \dtheta$. Then,
\begin{itemize}
\item By \ref{B:bounded}, $U[\rho] + V$ is bounded, uniformly in $\rho \in B$.
\item From \ref{B:boundedgradient}, it also follows that for $U[\rho] + V$ is Lipschitz continuous, with a Lipschitz constant independent of the choice of $\rho \in B$ and thus the family $\left\{ U[\rho] + V, \rho \in B \right\}$ is uniformly equicontinuous.
\end{itemize}

Thus assumption \ref{A4} is satisfied.

Note that in~\citen{{mei2018mean}}, the regularization term $g(\theta) = \braket{|\theta|^2/2}{\mu}$, together with the boundedness assumptions~\ref{B:bounded}-\ref{B:boundedgradient}, are crucial to guarantee integrability of $\exp(-\beta F'(\mu))$, so that the Boltzmann distribution~\eqref{eq:boltzmann} is well-defined. In the linear case described in Section~\ref{sec:linear}, it is also common to assume that the potential (which in this case is the same as our regularizer $g$) is confining, see for example~\citen{[Definition 4.2]{pavliotis2014stochastic}}.

Assumption \ref{A4} generalizes the conditions on $F_0$ from the quadratic setting to the convex setting, and replaces the quadratic regularizer with a more general confining regularizer.


\end{document}